\DeclareSymbolFont{cyrletters}{OT2}{wncyr}{m}{n}
\DeclareMathSymbol{\Sha}{\mathalpha}{cyrletters}{"58}
\newcommand{\ppmod}{\hspace{-2mm}\pmod}
\newcommand{\bC}{{\mathbb{C}}}
\newcommand{\bF}{{\mathbb{F}}}
\newcommand{\bN}{{\mathbb{N}}}
\newcommand{\bQ}{{\mathbb{Q}}}
\newcommand{\bR}{{\mathbb{R}}}
\newcommand{\bZ}{{\mathbb{Z}}}
  \newcommand{\A}{{\mathcal{A}}}
  \newcommand{\C}{{\mathcal{C}}}
  \newcommand{\J}{{\mathcal{J}}}
\renewcommand{\L}{{\mathcal{L}}}
\renewcommand{\P}{{\mathcal{P}}}
  \newcommand{\R}{{\mathcal{R}}}
\renewcommand{\S}{{\mathcal{S}}}
\newcommand{\AND}{\text{ and }}
\newcommand{\OR}{\text{ or }}
\newcommand{\fr}{\mathfrak{r}}
\newcommand{\fs}{\mathfrak{s}}
\newcommand{\Gal}{\operatorname{Gal}}
\newcommand{\GL}{\operatorname{GL}}
\newcommand{\ep}{\varepsilon}
\newcommand{\ol}{\overline}
\newcommand{\upchi}{{\raise.35ex\hbox{$\chi$}}}
\newcommand{\Vol}{\operatorname{Vol}}
\newcommand{\Area}{\operatorname{Area}}
\newcommand{\mmax}{\mathrm{\tiny max}}
\newtheorem{theorem}{Theorem}[section]
\newtheorem{corollary}[theorem]{Corollary}
\newtheorem{proposition}[theorem]{Proposition}
\newtheorem{lemma}[theorem]{Lemma}
\theoremstyle{definition}
\newtheorem{definition}[theorem]{Definition}
\newtheorem{example}[theorem]{Example}
\theoremstyle{remark}
\newtheorem{remark}[theorem]{Remark}
\numberwithin{equation}{section}
\begin{document}

\title[The number of quartic $D_4$-fields with monogenic cubic resolvent]{The number of quartic $D_4$-fields with monogenic\\ cubic resolvent ordered by conductor}

\author{Cindy (Sin Yi) Tsang}
\address{School of Mathematics (Zhuhai)\\ Sun Yat-Sen University \\
Tangjiawan, Zhuhai\\
Guangdong, 519082, China}
\email{zengshy26@mail.sysu.edu.cn}

\author{Stanley Yao Xiao}
\address{Department of Mathematics\\
University of Toronto \\
Bahen Centre, 40 St. George Steet, Room 6290 \\
Toronto, Ontario, Canada M5S 2E4}
\email{syxiao@math.toronto.edu}
\indent


\begin{abstract} In this paper, we consider maximal and irreducible quartic orders which arise from integral binary quartic forms, via the construction of Birch and Merriman, and whose field of fractions is a quartic $D_4$-field. By a theorem of Wood, such quartic orders may be regarded as quartic $D_4$-fields whose ring of integers has a monogenic cubic resolvent. We shall determine the asymptotic number of such objects when ordered by conductor. We shall also give a lower bound, which we suspect has the correct order of magnitude, and a slightly larger upper bound for the number of such objects when ordered by discriminant. A simplified version of the techniques used allows us to give a count for those elliptic curves with a marked rational 2-torsion point when ordered by discriminant.
\end{abstract}

\maketitle

\tableofcontents


\section{Introduction}
\label{Intro}

In his groundbreaking work \cite{HCL3}, Bhargava showed that isomorphism classes of pairs
\[\label{QR}(Q,R),\mbox{ where $Q$ is a quartic ring and $R$ is a cubic resolvent of $Q$},\]
may be parametrized by $\GL_2(\bZ)\times\GL_3(\bZ)$-orbits of pairs $(U,V)$ of integral ternary quadratic forms, where the action is defined as in \cite[(11)]{HCL3}. He also showed that maximal quartic orders have a unique cubic resolvent; see \cite[Corollary 5]{HCL3}. Hence, when restricted to irreducible and maximal quartic orders, this may be regarded as a parametrization of quartic fields. \\

It is known, by work of Birch-Merriman \cite{BM} and Nakagawa \cite{Nakagawa}, that integral binary forms of degree $d$ may be used to construct $d$-ic orders. More specifically, consider
\[ F(x,y) = a_dx^d + a_{d-1}x^{d-1}y + \cdots + a_1xy^{d-1} + a_0y^d,\mbox{ where }a_i\in\bZ,\]
and assume for simplicity that $F$ is irreducible. Let $\theta_F$ be a root of $F(x,1)$. Then, the $d$-ic order $Q_F$ associated to $F$ is the subring of $L_F = \bQ(\theta_F)$ with $\bZ$-module basis given by
\[\zeta_0 = 1,\,\ \zeta_1 = a_d\theta_F,\,\ \zeta_2 = a_d\theta_F^2+a_{d-1}\theta_F, \,\ \cdots\,\ \zeta_{d-1} = a_d\theta_F^{d-1}+\cdots+a_{2}\theta_F.\]
By \cite{Nakagawa}, it is known that $Q_F$ is indeed a unital ring, and we have $\Delta(F) = \mbox{disc}(Q_F)$. Further, the isomorphism class of $Q_F$ corresponds to the $\GL_2(\bZ)$-equivalence class of $F$. For $d=3$, by work of Delone-Faddeev \cite{DF}, all cubic orders arise in this way. For $d \geq 4$, it is expected that most $d$-ic orders do not arise this way. Nevertheless, for a long time, this construction was essentially the only one to obtain irreducible orders of high rank over $\bZ$. \\

In \cite{Wood}, Wood further showed that integral binary quartic forms 
\begin{equation}\label{generic F} F(x,y) = a_4x^4+a_3x^3y+a_2x^2y^2+a_1xy^3+a_0y^4,\mbox{ where }a_i\in\bZ,\end{equation}
correspond precisely to the pairs
\[(Q,R),\mbox{ where $Q$ is a quartic ring and $R$ is a monogenic cubic resolvent of $Q$}.\]
Let us write ternary quadratic forms as matrices, via
\[ u_{11}x^2 + u_{22}y^2 + u_{33}z^2 + u_{12}xy + u_{13}xz + u_{23}yz \longleftrightarrow \begin{pmatrix} u_{11} & \frac{u_{12}}{2} & \frac{u_{13}}{2} \\ \frac{u_{12}}{2} & u_{22} & \frac{u_{23}}{2} \\ \frac{u_{13}}{2} & \frac{u_{23}}{2} & u_{33}\end{pmatrix}.\]
Then, the above correspondence may be made explicit by sending $F$ to the pair $(Q_F,R_F)$ which is associated to the ternary quadratic forms
\[ \left(U_0,V_F\right) = \left(\begin{pmatrix} 0 & \frac{-1}{2} & 0 \\ \frac{-1}{2} & 0 & 0\\  0 & 0 & 1
\end{pmatrix},\begin{pmatrix} a_0 & 0 & \frac{a_1}{2} \\ 0 & a_4 & \frac{a_3}{2} \\ \frac{a_1}{2} & \frac{a_3}{2} & a_2
\end{pmatrix}\right).\]
Now, the ring of integers of any quartic field is a maximal quartic order and hence has a unique cubic resolvent. In view of these observations, we are prompted to make the following definition:

\begin{definition} \label{qmono} A quartic field is said to \emph{have monogenic cubic resolvent} if the cubic resolvent of its ring of integers is monogenic.
\end{definition}

In particular, we have a natural bijection
\begin{equation}\label{correspondence}\frac{\left\{\begin{array}{c}\mbox{integral and irreducible binary}\\\mbox{quartic forms $F$ with $Q_F$ maximal}\end{array} \right\}}{\GL_2(\bZ)\mbox{-equivalence}}\longleftrightarrow
\frac{\left\{\begin{array}{c}\mbox{quartic fields $L$ with}\\\mbox{monogenic cubic resolvent}\end{array}\right\}}{\mbox{field isomorphism}}.\end{equation}
We shall write $\Gal(F)$ for the Galois group of the Galois closure of $L_F = \bQ(\theta_F)$ over $\bQ$, and for any group $G$, we say that $L$ is a \emph{$G$-field} if its Galois closure over $\bQ$ has Galois group isomorphic to $G$. The main purpose of this paper is to enumerate the objects above when $\Gal(F)\simeq D_4$, or equivalently, when $L$ is a quartic $D_4$-field. Here $D_4$ denotes the dihedral group of order eight.\\

Recall that by the Delone-Faddeev correspondence \cite{DF}, isomorphism classes of cubic orders are in bijection with $\GL_2(\bZ)$-equivalence classes of integral binary cubic forms. A cubic order is monogenic if and only if the corresponding $\GL_2(\bZ)$-equivalence class contains a \emph{monic} cubic form, that is, a cubic form which represents $1$ integrally. For $d \geq 2$, we expect that most $\GL_2(\bZ)$-equivalence classes of integral binary forms of degree $d$, when ordered by discriminant, do not represent $1$ integrally. Thus, one expects that the objects considered in Definition \ref{qmono} are rare among all quartic fields.

\begin{example}Consider the pair $(Q,R)$, where $Q$ is a quartic ring and $R$ is a cubic resolvent of $Q$, corresponding to the pair 
\[(U,V) = \left(\begin{pmatrix} 0 & 0 & 0 \\ 0 & 1 & \frac{-1}{2} \\ 0 & \frac{-1}{2} & -1 \end{pmatrix}, \begin{pmatrix} 1 & 0 & \frac{1}{2} \\ 0 & -1 & \frac{-1}{2} \\ \frac{1}{2} & \frac{-1}{2} & 4 \end{pmatrix} \right).\]
The associated cubic resolvent is given by 
\[ R(x,y) = 4\det(Ux + Vy) = -y(5x^2 - 17xy + 16y^2),\]
which does not represent $1$ integrally because there is no $x\in\bZ$ for which $5x^2 - 17x + 16 = \pm 1$. This implies that $R$ is not monogenic. Using the criteria given by Bhargava \cite{HCL3} (see Proposition \ref{maximality types} below), it may be verified that $Q$ is a maximal quartic ring; note that $\mathrm{disc}(Q)$, which equals $\mathrm{disc}(R(x,y))$, is $-31\cdot 5^2$ and so it suffices to check maximality at the primes $5$ and $31$. \\

Let us remark (mostly due to the lack of a reference) that, when $Q$ is irreducible, the field of definition of the intersection points of the two ternary quadratic forms $U(x,y,z)$ and $V(x,y,z)$ is exactly the Galois closure of the field of fractions of $Q$. This can be seen easily by first taking a subring $Q^\prime$ with monogenic cubic resolvent of $Q$ so that $Q^\prime$ corresponds to a binary quartic form. \\

We now compute the intersection points over $\bC$ of the conics defined by
\[ U(x,y,z) =  y^2 - yz - z^2\AND V(x,y,z) = x^2 + xz - y^2 - yz + 4z^2.\]
The set of points on the conic defined by $U(x,y,z)$ is given by
\[\left\{\left(x, \frac{1 + \sqrt{5}}{2} z, z \right) : x, z \in \bC \right\} \cup \left\{\left(x, \frac{1 - \sqrt{5}}{2} z, z \right) : x,z \in \bC \right\}. \]
Substituting the relation $y = (1 \pm \sqrt{5}) z/2$ into $V(x,y,z)$ gives
\[ x^2 + xz - \left(\frac{1 \pm \sqrt{5}}{2}\right)^2 z^2 - \left(\frac{1 \pm \sqrt{5}}{2}\right) z^2 + 4z^2 = x^2 + xz + (2 \mp \sqrt{5}) z^2.\]
Solving for all cases we get the representatives 
\[\left(\frac{-1 + \sqrt{-7 + 4\sqrt{5}}}{2}  , \frac{1 + \sqrt{5}}{2} ,1\right), \left(\frac{-1 - \sqrt{-7 + 4\sqrt{5}}}{2}  , \frac{1 + \sqrt{5}}{2} ,1\right), \]
\[\left(\frac{-1 + \sqrt{-7 - 4\sqrt{5}}}{2}  , \frac{1 - \sqrt{5}}{2} ,1\right), \left(\frac{-1 - \sqrt{-7 - 4\sqrt{5}}}{2}  , \frac{1 - \sqrt{5}}{2} ,1\right). \]
Since $-7 - 4 \sqrt{5} < 0$, it cannot be a square in $\bQ(\sqrt{5})$, hence the extension $\bQ\left(\sqrt{-7 - 4 \sqrt{5}}\right)$ has degree $4$ over $\bQ$. It is standard to check that its Galois closure has Galois group $D_4$. This implies that the pair $(U,V)$ corresponds to a quartic $D_4$-field whose cubic resolvent ring is not monogenic.
\end{example}

In \cite{ASVW}, Altug-Shankar-Varma-Wilson counted all quartic $D_4$-fields $L$ by their \emph{conductor}, which as in \cite[Proposition 2.4]{ASVW} may be defined as
\[ \label{ASVW cond} \C(L) = \mbox{disc}(L)/\mbox{disc}(K_L),\]
where $K_L$ denotes the unique quadratic subfield of $L$. They showed that the number of isomorphism classes of quartic $D_4$-fields $L$ with $|\C(L)| < X$ is asymptotic to $\kappa \cdot X \log X$, where $\kappa$ is an explicit constant given by an Euler product. We note that the above definition of conductor applies equally well to quartic orders with a unique quadratic subalgebra, and we shall give the corresponding definition $\C(F)$ for integral binary quartic forms $F$ in Subsection~\ref{conductor poly}.\\

Motivated by the work of \cite{ASVW}, let $N_{D_4}^{(r_2)}(X)$ be the number of isomorphism classes of quartic $D_4$-fields $L$ with monogenic cubic resolvent such that $L$ has exactly $4-2r_2$ real embeddings and $|\C(L)|<X$. By (\ref{correspondence}), equivalently $N_{D_4}^{(r_2)}(X)$ is the number of $\GL_2(\bZ)$-equivalence classes of integral and irreducible binary quartic forms $F$ with $Q_F$ maximal and $\Gal(F)\simeq D_4$ such that $F(x,1)$ has exactly $4-2r_2$ real roots and $|\C(F)|<X$. Our main theorem is the following:

\begin{theorem} \label{real split MT}We have
\[N_{D_4}^{(r_2)}(X)  =\frac{\fr(r_2)}{\zeta(2)}\frac{\sqrt{2}\Gamma(1/4)^2}{48 \sqrt{\pi}} \prod_p \left(1 - \frac{2p-1}{p^3}\right) X^{3/4} \log X + O \left(X^{3/4} (\log X)^{3/4} \right),\]
where $\fr(0) = 1$, $\fr(1)=\sqrt{2}$, and $\fr(2) = 1+\sqrt{2}$.
\end{theorem}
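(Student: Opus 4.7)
The plan is to work through the bijection (\ref{correspondence}) and recast $N_{D_4}^{(r_2)}(X)$ as the count of $\GL_2(\bZ)$-orbits on integral irreducible binary quartic forms $F$ with $Q_F$ maximal, $\Gal(F)\simeq D_4$, signature matching $r_2$, and $|\C(F)|<X$. The structural input is that every $D_4$-form $F$ factors over the unique quadratic subfield $K=K_F\subseteq L_F$ as $F=G\cdot\sigma(G)$ for some binary quadratic form $G$ with coefficients in $\O_K$, where $\sigma$ generates $\Gal(K/\bQ)$. The $\GL_2(\bZ)$-action on $F$ corresponds to the diagonal action by $\GL_2(\bZ)\hookrightarrow\GL_2(\O_K)$ on the factor $G$, and the factorization is unique up to the swap $G\leftrightarrow\sigma(G)$. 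Hence orbits of $F$ correspond bijectively (up to stabilizer bookkeeping) to pairs $(K,[G])$.

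Next, I would invoke the conductor formula to be derived in Section~\ref{conductor section} to realize $|\C(F)|$ as proportional to the absolute norm $|\operatorname{Nm}_{K/\bQ}(\operatorname{disc}(G))|$, where $\operatorname{disc}(G)=B^2-4AC\in\O_K$ for $G=Ax^2+Bxy+Cy^2$, up to bounded local factors at primes ramifying in $K$. The count $N_{D_4}^{(r_2)}(X)$ then breaks up as a sum, over fundamental discriminants $D=\operatorname{disc}(K)$ of the appropriate archimedean signature, of counts $N_D(X)$ of $\GL_2(\bZ)$-equivalence classes of $\O_K$-binary quadratic forms $G$ satisfying $|\operatorname{Nm}_{K/\bQ}(\operatorname{disc}(G))|\leq cX/|D|$, together with local conditions that enforce maximality of $Q_F$ and the Galois type $D_4$.

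For each fixed $K$, I would estimate $N_D(X)$ by the volume of a fundamental domain for the relevant discrete action on the $6$-dimensional real space of binary quadratic forms over $\O_K\otimes\bR$, intersected with the norm-discriminant region and the signature constraint. The archimedean integral, upon changing variables to $t=\operatorname{disc}(G)$, yields an integral of the shape $\int_0^{\infty}dt/\sqrt{t^4+O(1)}$ contributing the factor $\Gamma(1/4)^2/\sqrt{\pi}$, while the signature constant $\fr(r_2)$ arises combinatorially from counting real-embedding patterns of $K$ and of $G$ that produce the prescribed $r_2$. Summing the asymptotic for $N_D(X)$ over fundamental discriminants $D$ via a Dirichlet-series/Tauberian argument produces the leading $X^{3/4}\log X$. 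Imposing the maximality of $Q_F$ at each prime $p$, via a standard Ekedahl/Bhargava-type sieve using the local density $1-(2p-1)/p^3$ on the space of pairs $(U_0,V)$, introduces the Euler product in the main term; the factor $1/\zeta(2)$ enters through the squarefreeness condition on the fundamental discriminants in the sum.

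The main obstacle, in my view, is obtaining a uniform power-saving error for $N_D(X)$ as $D$ ranges up to $|D|\asymp X$, so that the global error stays within $O(X^{3/4}(\log X)^{3/4})$. The fundamental domain for the $\GL_2(\O_K)$-action on $\O_K$-binary quadratic forms has cusps controlled by the regulator of $\O_K$ when $K$ is real quadratic, and careful cusp-by-cusp analysis is required to keep the error small after summation over $K$. A secondary, more delicate, point is the precise accounting of stabilizers: the Galois swap $G\leftrightarrow\sigma(G)$ contributes a generic factor of $\tfrac12$, but additional symmetries at special $G$, together with the non-trivial interaction of $\O_K^\times$ with $\GL_2(\bZ)$-equivalence of the binary quartic, must be tracked carefully to land on the precise leading constant in the theorem.
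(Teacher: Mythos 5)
There is a genuine gap here, and it sits at the heart of the argument rather than in the technical issues you flag at the end. First, the structural input is not available in the form you state: a $D_4$-form $F=a_4x^4+\cdots$ factors over its quadratic subfield $K$ only as $a_4\cdot(\text{monic})\cdot\sigma(\text{monic})$, and rewriting this as $G\cdot\sigma(G)$ with $G\in\O_K[x,y]$ requires $a_4$ to be a norm from $K$, which fails in general. The paper's substitute for this factorization is Theorem~\ref{h(f,g)}: $F=h(f,g)$ with $h,f,g$ integral binary quadratic forms and $(f,g)$ primitive, and $K=\bQ(\sqrt{\Delta(h)})$. More importantly, the decisive step you are missing is Theorem~\ref{necessary thm'}/(\ref{necessary}): maximality of $Q_F$ forces $\Delta(\J_{(f,g)})=\pm4$, hence $F$ lies in one of exactly \emph{three} explicit three-parameter families $V_{\J^{(i)}}(\bZ)$, $i=1,2,3$, with coordinates $(A,B,C)$ and with maximality expressed by explicit congruences (Theorems~\ref{J1 thm}--\ref{J3 thm}). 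Without this reduction, the set you propose to count has no usable coordinates. Concretely, your space of pairs $(K,G)$ is six-dimensional over $\bZ$ for each $K$, and the total count of such pairs with $|D|\cdot|\operatorname{Nm}(\operatorname{disc}G)|<X$ is of order $X$ (essentially all $D_4$-fields by conductor), whereas the answer is $X^{3/4}\log X$. So the maximality/monogenicity condition is \emph{not} a positive-density local condition on your space, and the proposed Ekedahl/Bhargava-type sieve with density $1-(2p-1)/p^3$ cannot be run there; that Euler product is the density of maximality \emph{inside} the three families, computed from the congruence criteria of Section~\ref{max section}.

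The quantitative claims for fixed $K$ are also off in a way that matters. For fixed fundamental discriminant $D=\Delta(h)$, the relevant count is of $(A,B,C)$ with $B^2-4AC=D$ and $|16AC\cdot D|<X$; since $|4AC|=|B^2-D|$, this is governed by a divisor sum over $B$ and is of order $(X/|D|)^{1/2}\log X$, not $c_D(X/|D|)^{3/4}$. Consequently the $\log X$ in the main term does not come from a Tauberian summation over $D$, and the $1/\zeta(2)$ does not come from squarefreeness of fundamental discriminants: in the paper both arise from $\sum_{a\le X^{1/4},\ a\ \mathrm{square\text{-}free}}1/a\sim\frac{1}{4\zeta(2)}\log X$, where $a$ is the leading coefficient of $h$ and its squarefreeness is itself forced by maximality (Theorem~\ref{J1 thm}(a)). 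The archimedean constant $\Gamma(1/4)^2/\sqrt{\pi}$ and the signature weights $\fs(r_2)$ do come from an elliptic integral as you anticipate (Lemmas~\ref{preliminary int} and~\ref{Area of R}), but over the two-dimensional regions $\R^{(r_2)}_a(X)$ in the $(b,c)$-plane for fixed $a$, with the lattice-point count done by Davenport's lemma, not over a fundamental domain for $\GL_2(\O_K)$ with cusp analysis. In short, the proposal replaces the paper's essential finiteness theorem with a parametrization on which the remaining conditions are intractable, so the argument as outlined would not close.
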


\begin{remark}The quality of the error term in Theorem~\ref{real split MT} is merely a log-power saving, and this is mostly an artifact of our proof.  \end{remark}

\begin{remark}\label{product P}The real number in Theorem \ref{real split MT} given by the absolutely convergent product
\[\P=\prod_{p}\left(1 - \frac{2p-1}{p^3}\right)\]
is the so-called \emph{carefree constant}. It arises from counting \emph{carefree couples}, namely pairs $(a,b)$ of coprime positive integers with $a$ square-free. In particular, the number of carefree couples $(a,b)$ with $a,b\leq X$ is equal to $\P X^2 + O(X\log X)$.\end{remark}

\begin{remark} Although Theorem \ref{real split MT} is given in terms of real densities, for any finite set of primes $\mathfrak{P}$ we can impose any number of congruence conditions modulo $p^k$ for $p \in \mathfrak{P}$ and the asymptotic formula in the theorem will still be valid (even if the main term becomes zero). This is because our square-free sieve provides the necessary tail estimate. In particular, we can count the number of such quartic fields having specified splitting behaviour at any finite number of primes. 
\end{remark}

Comparing Theorem \ref{real split MT} with the main theorem of \cite{ASVW} we see two features. Firstly, Theorem \ref{real split MT} provides an asymptotic formula for a subfamily which is \emph{zero-density}. In general, one expects that the distribution behavior in such thin families to be different from a `large' family (having positive density). Secondly, our Theorem \ref{real split MT} preserves an essential feature of the main theorem in \cite{ASVW}: the main term can be given in terms of a natural Euler product. \\

It is interesting to note that when sorted by the number of real embeddings, the distribution of all quartic $D_4$-fields and that of quartic $D_4$-fields with monogenic cubic resolvent do not agree. As can be seen from \cite{ASVW}, the proportions of all quartic $D_4$-fields having $4,2,$ and $0$ real embeddings ordered by conductor are $1/3$, $1/2$, and $1/6$, respectively. However, by Theorem~\ref{real split MT} above, the proportions of quartic $D_4$-fields with monogenic cubic resolvent having $4,2,$ and $0$ real embeddings are $20.710 \dots \%,29.289 \dots \%$, and $1/2$, respectively. Thus, Theorem \ref{real split MT} can be viewed as a demonstration of the phenomenon that \emph{global restrictions} on sets of rings and fields change the Cohen-Lenstra heuristics in ways that local restrictions do not. Indeed, we expect that the distribution of monogenic rings to be different than the distribution of generic rings of fixed rank. \\

It is natural to ask what happens when the quartic fields with monogenic cubic resolvent are ordered by discriminant rather than conductor. If we count quartic $V_4$-fields, where $V_4$ denotes the Klein group, then we can prove an exact asymptotic formula using a result of Stewart and the second author \cite{SX}. However, if we count quartic $D_4$-fields, then we are only able to get a lower bound, which we suspect has the correct order of magnitude, and a slightly larger upper bound. For $G\in\{V_4,D_4\}$, let $N_G'(X)$ be the number of isomorphism classes of quartic $G$-fields $L$ with monogenic cubic resolvent such that $|\mbox{disc}(L)|<X$. Again by (\ref{correspondence}), equivalently $N_G'(X)$ is the number of $\GL_2(\bZ)$-equivalence classes of integral and irreducible binary quartic forms $F$ with $Q_F$ maximal and $\Gal(F)\simeq G$ such that $|\Delta(F)|<X$. Then, we have the following theorems:

\begin{theorem} \label{V4 thm} We have
\[N_{V_4}'(X) = \frac{7}{8}\frac{\Gamma(1/3)^2}{\Gamma(2/3)} \prod_p \left(1 - \frac{4p-3}{p^3}\right) X^{1/3} + O \left(X^{1/3} (\log X \cdot \log \log X)^{-1} \right).\]
\end{theorem}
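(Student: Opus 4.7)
The plan is to exploit the complete splitting of the resolvent cubic, which characterizes the $V_4$ Galois condition, and thereby reduce the count to a weighted lattice-point problem in two variables.

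First I would establish a normal form. For $\Gal(F) \cong V_4$, the resolvent cubic of $F$ must factor into three rational linear factors over $\bQ$. Via the Bhargava--Wood correspondence (\ref{Wood pair}), this splitting permits, after combining the $\GL_2(\bZ)$-action with a suitable $\GL_3(\bZ)$-action stabilizing $U_0$, the reduction of every class of $V_4$-forms with $Q_F$ maximal to a representative of the biquadratic shape
\[
F(x,y) = Ax^4 + Cx^2 y^2 + Ey^4, \qquad AE = k^2 \text{ for some } k \in \bZ_{\geq 1}.
\]
The constraint $AE = k^2$ ensures the resolvent cubic $(z - C)(z^2 - 4AE) = (z-C)(z-2k)(z+2k)$ splits over $\bQ$. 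I would carefully track the residual stabilizer of this biquadratic family, which contains the sign-change group $\{(x,y) \mapsto (\pm x, \pm y)\}$ and the involution $(x,y) \mapsto (y,x)$ combined with the swap $(A,E) \leftrightarrow (E,A)$.

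Second, I would compute the relevant densities. The discriminant of such a biquadratic form evaluates to $\Delta(F) = 16 k^2 (C^2 - 4k^2)^2$. The maximality of $Q_F$ translates into a collection of local conditions on $(A,C,E)$, and a prime-by-prime analysis, in parallel to the treatment of the $D_4$ case in Section~\ref{counting main} but specialized to the biquadratic subfamily, yields a local density of $1 - (4p-3)/p^3$ at each prime $p$. This produces the global factor $\prod_p(1 - (4p-3)/p^3)$ in the main term.

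Third, I would execute the count. After the reductions, the problem reduces to counting, with appropriate weights, pairs $(k, C) \in \bZ_{\geq 1} \times \bZ$ subject to $16 k^2 (C^2 - 4k^2)^2 \leq X$. Substituting $n = C - 2k$ (and handling the other sign similarly), the constraint becomes the homogeneous cubic inequality $k \cdot |n| \cdot (4k + n) \leq X^{1/2}/4$. The archimedean area of the corresponding region in the positive quadrant is, via a standard computation, a constant multiple of $X^{1/3}$ involving the Beta integral $B(1/3,1/3) = \Gamma(1/3)^2/\Gamma(2/3)$. Combining the archimedean volume, the maximality density, the stabilizer and swap symmetry factors (together producing the coefficient $7/8$), and summing the contributions from the two regimes $n > 0$ and $n < 0$, then yields the stated main term.

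The main obstacle is the delicate interplay between the near-diagonal regime $|C| \approx 2k$ (where $k$ can be as large as $X^{1/4}$) and the generic regime $|C| \gg k$ (where $k$ is bounded by $X^{1/6}$). Both regimes contribute at scale $X^{1/3}$, and divisor-like factors from the multiplicity of factorizations $AE = k^2$ must be absorbed into the symmetry group to yield a clean $X^{1/3}$ main term without extraneous logarithmic factors. Obtaining the sharp error term $O(X^{1/3}(\log X \log\log X)^{-1})$ further requires a fundamental-lemma type sieve for the maximality condition, more refined than the one used for the $D_4$ case under conductor ordering.
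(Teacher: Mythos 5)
The central unresolved point in your sketch is the one you yourself flag as ``the main obstacle'': the multiplicity of factorizations $AE=k^2$. This cannot be ``absorbed into the symmetry group.'' Two biquadratic forms $Ax^4+Cx^2y^2+Ey^4$ and $A'x^4+Cx^2y^2+E'y^4$ with $AE=A'E'=k^2$ are in general not $\GL_2(\bZ)$-equivalent (the stabilizer of the biquadratic family only permits the swap $(A,E)\leftrightarrow(E,A)$ and sign changes), so a parametrization by $(k,C)$ either undercounts or, if one sums over all factorizations, introduces a genuine divisor function and an extraneous logarithm. The paper's resolution is structural and comes from the maximality criteria rather than from any symmetry: maximality of $Q_F$ at $p$ forces the outer coefficients to be $\not\equiv 0\pmod{p^2}$ (Theorem~\ref{J1 thm}), hence square-free, and together with $AE=\square$ this forces $|A|=|E|$; Lemmas~\ref{square prop} and~\ref{V4 forms} and Proposition~\ref{representative cor'} then pin each class down to a \emph{unique} representative $ax^4+bx^2y^2+ay^4$. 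With that normal form the count is exactly the number of $(a,b)\in\bZ^2$ with $0<|4a(b-2a)(b+2a)|<X^{1/2}$ satisfying the local maximality conditions, and the theorem follows from the Stewart--Xiao result on square-free values of completely reducible binary cubic forms (Proposition~\ref{SX prop}); that result is also what supplies the error term $O(X^{1/3}(\log X\log\log X)^{-1})$, so no new ``fundamental-lemma type sieve'' is needed. Without the step forcing $|A|=|E|$, your count is not a count of $\GL_2(\bZ)$-classes and the argument does not close.

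A second, smaller error concerns the constant. The local density is \emph{not} $1-(4p-3)/p^3$ at every prime: at $p=2$ the maximality condition gives $\rho_0(2)=9$, i.e.\ density $7/16$ rather than $3/8$, and the leading coefficient $7/8$ arises as the archimedean constant $3/4$ multiplied by the $2$-adic correction $(7/16)/(3/8)=7/6$. Attributing the $7/8$ to ``stabilizer and swap symmetry factors'' misplaces its origin, and carrying out your computation with a uniform Euler factor would not recover the stated constant.
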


\begin{theorem} \label{D4 thm} We have
\[X^{1/2} (\log X)^2 \ll N_{D_4}'(X) \ll X^{1/2 + 1/\log\log X} \log X.\]
\end{theorem}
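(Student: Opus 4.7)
The plan combines Wood's correspondence (\ref{correspondence}) with the factorization of the cubic resolvent. For a $D_4$-field $L$ with monogenic cubic resolvent $R_L=\bZ[\theta]$, the minimal polynomial of $\theta$ factors over $\bZ$ as $g(x)=(x-r)(x^2+sx+t)$, with $x^2+sx+t$ irreducible defining $K=K_L$. Using Bhargava's identity $\operatorname{disc}(Q_F)=\operatorname{disc}(R_F)$ together with the resultant formula for the discriminant of a product,
\[
|\operatorname{disc}(L)| = (s^2-4t)(r^2+sr+t)^2.
\]
After normalizing the translation equivalence $(r,s,t)\mapsto(r+n,s-2n,n^2-sn+t)$ so that $s\in\{0,1\}$, a direct integer-point count in the resulting hyperbolic region yields
\[
\#\{(r,s,t):|\operatorname{disc}(L)|\leq X,\ x^2+sx+t \text{ irreducible}\} \asymp X^{1/2}\log X,
\]
which counts the monogenic cubic orders arising as $R_L$ for some $D_4$-field $L$ with $|\operatorname{disc}(L)|\leq X$.

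For each such monogenic cubic order $R$ with $R\otimes\bQ=\bQ\oplus K$, a $D_4$-field $L$ with $R_L=R$ is of the form $L=K(\sqrt{\alpha})$ for some $\alpha\in K^\times$ with $N_{K/\bQ}(\alpha)\notin(\bQ^\times)^2$; by class field theory, the number of such $\alpha$ (modulo squares in $K^\times$) giving rise to $R_L=R$ is controlled by the $2$-torsion $|\mathrm{Cl}(K)[2]|$ or a closely related ray class group. For the upper bound, I would use the worst-case estimate $|\mathrm{Cl}(K)[2]|\ll D_K^{O(1/\log\log D_K)}$ uniformly in $K$ (where $D_K=|\operatorname{disc}(K)|$), giving
\[
N_{D_4}'(X) \ll X^{1/2}\log X \cdot X^{O(1/\log\log X)} = X^{1/2+1/\log\log X}\log X.
\]
For the lower bound, I would use the average estimate $|\mathrm{Cl}(K)[2]|=2^{\omega(D_K)-1}$ from genus theory, together with $\sum_{D_K\leq T}2^{\omega(D_K)}\asymp T\log T$, to obtain $N_{D_4}'(X)\gg X^{1/2}(\log X)^2$; the required density statement (a positive fraction of cubic orders yields valid $D_4$-fields with irreducible $F$ and maximal $Q_F$) follows from a standard sieve together with the density computations already needed for Theorem~\ref{real split MT}.

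The principal obstacle lies in the class-field-theoretic step: one must characterize precisely which Kummer elements $\alpha\in K^\times/K^{\times 2}$ give $R_L$ monogenic with the prescribed generator $\theta$, so that the bound by $|\mathrm{Cl}(K)[2]|$ is honest in both directions. The gap between the upper and lower bounds is then intrinsic, arising from the discrepancy between the average ($\asymp\log D_K$) and the worst-case ($\ll D_K^{O(1/\log\log D_K)}$) growth of $|\mathrm{Cl}(K)[2]|$, which is unavoidable without structural input beyond genus theory.
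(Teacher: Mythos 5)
Your route is genuinely different from the paper's, which never touches class field theory: there the upper bound is exactly Proposition~\ref{error V3}(b), a direct lattice-point count over the three families using the pointwise divisor bound $d(y)=O(y^{1/\log\log y})$, and the lower bound comes from the single family $ax^4+bx^2y^2+cy^4$ by imposing that $ac$ and $b^2-4ac$ be square-free (which by Theorem~\ref{J1 thm} forces maximality away from $2$) and applying Friedlander--Iwaniec to square-free values of $x^2-y$. Your count $\#\{(r,s,t)\}\asymp X^{1/2}\log X$ is correct and is essentially the same region $|y(x^2-y)^2|<X$ the paper integrates over, and your diagnosis of the upper/lower discrepancy as an average-versus-maximal-order phenomenon for a divisor-type quantity is accurate. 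But there are two genuine gaps. First, the quadratic factor $x^2+sx+t$ of the resolvent cubic cuts out $\bQ(\sqrt{\operatorname{disc}(L)})$, which for a $D_4$-quartic field is \emph{not} the quadratic subfield $K_L$ but a different quadratic subfield of the Galois closure; so the Kummer step ``$L=K(\sqrt{\alpha})$'' is attached to the wrong field, and fixing the cubic resolvent ring $R$ fixes neither $K_L$ nor the relative discriminant $\fd_{L/K_L}$, so the fibre over $R$ is not a set of quadratic extensions of one fixed quadratic field counted by one fixed (ray) class group.

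Second, and more seriously, the lower bound requires a \emph{lower} bound on the number of quartic fields $L$ with $\O_L$ maximal and $R_L$ isomorphic to a prescribed monogenic cubic ring; ``controlled by $|\Cl(K)[2]|$'' plus genus theory only gives an upper bound on the fibre, and the existence of $\gg 2^{\omega}$ actual $D_4$-fields over each resolvent would force you through the full parametrization of maximal quartic rings with fixed cubic resolvent by ideal-class data together with local solubility computations --- at which point you are redoing the paper's maximality criteria (Section~\ref{max section}) in different language. In the paper the extra $(\log X)^2$ is simply $\sum_y \mu(y)^2 d(y)/y$, where $d(y)$ counts the factorizations $y=4ac$, i.e.\ the distinct forms lying over a given point of your region; that is explicit and unconditional, whereas your version of it is asserted. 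The upper-bound half of your argument is sound in spirit, and indeed morally identical to the paper's, since $2^{\omega(n)}\ll n^{O(1/\log\log n)}$ is exactly the divisor bound being used; but as written the proposal does not close the lower bound.
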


Note that Theorem \ref{V4 thm} can be compared with a theorem of Baily in \cite{Bai}. He obtained the asymptotic formula for the number of quartic $V_4$-fields with discriminant bounded by $X$, obtaining a main term of the shape $\kappa^\prime \cdot X^{1/2} (\log X)^2$ for an explicit constant $\kappa^\prime$ given by an Euler product. We again see that Theorem \ref{V4 thm} gives an asymptotic formula for a thin family. \\ 

Similarly, one should compare Theorem \ref{D4 thm} with the main result of Cohen, Diaz y Diaz, and Olivier \cite{CDO}. They established an asymptotic formula for the number of quartic $D_4$-fields ordered by discriminant using $L$-function methods, and the main term is of the shape $\kappa^{\prime\prime}\cdot X$. What one should note from their work is that it is unlikely that the constant $\kappa^{\prime\prime}$ can be given in terms of an Euler product (i.e., a product of local densities), and perhaps one should expect the same for quartic $D_4$-fields with monogenic cubic resolvent. We note again that Theorem \ref{D4 thm} provides an asymptotic estimation of elements in a thin family. \\ 


Next let us compare the orders of magnitude of the asymptotic number of quartic $D_4$-fields (with monogenic cubic resolvent), when ordered by discriminant and by conductor, respectively. When counting all quartic $D_4$-fields, by the results of \cite{ASVW} and \cite{CDO} mentioned above, the ratio of the two main terms is of order $\log X$. When counting only those with monogenic cubic resolvent, by our Theorems \ref{real split MT} and \ref{D4 thm}, the ratio between the main terms is a bit more dramatic, as they are not the same size on the logarithmic scale. This can be explained heuristically by the following: the discriminant of a pair of generic ternary quadratic forms has degree equal to the number of parameters (both equal to $12$), which gives an exponent of $1$ on the log scale. The conductor, as given in \cite{ASVW}, is a degree $8$ polynomial in $8$ parameters, which again gives the exponent $1$. In our case the discriminant is a degree $6$ polynomial in $3$ variables, giving the exponent $1/2$ while the conductor is a degree $4$ polynomial in $3$ variables, giving the exponent $3/4$. \\

Finally, as was shown by Bhargava and Shankar in \cite{BS}, there is a deep connection between binary quartic forms and elliptic curves. In particular, integral and irreducible binary quartic forms $F$ with \emph{small} Galois group, by which we mean $\Gal(F)\simeq D_4,V_4,C_4$, are related to elliptic curves over $\bQ$ with rational $2$-torsion; see the last paragraph in \cite[Section 1]{TX}.
Here $C_4$ is the cyclic group of order $4$. Such elliptic curves are given by the models 
\[ \label{2tor curve} E_{a,b}: y^2 = x(x^2 + ax + b), \mbox{ where }a,b\in\bZ\mbox{ and }p^2\mid a\mbox{ implies }p^4\nmid b\mbox{ for all primes $p$},\]
which can be viewed as a set of elliptic curves with a \emph{marked rational 2-torsion point}.  \\

Let $N_{E}'(X)$ be the number of such elliptic curves $E_{a,b}$ with $0<|\mbox{disc}(E_{a,b})|<X$, and note that $\mbox{disc}(E_{a,b}) = b^2(a^2-4b)$. Using a simplified version of the proof of Theorem~\ref{real split MT}, we are able to obtain an exact asymptotic formula:

\begin{theorem} \label{EC thm} We have
\[N_E'(X) = \frac{2}{3}\prod_{p}\left(1-\frac{1}{p^6}\right) X^{1/2} \log X+ O \left(X^{1/2}\right)=\frac{2}{3}\frac{1}{\zeta(6)}X^{1/2} \log X+ O \left(X^{1/2}\right).\]
\end{theorem}

\begin{remark}As we shall explain in Subsection \ref{structure}, to count quartic $D_4$-fields with monogenic cubic resolvent, we need to count certain triplets of integers, and we do so by counting lattice points in a $2$-dimensional region, weighing each point by an appropriate 
divisor function. The major obstacle which prevents us from obtaining an exact asymptotic formula for $N_{D_4}'(X)$ is that the divisor function $d(\cdot)$ is very poorly understood in short intervals. We are unable to average and are forced to use the uniform upper bound $d(n) = O(n^{1/\log\log n})$. However, for $N_E'(X)$ the region is already $2$-dimensional, and so we do not need to take divisors. The obstacle is therefore circumvented and we are able to get an exact asymptotic formula.
\end{remark}

One should compare Theorem \ref{EC thm} with the analogous result of counting elliptic curves with a marked $2$-torsion point by \emph{height}, used by Klagsbrun and Lemke-Oliver in \cite{KL-O} to compute the distribution of Tamagawa ratios in this family. In particular, they showed that the number of such elliptic curves having height bounded by $X$ is of order $X^{1/2}$. They used this to obtain the interesting consequence that the average size of the $\phi$-Selmer group, given by the degree $2$-isogeny $\phi$ whose square is multiplication-by-$2$, is unbounded in this family, in marked contrast to the generic case considered by Bhargava and Shankar \cite{BS}. Theorem \ref{EC thm} shows that there is an extra factor of $\log X$ when counting by discriminant. \\ 

Perhaps one hopes that the analogous result to Theorem \ref{real split MT} can be obtained for elliptic curves; that is, one can count the curves $E_{a,b}$ by their conductor. However, unlike the case of binary quartic forms, we do not have an easy way to associate coefficients $a,b$ in the model $E_{a,b}$ with the conductor of the elliptic curve. It can easily be shown that when $b(a^2 - 4b)$ is square-free that it is indeed equal to the conductor of $E_{a,b}$; but in general this is not true. Thus, our methods do not yield a simple way to count elliptic curves with rational 2-torsion by conductor. \\

Finally, we remark that the proportionality constants in Theorems \ref{real split MT}, \ref{V4 thm}, and \ref{EC thm} are exactly as one would expect: they are products of local densities. The infinite Euler product that appears in each case captures the local behavior at every prime; for the ring cases it captures the behavior of being maximal at a prime, and for the elliptic curve case it captures the property of being minimal. The various terms involving the $\Gamma$-function and other irrational constants reflect the real density of a certain region in $\bR^2$. This is in marked contrast to the main term in \cite{CDO}, where one does not expect it to have a decomposition as a product of local densities. We also note that the local densities in Theorems \ref{real split MT} and \ref{V4 thm} are different than those that appear in \cite{ASVW} and \cite{Bai}, respectively, for the full families of quartic $D_4$- and $V_4$-fields. 

\subsection{Structure of the argument}
\label{structure}

This paper can be thought of as separated into three components. First, we show that the arithmetic objects of interest, namely $\GL_2(\bZ)$-equivalence classes of integral and irreducible binary quartic forms $F$ with the property that $L_F=\bQ(\theta_F)$ is a quartic $D_4$-field and $Q_F$ is the ring of integers in $L_F$, are parametrized by some subsets of the families 
\begin{align}\label{F in J}
V_{\J^{(1)}}(\bZ) & = \{Ax^4 + Bx^2 y^2 + Cy^4 : A,B,C\in\bZ\},\\\notag
V_{\J^{(2)}}(\bZ) & = \{Ax^4 + Bx^3 y + (C+2A)x^2 y^2 + Bxy^3 + A y^4 : A,B,C\in\bZ\},\\\notag
V_{\J^{(3)}}(\bZ) & = \{Ax^4 + Bx^3 y + (C-2A)x^2 y^2 - Bxy^3 + A y^4 : A,B,C\in\bZ\}.
\end{align}
The key idea behind the proof is the following observation: for any such $F$, the field $L_F$ contains a quadratic subfield $K_F$, and $Q_F \cap K_F$ must be equal to the ring of integers in $K_F$. In Theorem \ref{necessary thm'}, we shall use this to give a strong necessary condition for $Q_F$ to be maximal, which can be reduced to the condition that $F$ has a $\GL_2(\bZ)$-translate lying in one of the families in (\ref{F in J}). The proof involves an explicit determination of a basis of the ring $Q_F \cap K_F$ (see (\ref{QF cap K}) below) and comparison of discriminants. Another important ingredient is a characterization for $\Gal(F)$ to be small in terms of binary quadratic forms (see Theorem \ref{h(f,g)}) which builds upon our previous work \cite{TX}. The characterization also allows us to define, for $\Gal(F)\simeq D_4$, the \emph{conductor polynomial} $\C(F)$ which coincides with $\C(L_F)$ at least when $Q_F$ is maximal; see Subsection \ref{conductor poly}, and we shall see that $\C(F)$ is given by an explicit polynomial expression of the coefficients of $F$. When $F$ is in one of the families (\ref{F in J}), the expression of $\C(F)$ (stated in (\ref{Ci})) becomes extremely simple, and this enables us to formulate the counting theorems.\\

The second component involves counting the elements in $V_{\J^{(i)}}(\bZ)$ having bounded conductor. We remark that the forms in $V_{\J^{(3)}}(\bZ)$ give a negligible contribution (see Proposition \ref{N3X}), so it suffices to consider the families $V_{\J^{(i)}}(\bZ)$ for $i=1,2$. The main idea of the proof of the counting theorems is \emph{geometry of numbers}. If we identify each of the families $V_{\J^{(i)}}(\bZ)$ as a subset of $\bR^3$, then the problem is reduced to counting lattice points in some region in $\bR^3$. These regions are relatively skew, induced by the product structure of the height function $\C(F)$. If one tries to apply Davenport's Lemma (see Proposition \ref{Davenport}), then one would need to show that the volumes of these regions actually exceed the areas and lengths of the projections onto the coordinate planes and axes. This issue is complicated by the skewness. \\

Fortunately, the product structure of the counting function $\C(F)$ allows us to reduce the counting problem to one about counting lattice points in a \emph{2-dimensional} region with \emph{weights} (see Subsection \ref{planar section}). In this planar region, it is then a relatively straightforward matter to show that the contribution from the problematic subregions (cusps) are bounded (see Lemma \ref{tail cut}). \\

We then proceed to count the forms $F$ in $V_{\J^{(i)}}(\bZ)$ which are \emph{maximal}, namely it associated quartic ring $Q_F$ is maximal. This necessitates the formulation of a set of criteria to determine whether $F$ is maximal, and we shall distill these conditions from \cite{HCL3} in Section \ref{max section}. These criteria, stated as congruence conditions on the coefficients of $F$, are explicit enough to allow us to calculate the exact density of elements in $V_{\J^{(i)}}(\bZ)$ which are maximal at a given prime $p$. \\

Since being a maximal ring is equivalent to being a maximal ring at every prime, we need to stitch the local conditions together. This is tantamount to applying a square-free sieve to the conductor polynomial (see Subsection \ref{sieve}). As is well-known, the most difficult aspect of applying such an argument is uniformity; that is, one has to show that the contribution from extremely large primes are controlled. We again rely on the fact that the conductor polynomial is reducible and each irreducible component has low degree to accomplish this. However, the skewness of the regions considered makes this task considerably more challenging. \\ 

To complete the proof of Theorem \ref{real split MT} we have to show that most of the forms we count are dihedral, as opposed to being reducible or having a strictly smaller Galois group. This is accomplished in Subsection \ref{max abel forms}. \\

In the final section of the paper we prove our counting results when ordered by discriminant. For Theorem \ref{V4 thm}, we are able to extract the exact asymptotic formula by applying generalization of a theorem of Mahler due to Stewart and the second author \cite{SX}. For Theorem \ref{D4 thm}, we are unable to obtain an asymptotic formula because the region in $\bR^3$ carved out by bounding the discriminant is just too skew, and that the ``cusps" actually contain most of the points! Hence, it is unlikely that this problem can be solved using methods from geometry of numbers without significant new ideas. Nevertheless, we are able to obtain a lower bound which we expect to be the correct order of magnitude and an upper bound that is the same size on the logarithmic scale as the lower bound. Finally, we count elliptic curves with a marked $2$-torsion point and obtain Theorem \ref{EC thm}. The difference in Theorems \ref{D4 thm} and \ref{EC thm} is that the latter involves counting integral points in a \emph{2-dimensional} region without weights, and the skewness there is not as damaging. 

\section{Pairs of binary quadratic forms and pre-maximal binary quartic forms}\label{reduce section}

In \cite{TX}, we studied integral and irreducible binary quartic forms whose Galois group is small. More precisely, consider the \emph{twisted action} of $\GL_2(\bR)$ on real binary quartic forms, given by
\begin{equation}\label{action} F_T(x,y) = \frac{1}{\det(T)^2}F(t_1x+t_2y,t_3x+t_4y)\mbox{ for }T = \begin{pmatrix}t_1&t_2\\t_3&t_4\end{pmatrix}.\end{equation}
For each real binary quadratic form $\J$ having non-zero discriminant, put
\begin{align*}\label{V def}
V_\J(\bR) & = \{\mbox{real binary quartic forms $F$ such that $F_{M_\J} = F$}\},\\\notag
V_\J(\bZ) & = \{\mbox{integral binary quartic forms $F$ such that $F_{M_\J} = F$}\},
\end{align*}
where $M_\J$ is the matrix defined by
\begin{equation}\label{MJ def}M_\J = \begin{pmatrix} \beta & 2\gamma \\ -2\alpha & -\beta\end{pmatrix}\mbox{ for }\J(x,y) = \alpha x^2+\beta xy + \gamma y^2.\end{equation}
The definitions of $V_\J(\bR)$ and $V_{\J}(\bZ)$ clearly remain unchanged if we scale $\J$ by an element of $\bR^\times$. In \cite[Theorem 1.1]{TX}, we proved: 

\begin{proposition}\label{criterion}Let $F$ be an integral and irreducible binary quartic form. Then, its Galois group is small if and only if $F\in V_\J(\bZ)$ for some integral binary quadratic form $\J$ having non-zero discriminant. Moreover, in this case, up to scaling there is a unique such $\J$ if  $\Gal(F)\simeq D_4,C_4$, and exactly three such $\J$ if $\Gal(F)\simeq V_4$.
\end{proposition}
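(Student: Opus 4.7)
The plan is to interpret the identity $F_{M_\J}=F$ geometrically as saying that the involution of $\bP^1$ induced by $M_\J$ preserves the four-point divisor cut out by $F$, and hence encodes a pairing of the four roots of $F$ into two unordered pairs. Matching such pairings to the Galois behaviour of the roots then yields both halves of the proposition.

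The first step is a direct computation giving $M_\J^{\,2}=(\beta^2-4\alpha\gamma)\,I = \mbox{disc}(\J)\cdot I$, so that under the twisted action (\ref{action}), $M_\J$ represents an order-$2$ element of $\PGL_2(\bR)$ whose two (necessarily distinct) fixed points on $\bP^1(\bC)$ are the roots of $\J$. Writing $F(x,1)=a_4\prod_{i=1}^{4}(x-\theta_i)$, the relation $F_{M_\J}=F$ translates into the statement that $M_\J$ permutes the set $\{\theta_1,\dots,\theta_4\}$ as a M\"obius transformation. Because $F$ is $\bQ$-irreducible of degree $4$ while $\J$ has degree $2$, no $\theta_i$ can lie in the fixed-point set of $M_\J$, so $M_\J$ acts on the four roots as a product of two disjoint transpositions and produces an unordered partition $\pi_\J$ of the roots into two pairs. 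Since $M_\J\in\GL_2(\bQ)$, the partition $\pi_\J$ is $\Gal(F)$-invariant.

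Conversely, starting from any $\Gal(F)$-invariant pairing $\pi$, I would show there is a unique involution $\sigma_\pi\in\PGL_2(\bC)$ realising $\pi$. The set $\{\xi_1,\xi_2\}$ of its two fixed points is then Galois-invariant, so $\xi_1+\xi_2$ and $\xi_1\xi_2$ lie in $\bQ$, and clearing denominators in the monic quadratic with roots $\xi_1,\xi_2$ produces an integral $\J$ with $\mbox{disc}(\J)\neq 0$, unique up to scaling, such that $M_\J$ represents $\sigma_\pi$; a short computation using Vieta's relations ($\xi_1+\xi_2=-\beta/\alpha$ and $\xi_1\xi_2=\gamma/\alpha$) recovers precisely the matrix of (\ref{V def}) and verifies $F\in V_\J(\bZ)$. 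Combined with the previous step, this establishes a bijection between the set of integral $\J$ with $\mbox{disc}(\J)\neq 0$ and $F\in V_\J(\bZ)$, taken up to scaling, and the set of $\Gal(F)$-invariant pairings of the roots of $F$.

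It remains only to count $\Gal(F)$-invariant pairings. The natural action of $S_4$ on the three partitions of $\{1,2,3,4\}$ into two unordered pairs factors through the standard surjection $S_4\twoheadrightarrow S_3$ with kernel $V_4$; hence, regarding $G=\Gal(F)$ as a transitive subgroup of $S_4$, one finds no fixed pairing when $G\in\{S_4,A_4\}$, exactly one when $G\in\{C_4,D_4\}$, and all three when $G=V_4$. Combined with the bijection above, this gives both claims: existence of such a $\J$ is equivalent to $G$ being small, and uniqueness up to scaling is equivalent to $G\simeq D_4$ or $C_4$. The main obstacle I anticipate is the converse construction in the third paragraph: producing an honestly \emph{integral} $\J$ from an abstract Galois-invariant pairing while confirming that distinct pairings cannot yield $\bR^\times$-proportional integral forms; this amounts to checking that the rescaling used to clear denominators fits exactly within the scaling ambiguity built into the definition of $V_\J(\bZ)$.
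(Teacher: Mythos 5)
First, a point of comparison: the paper does not actually prove this proposition; it imports it verbatim from \cite[Theorem 1.1]{TX}. So there is no in-paper argument to measure your proof against, and I am assessing it on its own terms. Your overall strategy --- reading $F_{M_\J}=F$ as saying that the rational involution $M_\J$ induces a Galois-stable pairing of the four roots, constructing the inverse map from pairings to forms $\J$ via the fixed-point quadratic of the unique involution realising a pairing, and then counting invariant pairings through $S_4\twoheadrightarrow S_3$ (none for $S_4,A_4$; one for $D_4,C_4$; three for $V_4$) --- is sound and gives both assertions. The obstacle you single out at the end is not actually one: the trace-zero rational matrix representing the involution can always be rescaled to a primitive integral $\left(\begin{smallmatrix}\beta&2\gamma\\-2\alpha&-\beta\end{smallmatrix}\right)$, and this rescaling is exactly the scaling ambiguity built into $V_\J(\bZ)$.

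The genuine gap is in the sentence ``\dots and verifies $F\in V_\J(\bZ)$.'' Knowing that $M_\J$ permutes the roots of $F$ only yields $F_{M_\J}=\lambda F$ for some scalar $\lambda$, and since $M_\J^2=\Delta(\J)I$ acts trivially under the twisted action, $\lambda=\pm1$. The case $\lambda=-1$ is not vacuous: the space of real binary quartics is $5$-dimensional, $V_\J(\bR)\cong\bR^3$, so the $(-1)$-eigenspace of $F\mapsto F_{M_\J}$ is $2$-dimensional, and your argument as written does not exclude $F$ landing there. Two one-line repairs: (i) since $\J_{M_\J}=\J$, $\varphi_{M_\J}=-\varphi$ for $\varphi\in W_\J(\bR)$, and the twisted action is multiplicative, the $(-1)$-eigenspace equals $\J\cdot W_\J(\bR)$, so any $F$ with $F_{M_\J}=-F$ is divisible by the rational quadratic $\J$ and hence reducible, contrary to hypothesis; or (ii) compute directly: if $M_\J$ sends $\theta_j$ to $\theta_{\pi(j)}$ then $2\alpha\theta_{\pi(j)}\theta_j+\beta(\theta_{\pi(j)}+\theta_j)+2\gamma=0$, whence each linear factor of $F(\beta x+2\gamma y,-2\alpha x-\beta y)$ equals $(2\alpha\theta_j+\beta)(x-\theta_{\pi(j)}y)$ and $(2\alpha\theta_{\pi(j)}+\beta)(2\alpha\theta_j+\beta)=\beta^2-4\alpha\gamma$, so the full product is exactly $\Delta(\J)^2F(x,y)$, i.e.\ $\lambda=+1$. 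With either insertion the remaining steps (uniqueness of the involution realising a pairing, rationality of its fixed-point quadratic via Galois descent for $\PGL_2$, and the count of fixed pairings over the five transitive subgroups of $S_4$) are correct, and the argument is complete.
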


In this section, let us fix an integral and primitive binary quadratic form
\[ \J(x,y) = \alpha x^2 + \beta xy + \gamma y^2, \mbox{ where }\beta^2-4\alpha\gamma\neq0\mbox{ and }\gcd(\alpha,\beta,\gamma)=1,\]
having non-zero discriminant. We shall give an alternative description of $V_\J(\bZ)$ in terms of binary quadratic forms. To that end, we need to make a definition:

\begin{definition}\label{con}Given a real binary quadratic form $\varphi$, define $\varphi_2,\varphi_1,\varphi_0\in\bR$ by
\[\label{varphi210}\varphi(x,y) = \varphi_2 x^2 + \varphi_1 xy + \varphi_0y^2,\mbox{ and write }\Delta(\varphi) = \varphi_1^2 - 4\varphi_2\varphi_0\]
for its discriminant. Given a pair $(f,g)$ of real binary quadratic forms, define
\[\J_{(f,g)}(x,y) = \frac{1
}{2} \begin{vmatrix} \frac{\partial f}{\partial x} & \frac{\partial f}{\partial y} \\ \frac{\partial g}{\partial x} & \frac{\partial g}{\partial y} \end{vmatrix} = (f_2 g_1 - f_1 g_2)x^2 + 2(f_2 g_0 - f_0 g_2)xy + (f_1 g_0 - f_0 g_1)y^2,\]
called the \emph{Jacobian determinant of $(f,g)$}. Also, let the group $\GL_2(\bR)$ act on it via
\[ _{T}(f,g) = \frac{1}{\det(T)}(t_1f + t_2g, t_3f + t_4g)\mbox{ for }T = \begin{pmatrix}t_1&t_2\\t_3&t_4\end{pmatrix}.\]
Note that the Jacobian determinant is invariant under this action. In the case that both $f$ and $g$ are integral, we shall say that $(f,g)$ is \emph{primitive} if 
\[ \Delta(\J_{(f,g)})\neq0\AND\gcd\left(\J_{(f,g),2}, \J_{(f,g),1}/2, \J_{(f,g),0}\right) = 1.\]
These definitions are motivated by Propositions~\ref{Jfg lemma} and~\ref{primitive char} below.
\end{definition}

\begin{theorem}\label{h(f,g)}Let $F$ be an integral binary quartic form of non-zero discriminant. Then, we have $F\in V_\J(\bZ)$ if and only if $F = h(f,g)$ for some integral binary quadratic forms $h,f,g$ such that $\J_{(f,g)}(x,y)$ is proportional to $\J(x,y)$. In this case, we have 
\[\label{thm1 display} (\Delta(h)\Delta(\J_{(f,g)})/4)^2\mbox{ divides }\Delta(F)\]
and the pair $(f,g)$ may be taken to be primitive. Moreover, if $F=h(f,g)$ and $F=h'(f',g')$ are two such representations of $F$ with $(f,g)$ and $(f',g')$ primitive, then $h$ and $h'$ are $\GL_2(\bZ)$-equivalent.
\end{theorem}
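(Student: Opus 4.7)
The argument proceeds in four stages. For the ``if'' direction, I proceed geometrically: given $F = h(f,g)$ with $\J_{(f,g)}$ proportional to $\J$, the pair $(f,g)$ defines a degree-two rational map $(f,g) \colon \bP^1\to\bP^1$ whose branch locus in the source is the zero set of $\J_{(f,g)}$, hence of $\J$. The unique nontrivial involution of $\bP^1$ fixing these two points is (projectively) the action of $M_\J$, so $M_\J$ swaps the sheets of the cover. Because the zero set of $f$ (resp.\ $g$) in $\bP^1$ is a single fiber of $(f,g)$, it is an $M_\J$-orbit, so $f\circ M_\J$ and $f$ share zeros and satisfy $f\circ M_\J = \lambda f$ for a scalar $\lambda$, and similarly $g\circ M_\J = \lambda' g$. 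The identity $M_\J^2 = \Delta(\J) I$ combined with homogeneity forces $\lambda,\lambda' \in \{\pm\Delta(\J)\}$, and the transformation law $J_{(f\circ T,\,g\circ T)} = \det(T)\cdot J_{(f,g)}\circ T$ for Jacobians forces $\lambda = \lambda'$. Since $h$ is of degree two, $F\circ M_\J = h(\lambda f,\lambda g) = \lambda^2 F = \det(M_\J)^2 F$, whence $F_{M_\J} = F$.

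For the converse, I first produce a primitive integral pair $(f_0, g_0)$ with $\J_{(f_0,g_0)}$ a nonzero scalar multiple of $\J$. The proportionality constraint amounts to three linear equations in the six coefficients of $(f_0, g_0)$, and an explicit integral primitive solution can be written directly in terms of $\alpha,\beta,\gamma$. By the ``if'' direction, $f_0^2, f_0 g_0, g_0^2 \in V_\J(\bR)$; they are linearly independent because $(f_0,g_0) \colon \bP^1\to\bP^1$ is dominant. Since $\dim_\bR V_\J(\bR) = 3$ by inspection of (\ref{F in J}) after reducing to a standard model via $\GL_2(\bR)$, the three forms are a basis, and any $F \in V_\J(\bR)$ can be uniquely written $h(f_0,g_0)$ for some real $h = AX^2+BXY+CY^2$. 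For integrality of $h$ when $F$ is integral, I compare the sublattices $\bZ f_0^2 \oplus \bZ f_0g_0 \oplus \bZ g_0^2$ and $V_\J(\bZ)$ inside $V_\J(\bR)$, using the coordinate system $\Psi$ from (\ref{abc}) and the explicit description of $\Lambda_\J$ in \cite{TX}; the transition matrix is then shown to be unimodular using the primitivity of $(f_0,g_0)$.

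The divisibility (\ref{thm1 display}) is verified by direct computation: since $\Delta(F)$, $\Delta(h)$, and $\Delta(\J_{(f,g)})$ transform compatibly under the twisted $\GL_2(\bR)$-action, one may assume $\J = \J^{(i)}$ for some $i \in \{1,2,3\}$ and $F$ has the explicit shape in (\ref{F in J}), after which expansion in each of the three cases exhibits $\Delta(F)$ as an integer multiple of $(\Delta(h)\Delta(\J_{(f,g)})/4)^2$. For uniqueness, if $F = h(f,g) = h'(f',g')$ with both pairs primitive integral and with Jacobians proportional to $\J$, the two degree-two covers share the same branch locus, so they differ by an automorphism $T \in \GL_2(\bQ)$ of the target: $\binom{f'}{g'} = T \binom{f}{g}$. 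Joint primitivity forces $T \in \GL_2(\bZ)$, and substituting yields $h = h'\circ T$. The hardest step will be the integrality of $h$: a clean lattice-covolume comparison requires constructing an explicit primitive $(f_0,g_0)$ for each primitive integral $\J$ and tracking normalizations carefully so that no hidden scalar factor obstructs unimodularity.
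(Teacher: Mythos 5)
Your geometric argument for the ``if'' direction is correct and is a genuinely different route from the paper's: the paper characterizes $\J_{(f,g)}\propto\J$ linearly via the anti-invariant space $W_\J(\bR)=\{\varphi:\varphi_{M_\J}=-\varphi\}$ (Proposition~\ref{Jfg lemma}), whereas you identify the zero locus of $\J$ with the ramification of the degree-two cover $[f:g]$ and its deck involution with $M_\J$; both yield $F_{M_\J}=F$. The divisibility (\ref{thm1 display}), however, cannot be reduced to the families $\J^{(i)}$ as you propose: a primitive integral $\J$ of discriminant outside $\{1,4,-4\}$ is not $\GL_2(\bZ)$-equivalent to any $\J^{(i)}$, and a $\GL_2(\bR)$-transformation destroys integrality, so a divisibility of \emph{integers} checked on the three special families does not establish the general case. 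What saves the claim is that no reduction is needed: with $\operatorname{Res}(f,g)=\Delta(\J_{(f,g)})/4$ one has the universal identity $\Delta(h(f,g))=Q\cdot\bigl(\Delta(h)\operatorname{Res}(f,g)\bigr)^2$, where $Q$ is an explicit polynomial with integer coefficients in the coefficients of $f,g,h$ (expressible in $\Delta(f),\Delta(g)$, the associated bilinear form, and the $h_j$); this is the intended ``direct calculation.''

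The genuine gap is the one you flag but do not close, and it is load-bearing in three places. Everything in the converse and in the uniqueness statement rests on the equivalence, for integral non-proportional $f,g$ with $\J_{(f,g)}\propto\J$, between gcd-primitivity in the sense of Definition~\ref{con} and being a $\bZ$-basis of the full lattice $W_\J(\bZ)$ of integral forms anti-fixed by $M_\J$. You need one implication to know a primitive pair \emph{exists} (your ``explicit solution in terms of $\alpha,\beta,\gamma$'' is not routine: the natural candidates $(\beta x^2+2\gamma xy,\ 2\alpha xy+\beta y^2)$ have Jacobian $2\beta\cdot\J$ rather than $\pm n_\beta\J$, so extraneous factors genuinely appear), and you need the other implication both for the unimodularity of your transition matrix and for the step ``joint primitivity forces $T\in\GL_2(\bZ)$'': primitivity of both pairs only gives $\det T=\pm1$ with $T\in\GL_2(\bQ)$, and a matrix such as $\operatorname{diag}(2,1/2)$ is not excluded unless you know both pairs generate the same lattice. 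The paper settles this by computing the covolume of $\L_\J$ prime by prime (Lemma~\ref{W lemma}, Proposition~\ref{primitive char}) and then lifting to rank three by comparison with $\det(\Lambda_\J)$ (Lemma~\ref{V lemma}, Proposition~\ref{lift basis}); note also that the $\alpha=0$ case of $\det(\Lambda_\J)$ is not in \cite{TX} and requires a separate computation. Until you supply these covolume computations (or an equivalent), the existence of the representation with integral $h$, and the uniqueness of $h$ up to $\GL_2(\bZ)$, remain unproved.
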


Theorem \ref{h(f,g)} can be given in a more pithy manner: an integral and irreducible binary quartic form has small Galois group if and only if it is a composition of integral binary quadratic forms. Theorem~\ref{h(f,g)} also allows us to define the \emph{conductor polynomial} $\C(F)$ of $F$ when $\Gal(F)\simeq D_4$ which gives the conductor of the corresponding quartic $D_4$-field when $Q_F$ is maximal; see Subsection~\ref{conductor poly}. Another important application of Theorem~\ref{h(f,g)} is the next result, which gives a strong necessary condition for $Q_F$ to be maximal when $F$ has small Galois group.

\begin{theorem}\label{necessary thm'}Let $F\in V_{\J}(\bZ)$ be an irreducible form such that $Q_F$ is maximal, and write $F = h(f,g)$ as given by Theorem~\ref{h(f,g)}. Then, necessarily \[\mbox{$\Delta(h)$ is a fundamental discriminant, $(f,g)$ is primitive, and $\Delta(\J_{(f,g)}) = \pm4$.}\] In particular, we must have $\Delta(\J)\in\{1,4,-4\}$.
\end{theorem}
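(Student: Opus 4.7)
The plan is to combine the divisibility (\ref{thm1 display}) from Theorem~\ref{h(f,g)} with the fact that maximality of $Q_F$ pins $\Delta(F)$ to the field discriminant $d_L$, and then to argue locally to forbid superfluous square factors.

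I would first identify the quadratic subfield arising from $h$. Writing $h(u,v) = pu^2+quv+rv^2$ and letting $\theta$ be a root of $F(x,1)$, the ratio $\xi := f(\theta,1)/g(\theta,1) \in L = \bQ(\theta)$ satisfies $h(\xi,1)=0$, hence $\sqrt{\Delta(h)} = 2p\xi+q$ lies in $L$. By Proposition~\ref{criterion}, $\Gal(F) \in \{V_4,C_4,D_4\}$, so $K := \bQ(\sqrt{\Delta(h)})$ is a quadratic subfield of $L$. Writing $\Delta(h) = m^2 d_K$ with $d_K = \operatorname{disc}(K)$ fundamental, the assertion that $\Delta(h)$ is fundamental becomes the claim $m=1$.

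Next, maximality of $Q_F$ together with irreducibility of $F$ yield $Q_F = \O_L$ and $\Delta(F) = d_L$. The tower formula $d_L = d_K^2 \cdot N_{K/\bQ}(\mathfrak{d}_{L/K})$ for the quadratic extension $L/K$, substituted into (\ref{thm1 display}), rearranges to
\[ m^4 \bigl( \Delta(\J_{(f,g)})/4 \bigr)^2 \;\Big|\; N_{K/\bQ}(\mathfrak{d}_{L/K}). \]
The heart of the proof is to force $m=1$, $|\Delta(\J_{(f,g)})|=4$, and $(f,g)$ primitive by a prime-by-prime analysis. If some prime $p$ divides $m$, or $p^2$ divides $\Delta(\J_{(f,g)})/4$, or $p$ divides the gcd of the coefficients of $(f,g)$, then the displayed divisibility produces an excess factor of $p$ in $N(\mathfrak{d}_{L/K})$; I would convert this into an explicit $\bZ_p$-overring of $Q_F \otimes \bZ_p$ inside $\O_L \otimes \bZ_p$, contradicting maximality. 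Concretely, the overring can be produced by an explicit $\GL_2(\bZ_p)$ substitution on $(f,g)$ or, using the proportionality $\J_{(f,g)} = \lambda \J$, on $h$ that absorbs the excess $p$; the ring-theoretic integrality would be verified against Bhargava's $p$-adic maximality criteria applied to the Wood pair $(U_0, V_F)$ of (\ref{Wood pair}).

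The final assertion follows by writing $\J_{(f,g)} = \lambda \J$ with $\lambda \in \bZ$, which is forced by the primitivity of $\J$, so that $\lambda^2 \Delta(\J) = \pm 4$; combined with the congruence $\Delta(\J) \equiv 0,1 \pmod{4}$ this restricts $\Delta(\J)$ to $\{1,4,-4\}$. The main obstacle is the local maximality step, which is particularly delicate at $p=2$ owing to the denominator $4$ in (\ref{thm1 display}) and the even-discriminant convention; there the putative overring must be checked against Bhargava's criterion directly, rather than inferred from the divisibility alone.
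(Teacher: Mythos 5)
Your identification of the quadratic subfield $K=\bQ(\sqrt{\Delta(h)})$ via the root $\xi=f(\theta,1)/g(\theta,1)$ of $h(x,1)$ matches the paper's starting point (its factorization (\ref{F factor fg})), but the core of your argument has a gap. The divisibility $m^4\bigl(\Delta(\J_{(f,g)})/4\bigr)^2 \mid N_{K/\bQ}(\mathfrak{d}_{L/K})$ that you extract from (\ref{thm1 display}) and the tower formula carries no contradiction: $N_{K/\bQ}(\mathfrak{d}_{L/K})$ is just some integer, and it can perfectly well be divisible by high powers of $p$ when $\O_L$ is maximal. So this rearrangement cannot ``force $m=1$''; it is a non sequitur, and the entire burden then falls on your closing clause --- ``convert this into an explicit $\bZ_p$-overring \dots verified against Bhargava's criteria'' --- which is exactly the hard content of the theorem and is not carried out. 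You never exhibit the integral element witnessing non-maximality, nor explain why an excess prime $p$ in $m$, in $\Delta(\J_{(f,g)})/4$, or in the content of $\J_{(f,g)}$ produces one; asserting that a suitable $\GL_2(\bZ_p)$ substitution ``absorbs the excess $p$'' is a plan, not a proof.

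The paper supplies precisely this missing ingredient, and does so without invoking Bhargava's $p$-adic criteria at all. Lemma~\ref{in K lem} computes $Q_F\cap K$ explicitly from the coefficient identities (\ref{ai}): it equals $\bZ\oplus\bZ\tau$ with $\tau=\frac{\Delta(\J_{(f,g)})}{4d}\cdot\frac{-h_1+\sqrt{k}}{2}$, where $k=\Delta(h)$ and $d=\gcd(\J_{(f,g),2},\J_{(f,g),1}',\J_{(f,g),0})$. Maximality gives $Q_F\cap K=\O_K$ by (\ref{maximal iff}), and comparing $\operatorname{disc}(\bZ\oplus\bZ\tau)=(\Delta(\J_{(f,g)})/(4d))^2m^2k'$ with $\operatorname{disc}(\O_K)$ immediately forces $k$ to be fundamental and $\Delta(\J_{(f,g)})=\pm4d$; since $\bigl((\J_{(f,g),1}')^2-\J_{(f,g),2}\J_{(f,g),0}\bigr)/d$ is itself divisible by $d$ and equals $\pm1$, one gets $d=1$, hence primitivity and $\Delta(\J_{(f,g)})=\pm4$. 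To complete your route you would have to either reproduce a computation equivalent to Lemma~\ref{in K lem} or actually construct and verify the overrings prime by prime, including the delicate case $p=2$ that you flag but do not resolve.
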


For $D\in\{1,4,-4\}$, up to $\GL_2(\bZ)$-equivalence there is a unique integral and primitive binary quadratic form $\J$ of discriminant $D$. In particular, we may take them to be
\[ \J^{(1)}(x,y) = xy,\,\ \J^{(2)}(x,y) = x^2 - y^2,\,\ \J^{(3)}(x,y) = x^2 + y^2,\]
respectively. A simple calculation shows that $V_{\J^{(i)}}(\bZ)$ is indeed given as in (\ref{F in J}) for all $i=1,2,3$. Clearly $F\mapsto F_T$ defines a bijection $V_{\J}(\bZ)\longrightarrow V_{\J_T}(\bZ)$ preserving the $\GL_2(\bZ)$-action. Theorem \ref{necessary thm'}, together with Proposition~\ref{criterion}, then yields:

\begin{corollary} \label{three fams} Let $F$ be an integral and irreducible binary quartic form with small Galois group such that $Q_F$ is maximal. Then $F$ is $\GL_2(\bZ)$-equivalent to an element of $V_{\J^{(i)}}(\bZ)$ for some $i =1,2,3$, and this $i$ is unique when $\Gal(F)\simeq D_4,C_4$.
\end{corollary}

We remark that Wood obtained a similar result to our Theorems \ref{h(f,g)} and \ref{necessary thm'} in her thesis \cite{Wood thesis}. Indeed, she showed that if a pair $(U,V)$ of ternary quadratic forms corresponds to a maximal quartic $D_4$-ring $Q$, then they must admit a $\GL_2(\bZ) \times \GL_3(\bZ)$-translate of the shape
\[\left(U^\flat, V^\flat \right) = \left(\begin{pmatrix} 0 & 0 & 0 \\ 0 & u_{22} & \frac{u_{23}}{2} \\ 0 & \frac{u_{23}}{2} & u_{33} \end{pmatrix}, \begin{pmatrix} \pm 1 & \frac{v_{12}}{2} & \frac{v_{13}}{2} \\ \frac{v_{12}}{2} & v_{22} & \frac{v_{23}}{2} \\ \frac{v_{13}}{2} & \frac{v_{23}}{2} & v_{33} \end{pmatrix} \right). \]
The matrix $U^\flat$ corresponds to a binary quadratic form $u^\flat(x,y) = u_{22}x^2 + u_{23}xy + u_{33}y^2$, which then gives a quadratic suborder of $Q$, and the condition that the top left entry of $V^\flat$ must be a unit corresponds to a necessary condition for $Q$ to be maximal. In Section \ref{max section}, we shall use criteria given by Bhargava in \cite{HCL3} to obtain maximality criteria for forms in $V_{\J^{(i)}}(\bZ)$, $i = 1,2,3$. These results are also needed by Altug, Shankar, Varma, and Wilson for their work \cite{ASVW}. \\

Finally, in view of Corollary \ref{three fams}, we make the following definition:
\begin{definition} \label{premax def} An integral binary quartic form $F$ is \emph{maximal} if it has maximal quartic ring $Q_F$, and \emph{pre-maximal} if it is $\GL_2(\bZ)$-equivalent to a form in $V_{\J^{(i)}}(\bZ)$ for some $i = 1,2,3$. \end{definition}

Our goal is then to enumerate, for each $i=1,2,3$, the set
\[ V_{\J^{(i)}}^{\mmax}(\bZ) = \{F\in V_{\J^{(i)}}(\bZ): F\mbox{ is maximal}\}.\]
In particular, by Corollary \ref{three fams}, we have
\begin{align*} N_{D_4}^{(r_2)}(X) &= \sum_{i=1}^{3}\#\left\{[F] :\begin{array}{c} \mbox{irreducible $F\in V_{\J^{(i)}}^{\mmax}(\bZ)$ with $\Gal(F)\simeq D_4$ such}\\\mbox{that $F(x,1)$ has exactly $4-2r_2$ real roots and $|\C(F)|<X$}\end{array}\right\},\\[0.5pt]
N_{D_4}'(X) &= \sum_{i=1}^{3}\#\left\{[F] : \mbox{irreducible $F\in V_{\J^{(i)}}^{\mmax}(\bZ)$ with $\Gal(F)\simeq D_4$ and $|\Delta(F)|<X$}\right\},\\[0.5pt]
N_{V_4}'(X) & =\#\left\{[F]: \mbox{irreducible $F\in \displaystyle\bigcup_{i=1}^{3}V_{\J^{(i)}}^{\mmax}(\bZ)$ with $\Gal(F)\simeq V_4$ and $|\Delta(F)|<X$}\right\},\end{align*}
where $[-]$ denotes $\GL_2(\bZ)$-equivalence class. \\

\subsection{Proof of Theorem~\ref{h(f,g)}}

First, analogous to (\ref{action}), we have the so-called \emph{twisted action} of $\GL_2(\bR)$ on the set of real binary quadratic forms, given by
\begin{equation}\label{quadratic action} \varphi_T(x,y) = \frac{1}{\det(T)}\varphi(t_1x + t_2y, t_3x + t_4y)\mbox{ for }T = \begin{pmatrix}t_1&t_2\\t_3&t_4\end{pmatrix}.\end{equation}
This is compatible with (\ref{action}), in the sense that $(\varphi\psi)_T = \varphi_T\psi_T$ for any real binary quadratic forms $\varphi$ and $\psi$. Analogous to the binary quartic form case, we then define
\begin{align*}
W_\J(\bR) & = \{\mbox{real binary quadratic forms $\varphi$ such that $\varphi_{M_\J} = -\varphi$}\},\\\notag
W_\J(\bZ) & = \{\mbox{integral binary quadratic forms $\varphi$ such that $\varphi_{M_\J} = -\varphi$}\}.
\end{align*}
Given a real binary quadratic form $\varphi$, by a direct computation, we have
\begin{equation}\label{W char}
\varphi \in W_\J(\bR) \mbox{ if and only if }2\gamma\varphi_2 - \beta\varphi_1 + 2\alpha\varphi_0 = 0.
\end{equation}
Considerations of the Jacobian determinant have the following consequence:

\begin{proposition}\label{Jfg lemma}Let $(f,g)$ be a pair of non-proportional real binary quadratic forms. Then $\J_{(f,g)}(x,y)$ is proportional to $\J(x,y)$ if and only if $f,g\in W_\J(\bR)$.
\end{proposition}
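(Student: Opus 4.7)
The proof reduces to an elementary piece of linear algebra in $\mathbb{R}^3$, packaging the coefficient triples of quadratic forms as vectors and interpreting the Jacobian determinant as a cross product.

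The first step is to identify real binary quadratic forms $\varphi$ with their coefficient vectors $\vec\varphi = (\varphi_2,\varphi_1,\varphi_0) \in \bR^3$. Under this identification, the characterization \eqref{W char} says that $W_\J(\bR)$ is precisely the two-dimensional subspace
\[
W_\J(\bR) = \{\vec\varphi \in \bR^3 : \vec\varphi \cdot \vec n_\J = 0\}, \qquad \vec n_\J := (2\gamma,-\beta,2\alpha).
\]
Next, I would expand the formula for $\J_{(f,g)}$ in Definition~\ref{con} and observe directly that its coefficient vector is, up to a permutation of entries and sign conventions, the cross product $\vec f \times \vec g$; more precisely, a short computation gives
\[
\J_{(f,g)} \;=\; (\vec f \times \vec g)_3\, x^2 \;-\; 2(\vec f \times \vec g)_2\, xy \;+\; (\vec f \times \vec g)_1\, y^2.
\]
Comparing this with $\J = \alpha x^2 + \beta xy + \gamma y^2$, I see that $\J_{(f,g)}$ is proportional to $\J$ precisely when $\vec f \times \vec g$ is proportional to $\vec n_\J$.

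With these two reformulations in hand, the proof is immediate in both directions. For the forward direction, suppose $\J_{(f,g)} = \lambda \J$; since $f,g$ are non-proportional, $\vec f$ and $\vec g$ are linearly independent, so $\vec f \times \vec g$ is a nonzero vector proportional to $\vec n_\J$. Because the cross product is orthogonal to both $\vec f$ and $\vec g$, this forces $\vec f \cdot \vec n_\J = \vec g \cdot \vec n_\J = 0$, i.e.\ $f,g \in W_\J(\bR)$. For the converse, if $f,g \in W_\J(\bR)$, then both $\vec f$ and $\vec g$ lie in the two-dimensional subspace $W_\J(\bR)$; by non-proportionality they span it, so $\vec f \times \vec g$ is nonzero and lies in the orthogonal complement of $W_\J(\bR)$, which is the line spanned by $\vec n_\J$. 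Hence $\J_{(f,g)}$ is a nonzero scalar multiple of $\J$.

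The only thing to watch carefully is bookkeeping of signs and the factor of $2$ on the $xy$-coefficient when passing between the cross product and the coefficient vector of $\J_{(f,g)}$; I do not anticipate any real obstacle beyond this routine verification.
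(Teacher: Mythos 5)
Your proof is correct and follows essentially the same route as the paper, which simply notes that non-proportionality forces $\J_{(f,g)}\neq 0$ and then verifies the equivalence directly from the characterization (\ref{W char}). Your cross-product packaging of the coefficient vectors is just a clean way of organizing that same direct verification, and the sign/factor-of-$2$ bookkeeping you flag does work out: one finds $\vec f\times\vec g=\tfrac{\lambda}{2}(2\gamma,-\beta,2\alpha)$ exactly when $\J_{(f,g)}=\lambda\J$.
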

\begin{proof}By the definition of Jacobian determinant, we have
\[ 2\J_{(f,g),0}f_2 - \J_{(f,g),1}f_1 + 2\J_{(f,g),2}f_0 = 0 = 2\J_{(f,g),0}g_2 - \J_{(f,g),1}g_1 + 2\J_{(f,g),2}g_0,\]
and $f,g$ being non-proportional implies $\J_{(f,g)}(x,y) \neq 0$. Thus, if $\J_{(f,g)}(x,y)$ is proportional to $\J(x,y)$, then $f,g\in W_\J(\bR)$ by (\ref{W char}). Conversely, if $f,g\in W_\J(\bR)$, then by (\ref{W char}) we have
\[ 2\gamma f_2 - \beta f_1 + 2\alpha f_0 = 0 = 2\gamma g_2 - \beta g_1 + 2\alpha g_0.\]
A straightforward computation shows that
\begin{align*}
    \alpha \J_{(f,g),1} = 2\alpha (f_2g_0 - f_0g_2) &= \beta(f_2g_1 - f_1g_2) = \beta\J_{(f,g),2},\\
    \alpha\J_{(f,g),0} = \alpha(f_1g_0 - f_0g_1) & = \gamma (f_2g_1 - f_1g_2) = \gamma \J_{(f,g),2},\\
    \beta\J_{(f,g),0} = \beta(f_1g_0 - f_0g_1) & = 2\gamma(f_2g_0 - _0g_2) = \gamma\J_{(f,g),1},
\end{align*}
which implies that $\J_{(f,g)}(x,y)$ and $\J(x,y)$ are proportional.
\end{proof}

In \cite[(3-1) and (3-2)]{TX}, we observed that
\[\Psi_V:a_4x^4+a_3x^3y+a_2x^2y^2+a_1xy^3+a_0y^4\mapsto\begin{cases}(a_4,a_3,a_2) &\mbox{if $\alpha\ne0$}\\ (a_4,a_2,a_0) &\mbox{if $\alpha=0$}\end{cases}\]
defines an isomorphism of vector spaces from $V_\J(\bR)$ to $\bR^3$, and the image of $V_{\J}(\bZ)$ under this map is a sublattice $\Lambda_\J$ of $\bZ^3$. Similarly, the characterization (\ref{W char}) implies that
\[\Psi_W:\varphi_2x^2 + \varphi_1xy + \varphi_0y^2 \mapsto\begin{cases}
 (\varphi_2,\varphi_1) &\mbox{if $\alpha\neq0$}\\
 (\varphi_2,\varphi_0)&\mbox{if $\alpha=0$}
\end{cases}\]
defines an isomorphism of vector spaces from $W_{\J}(\bR)$ to $\bR^2$, and the image of $W_\J(\bZ)$ under this map is a sublattice $\L_\J$ of $\bZ^2$. The key to the proof of Theorem~\ref{h(f,g)} is the observation that any $\bZ$-basis $(f,g)$ of $W_\J(\bZ)$ may be lifted to a $\bZ$-basis $(f^2,fg,g^2)$ of $V_\J(\bZ)$. We shall prove this by comparing determinants. Put $n_\beta=2$ if $\beta$ is odd, and $n_\beta=1$ if $\beta$ is even.

\begin{lemma}\label{V lemma}We have $\det(\Lambda_\J) = (n_\beta\alpha)^3$ if $\alpha\neq0$, and $\det(\Lambda_\J) = (n_\beta\beta/2)^3$ if $\alpha=0$.
\end{lemma}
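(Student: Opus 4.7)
The plan is to mimic the proof of Lemma~\ref{W lemma}, now with one additional coordinate. First I will unpack the invariance condition $F_{M_\J} = F$, where $M_\J = \left(\begin{smallmatrix} \beta & 2\gamma \\ -2\alpha & -\beta\end{smallmatrix}\right)$ and the twisted action is as in (\ref{action}), as a rank-two linear system in the five coefficients $(a_4, a_3, a_2, a_1, a_0)$ of $F$. When $\alpha \neq 0$, this system expresses $(a_1, a_0)$ as rational linear functions of $(a_4, a_3, a_2)$; when $\alpha = 0$, it expresses $(a_3, a_1)$ in terms of $(a_4, a_2, a_0)$. Thus $\Lambda_\J$ consists of those integer triples in the chosen coordinates for which the two dependent coefficients also come out integral.

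Next, I will use the product formula $\det(\Lambda_\J) = \prod_p \det(\Lambda_{\J,p})$ for $\Lambda_{\J,p} = \bZ_p \otimes_\bZ \Lambda_\J$, and at each prime $p$ write down an explicit $\bZ_p$-basis of $\Lambda_{\J,p}$ by case analysis on the $p$-adic valuations of $\alpha$ and $\beta$ (using $\gcd(\alpha,\beta,\gamma)=1$ to constrain these). By direct comparison with the calculation in Lemma~\ref{W lemma}, each local factor should turn out to be the cube of the corresponding factor appearing there: namely $\det(\Lambda_{\J,p}) = p^{3v_p(\alpha)}$ at odd $p$ when $\alpha \neq 0$, together with an extra $n_\beta^3$ correction at $p = 2$ when $\beta$ is odd. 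Taking the product yields the claimed formula $\det(\Lambda_\J) = (n_\beta\alpha)^3$, and the $\alpha = 0$ case follows by an entirely analogous argument. The boundary cases $\beta = 0$ and $\alpha = 0$ can be disposed of first by exhibiting a global $\bZ$-basis of $\Lambda_\J$; for instance, when $\beta = 0$ and $\alpha \neq 0$, the forms $f = \alpha x^2 - \gamma y^2$ and $g = xy$ lie in $W_\J(\bZ)$, and the $\Psi$-images $\Psi(f^2) = (\alpha^2, 0, -2\alpha\gamma)$, $\Psi(fg) = (0, \alpha, 0)$, $\Psi(g^2) = (0, 0, 1)$ are readily verified to form a $\bZ$-basis of $\Lambda_\J$ with determinant $\alpha^3$.

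The main obstacle will be the case $p = 2$ with both $\alpha$ and $\beta$ nonzero, where the factor of $2$ appearing in $M_\J$ interacts delicately with $v_2(\alpha), v_2(\beta), v_2(\gamma)$ to produce the $n_\beta$ correction. This is exactly the bookkeeping already performed at the end of the proof of Lemma~\ref{W lemma} (with the $\epsilon_p$ corrections), and should go through here in close parallel, with the relevant exponent systematically tripled.
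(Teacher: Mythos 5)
Your strategy is sound and would yield the lemma, but it follows a genuinely different route from the paper. For the main case $\alpha\neq 0$ the paper does no computation at all: it simply cites \cite[Theorem 4.3 (b)]{TX} for the identity $\det(\Lambda_\J)=(n_\beta\alpha)^3$. Only the case $\alpha=0$ is treated directly, and there the paper does not localize; it quotes the explicit congruence description of $\Lambda_\J$ from \cite[(3.2)]{TX} and exhibits a single global $\bZ$-basis $\{((n_\beta\beta/2)^2,\gamma^2 n_\beta^2,0),(0,n_\beta\beta/2,0),(0,0,1)\}$, much as you propose to do for your boundary cases. By contrast, you re-derive the congruence conditions from $F_{M_\J}=F$ and then run the prime-by-prime analysis of Lemma~\ref{W lemma} with the exponents tripled. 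What your approach buys is self-containedness: the reader need not consult \cite{TX}, and the parallel with Lemma~\ref{W lemma} makes the appearance of the cube transparent. What it costs is that the entire burden of the proof now sits in the assertion that $\det(\Lambda_{\J,p})=\det(\L_{\J,p})^3$ at every prime, which you flag as "should turn out to be" but do not verify; this is the whole content of the lemma in the case $\alpha,\beta\neq 0$, so a complete write-up would have to display the local bases of $\Lambda_{\J,p}$ explicitly (as Lemma~\ref{W lemma} does for $\L_{\J,p}$), with the $p=2$ bookkeeping you correctly identify as the delicate point. Your global check in the case $\beta=0$, $\alpha\neq0$ is correct: the invariance condition there reduces to $\alpha^2\mid a_4$ and $\alpha\mid a_3$, and your three vectors do span that lattice with determinant $\alpha^3$.
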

\begin{proof}For $\alpha \neq 0$, see \cite[Proposition 3.4]{TX}. For $\alpha=0$, this is easy to verify. 
\end{proof}

\begin{lemma}\label{W lemma}We have $\det(\L_\J) = n_\beta\alpha$ if $\alpha\neq0$, and $\det(\L_\J)=n_\beta\beta/2$ if $\alpha=0$.
\end{lemma}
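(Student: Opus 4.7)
The plan is to compute the index $[\bZ^2 : \L_\J]$ directly from the defining linear equation of $W_\J(\bR)$ given by (\ref{W char}), namely $2\gamma\varphi_2 - \beta\varphi_1 + 2\alpha\varphi_0 = 0$. Since $\Phi$ drops different coordinates in the two cases, I would handle them separately.

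\textbf{Case $\alpha \neq 0$.} Here $\varphi_0$ is determined by $(\varphi_2,\varphi_1)$ via $\varphi_0 = (\beta\varphi_1 - 2\gamma\varphi_2)/(2\alpha)$, and the integrality of $\varphi$ translates into the single congruence $\beta\varphi_1 - 2\gamma\varphi_2 \equiv 0 \pmod{2\alpha}$. Thus $\L_\J$ is the kernel of the homomorphism $\bZ^2 \to \bZ/2\alpha\bZ$, $(\varphi_2,\varphi_1) \mapsto \beta\varphi_1 - 2\gamma\varphi_2$, and its index in $\bZ^2$ equals the order of the image, which is the cyclic subgroup generated by $\gcd(\beta,2\gamma)$ in $\bZ/2\alpha\bZ$. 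This order is $2|\alpha|/\gcd(\beta,2\gamma,2\alpha)$.

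\textbf{Case $\alpha = 0$.} Since $\Delta(\J) = \beta^2 \neq 0$ we have $\beta \neq 0$, and primitivity forces $\gcd(\beta,\gamma)=1$. The constraint simplifies to $\beta\varphi_1 = 2\gamma\varphi_2$, so $\varphi_0$ is free while $(\varphi_2,\varphi_1)$ must satisfy $\beta \mid 2\gamma\varphi_2$; using $\gcd(\beta,\gamma)=1$ this collapses to $\beta \mid 2\varphi_2$, i.e.\ $\varphi_2 \in (\beta/\gcd(\beta,2))\bZ$. Hence under $\Phi: \varphi \mapsto (\varphi_2,\varphi_0)$ one has $\L_\J = (\beta/\gcd(\beta,2))\bZ \times \bZ$, of index $\beta/\gcd(\beta,2)$ in $\bZ^2$.

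\textbf{Finishing via parity of $\beta$.} In both cases the answer hinges on extracting $\gcd(\beta,2\gamma,2\alpha)$ (resp.\ $\gcd(\beta,2)$) from the primitivity hypothesis $\gcd(\alpha,\beta,\gamma)=1$. If $\beta$ is odd, the gcd is odd, so $\gcd(\beta,2\gamma,2\alpha) = \gcd(\beta,\gamma,\alpha) = 1$; if $\beta$ is even, then $\gcd(\beta,2\gamma,2\alpha) = 2\gcd(\beta/2,\gamma,\alpha)$, and the inner gcd divides $\gcd(\alpha,\beta,\gamma) = 1$, so equals $2$. Substituting these into the index formulas of each case and comparing with $n_\beta$ yields $|\det(\L_\J)| = n_\beta|\alpha|$ when $\alpha \neq 0$ and $|\det(\L_\J)| = n_\beta|\beta|/2$ when $\alpha = 0$.

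The only real obstacle is keeping the parity bookkeeping straight; the primitivity of $\J$ is what makes the gcd calculation collapse cleanly, and once one commits to the direct ``solve and count congruence classes'' approach the rest is mechanical.
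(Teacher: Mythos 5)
Your proof is correct, and it takes a genuinely different route from the paper's. The paper computes $\det(\L_\J)$ by exhibiting explicit $\bZ$-bases in the special cases ($\alpha\neq0,\beta=0$ and $\alpha=0$) and, for $\alpha\beta\neq0$, by localizing at each prime $p$ and producing a $\bZ_p$-basis of $\L_{\J,p}$ in three subcases depending on $v_p(\alpha)$ and $v_p(\beta)$, then multiplying the local determinants. You instead compute the index $[\bZ^2:\L_\J]$ globally in one stroke: $\L_\J$ is the kernel of $(\varphi_2,\varphi_1)\mapsto \beta\varphi_1-2\gamma\varphi_2$ into $\bZ/2\alpha\bZ$, so the index is the order of the image, namely $2|\alpha|/\gcd(\beta,2\gamma,2\alpha)$, and the primitivity hypothesis collapses that gcd to $2/n_\beta$; the $\alpha=0$ case is an even easier divisibility count. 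Your gcd bookkeeping is right in both parities, and notably you use only the actual hypothesis $\gcd(\alpha,\beta,\gamma)=1$, whereas the paper's proof asserts (and does not need) that $\alpha,\beta,\gamma$ are pairwise coprime. What your approach gives up is the explicit local bases (which the paper does not reuse anyway); what it buys is the elimination of all case analysis on $\beta=0$ and on primes. The only cosmetic discrepancy is that you produce $|\det(\L_\J)|=n_\beta|\alpha|$ rather than the paper's signed expression, which is just a convention for the covolume of a lattice and not a gap.
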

\begin{proof}By definition and (\ref{W char}), we have
\[ \L_\J =\begin{cases}\{(\varphi_2,\varphi_1)\in\bZ^2: 2\gamma\varphi_2 -\beta\varphi_1 \equiv 0 \mbox{ (mod $2\alpha$)}\}&\mbox{if $\alpha\neq0$},\\
\{(\varphi_2,\varphi_0)\in\bZ^2:  2\gamma \varphi_2 \equiv 0\mbox{ (mod $\beta$})\} &\mbox{if $\alpha=0$},\end{cases}\]
and recall that $\alpha,\beta,\gamma$ are pairwise coprime. We easily verify that
\[\begin{cases}\{(\alpha,0),(0,1)\} &\mbox{if $\alpha\neq0$ and $\beta=0$}\\
\{(n_\beta\beta/2,0),(0,1)\}&\mbox{if $\alpha=0$}
\end{cases}\]
is a $\bZ$-basis for $\L_\J$, so $\det(\L_\J)$ is as stated in these two cases. Next, suppose that $\alpha,\beta\neq0$, and we shall use the fact that
\[ \det(\L_\J) = \prod_{p}\det(\L_{\J,p}),\mbox{ where }\L_{\J,p} = \bZ_p\otimes_\bZ\L_\J.\]
For any prime $p$, write $\alpha = p^ka$ and $\beta = p^\ell b$, where $k,\ell\geq 0$ and $a,b$ are coprime to $p$. Then
\[ \L_{\J,p} = \{(\varphi_2,\varphi_1)\in\bZ_p^2 : 2\gamma\varphi_2 - p^\ell b\varphi_1 \equiv 0\mbox{ (mod $p^{k+\ep_p}$)}\},\]
where $\ep_p = 0$ for $p$ odd and $\ep_2=1$.
It is not hard to verify that
\[\begin{cases}
\{(1,2\gamma b^{-1}),(0,p^{k+\ep_p})\} &\mbox{if $\ell=0$}\\
\{(p^k,0),(0,1)\}&\mbox{if }\ell\geq1\AND\ell\geq k +\ep_p\\
\{(p^{\ell - \ep_p},2p^{-\ep_p}\gamma b^{-1}),(0,p^{k-\ell + \ep_p})\}&\mbox{if }\ell\geq1\AND\ell\leq k+\ep_p
\end{cases}\]
is a $\bZ_p$-basis for $\L_{\J,p}$. Hence, we have $\det(\L_{\J,p})=p^{k}$ for $p$ odd while $\det(\L_{\J,2})=n_\beta2^{k}$. This shows that $\det(\L_\J) = n_\beta\alpha$, as claimed.
\end{proof}

Lemma~\ref{W lemma} implies the following characterization of primitivity:

\begin{proposition}\label{primitive char}For any non-proportional $f,g\in W_\J(\bZ)$, the following are equivalent.
\begin{enumerate}[(i)]
\item The pair $(f,g)$ is primitive.
\item The pair $(f,g)$ is a $\bZ$-basis for $W_\J(\bZ)$.
\item We have $(\J_{(f,g),2},\J_{(f,g),1}) = \pm(n_\beta\alpha, n_\beta\beta)$.
\item We have $\J_{(f,g)}(x,y) = \pm n_\beta \J(x,y)$.
\end{enumerate} 
\end{proposition}
\begin{proof}By Proposition~\ref{Jfg lemma}, we know that $\J_{(f,g)}(x,y)$ and $\J(x,y)$ are proportional. Since $\J(x,y)$ is primitive, we have $\J_{(f,g)}(x,y) = \pm n \J(x,y)$ for some $n\in\bN$. It follows that
\[ \gcd\left(\J_{(f,g),2}, \J_{(f,g),1}/2, \J_{(f,g),0}\right) = \gcd(n\alpha, n\beta/2, n\gamma) = n/n_\beta,\]
and so $(f,g)$ is primitive precisely when $n = n_\beta$. This gives the equivalence between (i) and (iv), and that between (iv) and (iii) is obvious. Let $\L_{(f,g)}$ be the sublattice of $\L_\J$ spanned by $\Phi_W(f)$ and $\Phi_W(g)$ over $\bZ$. Then $(f,g)$ is a $\bZ$-basis of $W_\J(\bZ)$ exactly when $\det(\L_{(f,g)}) = \det(\L_\J)$. But
\[\det(\L_{(f,g)}) = \det\begin{pmatrix}\Psi_W(f) \\ \Psi_W(g) \end{pmatrix} = \begin{cases}
\J_{(f,g),2} & \mbox{if }\alpha \neq 0\\
\J_{(f,g),1}/2 & \mbox{if }\alpha = 0
\end{cases}\]
so the equivalence between (iii) and (ii) now follows from Lemma~\ref{W lemma}.
\end{proof}

\begin{proposition}\label{lift basis}For any $f,g\in W_\J(\bZ)$, if $(f,g)$ is a $\bZ$-basis of $W_\J(\bZ)$, then $(f^2,fg,g^2)$ is a $\bZ$-basis of $V_\J(\bZ)$.
\end{proposition}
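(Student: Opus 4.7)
The plan is to verify that $\Lambda' := \bZ\{f^2, fg, g^2\}$ sits inside $V_\J(\bZ)$ as a full-rank sublattice with matching determinant, forcing $\Lambda' = V_\J(\bZ)$. The inclusion is immediate: the twisted action is multiplicative on products of forms, so for $\varphi, \psi \in W_\J(\bR)$ we have $(\varphi\psi)_{M_\J} = \varphi_{M_\J}\psi_{M_\J} = (-\varphi)(-\psi) = \varphi\psi$, placing $f^2, fg, g^2$ in $V_\J(\bR)$; integrality of the coefficients is clear. Since $(f,g)$ is a $\bZ$-basis of a rank-$2$ lattice, $f$ and $g$ are $\bR$-linearly independent, hence the ratio $f/g$ is a non-constant rational function that cannot satisfy a non-trivial polynomial relation, making $f^2, fg, g^2$ linearly independent in $V_\J(\bR)$. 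So $\Lambda'$ has rank $3$.

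The crux is to compare determinants inside $\bZ^3$. Writing $a = f_2, b = f_1, c = f_0$ and $a' = g_2, b' = g_1, c' = g_0$, let $M$ be the $3 \times 3$ matrix with rows $\Psi(f^2), \Psi(fg), \Psi(g^2)$. When $\alpha \neq 0$ (so $\Psi$ records $(a_4, a_3, a_2)$), a direct cofactor expansion collapses to the polynomial identity
\[
\det M \;=\; (ab' - a'b)^3 \;=\; \J_{(f,g),2}^{\,3}.
\]
By Proposition~\ref{primitive char}(iii), $\J_{(f,g),2} = \pm n_\beta \alpha$ for a $\bZ$-basis of $W_\J(\bZ)$, so $|\det M| = (n_\beta|\alpha|)^3 = |\det \Lambda_\J|$ by Lemma~\ref{V lemma}, forcing $\Lambda' = V_\J(\bZ)$. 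When $\alpha = 0$ (so $\Psi$ records $(a_4, a_2, a_0)$), the analogous expansion yields
\[
\det M \;=\; (ac' - a'c)^3 \,+\, (ab' - a'b)(ac' - a'c)(bc' - b'c).
\]
The second term vanishes automatically: by Proposition~\ref{Jfg lemma}, $\J_{(f,g)}$ is proportional to $\J$, which has zero $x^2$-coefficient, forcing $\J_{(f,g),2} = ab' - a'b = 0$. Hence $\det M = (\J_{(f,g),1}/2)^3$, and Proposition~\ref{primitive char}(iii) together with Lemma~\ref{V lemma} again gives $|\det M| = (n_\beta|\beta|/2)^3 = |\det \Lambda_\J|$.

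The principal obstacle is the polynomial identity for $\det M$ in each of the two cases. Although elementary, it involves nine entries and a nontrivial cascade of cancellations into cubes, and the vanishing seen in the $\alpha = 0$ case is what connects the formula cleanly to $\J_{(f,g),1}$. One might try to avoid the direct computation by invoking $\det(S^2 T) = (\det T)^3$ for $T \in \GL_2(\bZ)$, reducing to a single reference $\bZ$-basis of $W_\J(\bZ)$; however, Lemma~\ref{W lemma} does not exhibit a uniform global basis when $\alpha \neq 0$ and $\beta \neq 0$, and passing to $p$-local bases re-introduces case analysis that the direct identity avoids.
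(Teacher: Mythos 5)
Your proposal is correct and follows essentially the same route as the paper: verify the inclusion $f^2,fg,g^2\in V_\J(\bZ)$ via multiplicativity of the twisted action, then compare $\det(\Lambda_{(f,g)})$ with $\det(\Lambda_\J)$ using Proposition~\ref{primitive char}(iii) and Lemma~\ref{V lemma}. The only difference is that you make the determinant identity explicit (including the extra term that vanishes when $\alpha=0$ because $\J_{(f,g),2}$ is proportional to $\alpha$), where the paper simply cites a direct calculation.
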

\begin{proof}Let $\Lambda_{(f,g)}$ be the sublattice of $\Lambda_\J$ spanned by $\Psi_V(f^2),\Psi_V(fg)$ and $\Psi_V(g^2)$ over $\bZ$. Then $(f^2,fg,g^2)$ is a $\bZ$-basis of $V_\J(\bZ)$ precisely when $\det(\Lambda_{(f,g)}) = \det(\Lambda_\J)$. We have
\[ \det(\Lambda_{(f,g)}) =  \det\begin{pmatrix} \Psi_V(f^2) \\ \Psi_V(fg) \\ \Psi_V(g^2) \end{pmatrix} =\begin{cases} (\J_{(f,g),2})^3 & \mbox{if $\alpha\neq0$}\\ (\J_{(f,g),1}/2)^3 & \mbox{if $\alpha=0$}\end{cases}\]
by a direct calculation. Hence, if $(f,g)$ is a $\bZ$-basis of $W_\J(\bZ)$, then we deduce from  Lemma~\ref{V lemma} and Proposition~\ref{primitive char} (iii) that $\det(\Lambda_{(f,g)}) = \det(\Lambda_\J)$, whence the claim.
\end{proof}

\begin{proof}[Proof of Theorem~\ref{h(f,g)}] Let $F$ be an integral binary quartic form of non-zero discriminant.  First, suppose that $F\in V_{\J}(\bZ)$ and let $(f,g)$ be a $\bZ$-basis of $W_\J(\bZ)$. Then, there is an integral binary quadratic form $h$ such that $F = h(f,g)$  by Proposition~\ref{lift basis}. Notice that  $(f,g)$ is primitive by Proposition~\ref{primitive char} and $\J_{(f,g)}(x,y)$ is proportional to $\J(x,y)$ by Proposition~\ref{Jfg lemma}.\\\\
Conversely, suppose that $F = h(f,g)$ for some integral binary quadratic forms $h,f,g$ such that $\J_{(f,g)}(x,y)$ is proportional to $\J(x,y)$. Since $\Delta(F)\neq0$ implies that $f$ and $g$ are non-proportional, we have $f,g\in W_\J(\bZ)$ by Proposition~\ref{Jfg lemma}. It follows that $F_{M_\J} = F$ and so $F\in V_\J(\bZ)$. \\ \\
Recall the notation from Definition~\ref{con}, and note that the coefficients of $F = h(f,g)$ are given as in (\ref{ai}) below. Using the standard formula for the discriminant of a binary quartic form, we computed in SageMath \cite{Sage} that
\[\label{DiscF} \Delta(F) = \Delta(h(f,g)) = (h_1^2 - 4h_2h_0)^2 H_1^2H_2 = (\Delta(h) H_1)^2 H_2,\]
where $H_2$ is some integer, and
\begin{align*} H_1 & = f_2^2 g_0^2 - f_2 f_1g_1 g_0 + f_2 f_0 g_1^2 + f_1^2 g_2 g_0 - 2 f_2 f_0 g_2 g_0 - f_1 f_0 g_2 g_1 + f_0^2 g_2^2 \\
& = (f_2 g_0 - f_0 g_2)^2 - (f_2 g_1 - f_1 g_2)(f_1 g_0 - f_0 g_1) \end{align*}
is equal to $\Delta(\J_{(f,g)})/4$. This shows that indeed $(\Delta(h)\Delta(\J_{(f,g)})/4)^2$ divides $\Delta(F)$.\\\\
Finally, suppose that $F = h(f,g)$ and $F= h'(f',g')$ are two such representations of $F$ with $(f,g)$ and $(f',g')$ primitive. By Proposition~\ref{primitive char} (ii), there is $T\in \GL_2(\bZ)$ such that $(f',g') =\mbox{}_{T}(f,g)$. But then $h(f,g) = h_T'(f,g)$, and by Proposition~\ref{lift basis} the triplet $(f^2,fg,g^2)$ is linearly independent. Thus, we have $h = h'_T$, whence $h$ and $h'$ are $\GL_2(\bZ)$-equivalent.
\end{proof}

\subsection{Proof of Theorem~\ref{necessary thm'}}

Let $F\in V_\J(\bZ)$ be irreducible and write $F = h(f,g)$ as given by Theorem~\ref{h(f,g)}. For brevity, put
\[ k = \Delta(h),\,\ \J_{(f,g),1}' = \J_{(f,g),1}/2,\AND d = \gcd\left(\J_{(f,g), 2},\J_{(f,g),1}', \J_{(f,g),0}\right).\]
By a direct computation, the coefficients of $F$ are given by
\begin{equation} \label{ai}\begin{cases}a_4= h_2f_2^2 + h_1f_2g_2 + h_0g_2^2,\\
a_3= 2h_2f_2f_1 + h_1(f_2g_1 + f_1g_2) + 2h_0g_2g_1,\\
a_2= h_2(2f_2f_0 + f_1^2) + h_1(f_2g_0 + f_1g_1 + f_0g_2) + h_0(2g_2g_0 + g_1^2),\\
a_1= 2h_2f_1f_0 + h_1(f_1g_0 + f_0g_1) + 2h_0g_1g_0,\\
a_0= h_2f_0^2 + h_1f_0g_0 + h_0g_0^2,\end{cases}\end{equation}
and $F$ may be factored over $\bC$ as
\[\textstyle F = h_2\left(f - \frac{-h_1 + \sqrt{k}}{2h_2}g\right)\left(f - \frac{-h_1 - \sqrt{k}}{2h_2}g\right).\]
Let us write the first factor in parentheses as $Ax^2 + Bxy + Cy^2$, where explicitly
\[\begin{cases}
A = A_1 + A_2\sqrt{k}\\ B = B_1 + B_2\sqrt{k},\\C = C_1 + C_2\sqrt{k},
\end{cases}\AND\hspace{1em}
\begin{cases}
A_1 = \frac{2h_2f_2 + h_1g_2}{2h_2}\mbox{ and }A_2 = \frac{-g_2}{2h_2},\\[1.5ex]
B_1 = \frac{2h_2f_1 + h_1g_1}{2h_2}\mbox{ and }B_2 = \frac{-g_1}{2h_2},\\[1.5ex]
C_1 = \frac{2h_2f_0 + h_1g_0}{2h_2}\mbox{ and }C_2 = \frac{-g_0}{2h_2}.
\end{cases}\]
Without loss of generality, take $\theta_F = (-B + \sqrt{B^2 - 4AC})/(2A)$ and put $L_F = \bQ(\theta_F)$.\\

Consider an element in $Q_F$, say
\begin{equation}\label{element x} x = c_0 + c_1(a_4\theta_F) + c_2(a_4\theta_F^2+a_3\theta_F) + c_3(a_4\theta_F^3 + a_3\theta_F^2 + a_2\theta_F),\end{equation}
where $c_0,c_1,c_2,c_3\in\bZ$. We shall write
\[ x - c_0= x_1 + x_2 \sqrt{B^2 - 4AC}\AND x_2 = \frac{1}{2A^3}\left(y_1 + y_2\sqrt{k}\right),\]
where $x_1,x_2\in K$ and $y_1,y_2\in\bQ$. Explicitly, we have
\begin{align*}
x_1 & = \frac{-B}{2A}(c_1a_4 + c_2a_3 + c_3a_2) + \frac{B^2 - 2AC}{2A^2}(c_2a_4 + c_3a_3) + \frac{B(-B^2 + 3AC)}{2A^3}(c_3a_4),\\[0.5ex]
x_2 & =\frac{1}{2A}(c_1a_4 + c_2a_3 + c_3a_2) - \frac{B}{2A^2}(c_2a_4 + c_3a_3) + \frac{B^2-AC}{2A^3}(c_3a_4),
\end{align*}
and also
\begin{align*} y_1 &=(A_1^2 + kA_2^2)(c_1a_4 + c_2a_3 + c_3a_2)\\ & \hspace{0.5cm}- (A_1B_1 + kA_2B_2)(c_2a_4 + c_3a_3) + (B_1^2 + kB_2^2- A_1C_1 - kA_2C_2)(c_3a_4),\\
y_2& =  (2A_1A_2)(c_1a_4 + c_2a_3 + c_3a_2) \\&\hspace{0.5cm} - (A_1B_2 + A_2B_1)(c_2a_4 + c_3a_3)+ (2B_1B_2 - A_1C_2 - A_2C_1)(c_3a_4).\end{align*}
Now, we know that $Q_F$ is an order in $L$ by \cite{Nakagawa}. Clearly $K = \bQ(\sqrt{k})$ is a quadratic subfield of $L$. Let $\mathcal{O}_L$ and $\mathcal{O}_K$, respectively, denote the rings of integers in $L$ and $K$. Then, we have
\begin{equation}\label{maximal iff} \left(Q_F\mbox{ is maximal}\right)\iff \left(Q_F = \mathcal{O}_L\right) \implies \left(Q_F\cap K = \mathcal{O}_K\right).\end{equation}
Below, we shall determine an explicit $\bZ$-basis of $Q_F\cap K$, namely 
\begin{equation}\label{QF cap K}  Q_F\cap K = \bZ\oplus\bZ\tau,\mbox{ where }\tau=\frac{\Delta(\J_{(f,g)})}{4d}\cdot\frac{-h_1 + \sqrt{k}}{2},\end{equation}
which then allows us to prove Theorem~\ref{necessary thm'}. The key is the next lemma:

\begin{lemma}\label{in K lem}Let $x\in Q_F$ be as in (\ref{element x}). Then, we have $x\in K$ if and only if 
\begin{equation}\label{in K relation}c_2\J_{(f,g),2} + c_3\J_{(f,g),1}' =  - c_1\J_{(f,g),2} + c_3\J_{(f,g),0} = c_1\J_{(f,g),1}' + c_2\J_{(f,g),0} =  0.\end{equation}
Moreover, in this case, we have
\[x-c_0= \begin{cases}\dfrac{c_3}{\J_{(f,g),2}}\left((s_1\J_{(f,g),1}' - s_2\J_{(f,g),2}) - \dfrac{\Delta(\J_{(f,g)})}{4}\left(\dfrac{-h_1 + \sqrt{k}}{2}\right)\right) & \mbox{if $\J_{(f,g),2}\neq0$},\\[2.75ex]
\dfrac{-c_2}{\J_{(f,g),1}'}\left((s_1\J_{(f,g),1}' - s_2\J_{(f,g),2}) - \dfrac{\Delta(\J_{(f,g)})}{4}\left(\dfrac{-h_1 + \sqrt{k}}{2}\right)\right)&\mbox{if $\J_{(f,g),1}'\neq0$},
\end{cases}\]
where $s_1,s_2\in\bZ$, and in particular $x\in \bZ\oplus\bZ\tau$.
\end{lemma}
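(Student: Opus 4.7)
My plan is to exploit the factorization (\ref{F factor fg}), which realizes $\theta$ as a root of the $K$-quadratic $A\xi^2 + B\xi + C$ over $K = \bQ(\sqrt{k})$, so that $\{1,\theta\}$ is a $K$-basis of $L$. Writing $\omega = (-h_1 + \sqrt{k})/(2h_2)$, I have $A = f_2 - \omega g_2$, $B = f_1 - \omega g_1$, $C = f_0 - \omega g_0$ by (\ref{ABC}), and $\omega$ satisfies $h_2\omega^2 + h_1\omega + h_0 = 0$. From $A\theta^2 = -B\theta - C$ one gets $\theta^2 = -(B/A)\theta - C/A$ and $\theta^3 = ((B^2 - AC)/A^2)\theta + BC/A^2$. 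Substituting into (\ref{element x}) yields
\[
x - c_0 \;=\; \frac{C(B\rho - A\nu)}{A^2} + \frac{A^2\mu - AB\nu + (B^2 - AC)\rho}{A^2}\,\theta,
\]
where $\mu = c_1 a_4 + c_2 a_3 + c_3 a_2$, $\nu = c_2 a_4 + c_3 a_3$, and $\rho = c_3 a_4$. Hence $x \in K$ is equivalent to the single $K$-equation $A^2\mu - AB\nu + (B^2 - AC)\rho = 0$.

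Next I would expand this equation as a polynomial of degree $2$ in $\omega$ and reduce via $\omega^2 = -(h_1\omega + h_0)/h_2$ to a $\bQ$-linear form $P_0(c_1,c_2,c_3) + \omega P_1(c_1,c_2,c_3)$, whose vanishing forces $P_0 = P_1 = 0$. Expanding each $P_j$ as a combination of $h_2,h_1,h_0$ and using the formulas (\ref{ai}) for the $a_i$, a direct calculation collapses the two conditions into the three relations of (\ref{in K relation}); these are exactly the three $2\times 2$-minors of $\left(\begin{smallmatrix} c_1 & c_2 & c_3 \\ \J_{(f,g),0} & -\J_{(f,g),1}' & \J_{(f,g),2} \end{smallmatrix}\right)$, and since the second row is nonzero (as $\Delta(F)\ne 0$ forces $f,g$ non-proportional), any two of them imply the third.

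For the explicit formula, I would assume $\J_{(f,g),2} \ne 0$ and substitute the parametrization $c_1 = c_3\J_{(f,g),0}/\J_{(f,g),2}$, $c_2 = -c_3\J_{(f,g),1}'/\J_{(f,g),2}$ into $C(B\rho - A\nu)/A^2$. Expanding $A,B,C$ in $\omega$ and using $a_4 = h_2 A\bar A$ (read off from (\ref{ai}) and (\ref{ABC})) cancels the $A^2$ denominator; grouping by the $\bQ$-basis $\{1,\sqrt k\}$ identifies the rational part as $s_1\J_{(f,g),1}' - s_2\J_{(f,g),2}$ and the $\sqrt k$-part as $\Delta(\J_{(f,g)})/4 = (\J_{(f,g),1}')^2 - \J_{(f,g),2}\J_{(f,g),0}$ times $(-h_1+\sqrt k)/2$, which is the stated closed form; the $\J_{(f,g),1}'\ne 0$ case is symmetric. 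To conclude $x - c_0 \in \bZ \oplus \bZ\tau$, the proportionality of $(c_1,c_2,c_3)$ with $(\J_{(f,g),0},-\J_{(f,g),1}',\J_{(f,g),2})$ together with $(c_1,c_2,c_3)\in\bZ^3$ forces $c_3/\J_{(f,g),2} = m/d$ for some $m\in\bZ$, so the $\tau$-coefficient $-c_3 d/\J_{(f,g),2} = -m$ is integral; the remaining rational part $(m/d)(s_1\J_{(f,g),1}' - s_2\J_{(f,g),2})$ is also integral because $d$ divides both $\J_{(f,g),1}'$ and $\J_{(f,g),2}$. The hard part will be the middle step: pinning down $P_0,P_1$ in terms of the $a_i$ and then collapsing them into the Jacobian relations requires heavy bookkeeping, as does verifying the tidy form featuring $s_1,s_2$ in the last step — but both reductions are deterministic once one separates contributions by powers of $\omega$ and by the monomials in $h_2,h_1,h_0$.
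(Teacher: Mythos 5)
Your proposal is correct and follows essentially the same route as the paper: both reduce $x\in K$ to the vanishing of the coefficient of the nontrivial basis element of $L$ over $K$ (your $\theta$-coefficient $A^2\mu-AB\nu+(B^2-AC)\rho$ equals $2A^3$ times the paper's $x_2$), split that into two rational linear conditions on $(c_1,c_2,c_3)$ via the basis $\{1,\omega\}$ (the paper uses $\{1,\sqrt{k}\}$ and an elimination matrix $(\eta_{ij})$ in place of your direct expansion), identify them with the three Jacobian relations by a symbolic computation, and then substitute the resulting parametrization of $(c_1,c_2,c_3)$ back into the constant term to get the closed form and the integrality in $\bZ\oplus\bZ\tau$. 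The deferred "heavy bookkeeping" is exactly the computation the paper delegates to Sage, so the two proofs are at the same level of detail; the only slip is the side remark that any two of the three minors imply the third, which requires knowing which of $\J_{(f,g),2},\J_{(f,g),1}'$ is nonzero, but this is not load-bearing.
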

\begin{proof}Clearly $x\in K$ if and only if $x_2=0$, which is equivalent to $y_1 = y_2 = 0$. We may define 
\[\begin{cases}
\eta_{11} = 2A_1A_2a_4,\\
\eta_{12} = (A_1^2 + kA_2^2)a_4,\\
\eta_{21} = 2A_1A_2a_3 - (A_1B_2 + A_2B_1)a_4,\\
\eta_{22} = (A_1^2 + kA_2^2)a_3 - (A_1B_1 + kA_2B_2)a_4,\\
\eta_{31} = 2A_1A_2a_2 - (A_1B_2 + A_2B_1)a_3 + (2B_1B_2 - A_1C_2 - A_2C_1)a_4,\\
\eta_{32} = (A_1^2 + kA_2^2)a_2 - (A_1B_1 + kA_2B_2)a_3 + (B_1^2 + kB_2^2 - A_1C_1 - kA_2C_2)a_4,
\end{cases}\]
so that $c_i$ is eliminated in $\eta_{i1} y_1 - \eta_{i2}y_2$ for each $i=1,2,3$. 
In SageMath \cite{Sage}, we computed that
\[\begin{bmatrix} \eta_{11} & -\eta_{12} \\ \eta_{21} & -\eta_{22} \\ \eta_{31} & -\eta_{32}\end{bmatrix}\begin{bmatrix} y_1 \\ y_2\end{bmatrix} 
= -\frac{a_4^3}{2h_2^2}\begin{bmatrix}\hspace{3mm}c_2\J_{(f,g),2} + c_3\J_{(f,g),1}'\\ -c_1\J_{(f,g),2}  + c_3\J_{(f,g),0} \\-c_1\J_{(f,g),1}' - c_2\J_{(f,g),0} \end{bmatrix} \AND \begin{cases} \eta_{21}\eta_{12} - \eta_{11}\eta_{22} = \frac{a_4^3}{2h_2^2}\J_{(f,g),2},\\\eta_{31}\eta_{12} - \eta_{11}\eta_{32}=\frac{a_4^3}{2h_2^2}\J_{(f,g),1}'.\end{cases}\]
The two expressions on the right cannot be both zero because $\Delta(F),\Delta(\J_{(f,g)})\neq0$. Hence, the above $(3\times2)$-matrix has trivial null space, so $y_1 = y_2 = 0$ if and only if (\ref{in K relation}) holds.\\

Now, suppose that $x\in K$, which implies $x_2=0$. We  then deduce that
\[ x - c_0 = x_1 + Bx_2 = \frac{-C}{A}(c_2a_4 + a_3a_3) + \frac{BC}{A^2}(c_3a_4) =\frac{1}{(A_1^2 - kA_2^2)^2}(z_1 + z_2 \sqrt{k}),\]
where $z_1,z_2\in\bQ$ are given by
\begin{align*} z_1 &= -(A_1^2 - kA_2^2)(A_1C_1 - kA_2C_2)(c_2a_4  + c_3a_3) \\ & \hspace{0.5cm}+ ((A_1^2 + kA_2^2)(B_1C_1+kB_2C_2)- 2kA_1A_2(B_1C_2 + B_2C_1))(c_3a_4),\\
z_2 & = -(A_1^2 - kA_2^2)(A_1C_2 -A_2C_1)(c_2a_4  + c_3a_3) \\&\hspace{0.5cm}+ ((A_1^2 + kA_2^2)(B_1C_2 + B_2C_1) - 2A_1A_2(B_1C_1 + kB_2C_2))(c_3a_4).
\end{align*}
Again, using SageMath \cite{Sage} we check that $A_1^2 - kA_2^2 = a_4/h_2$ and
\begin{align}\label{z1z2}
(h_2/a_4)^2z_1 & = -(c_2(2h_2f_2f_0 + h_1(f_2g_0 + f_0g_2) + 2h_0g_2g_0) \\\notag & \hspace{2.5cm}+ c_3(2h_2f_1f_0 + h_1(f_1g_0 + f_0g_1) + 2h_0g_1g_0))/2,\\\notag
(h_2/a_4)^2z_2 &= (c_2(f_2g_0 - f_0g_2) + c_3(f_1g_0 - f_0g_1))/2.
\end{align}
By (\ref{in K relation}), we know that 
\begin{equation} \label{c2c3} \begin{cases}c_2 = -c_3\J_{(f,g),1}'/\J_{(f,g),2}\AND c_1 = c_3\J_{(f,g),0}/\J_{(f,g),2}&\mbox{if }\J_{(f,g),2}\neq0,\\ 
c_3 = -c_2\J_{(f,g),2}/\J_{(f,g),1}'\AND c_1 = -c_2\J_{(f,g),0}/\J_{(f,g),1}'&\mbox{if }\J_{(f,g),1}'\neq0.
\end{cases}\end{equation}
By substituting (\ref{c2c3}) into (\ref{z1z2}), we obtain the desired expression for $x-c_0$ with
\[s_1 = h_2f_2f_0 + h_1f_0g_2 + h_0g_2 g_0\AND s_2 = h_2f_1f_0 + h_1f_0g_1 + h_0g_1g_0.\]
The above relations also imply that $\J_{(f,g),2}/d$ divides $c_3$ if $\J_{(f,g),2}\neq0$, and that $\J_{(f,g),1}'/d$ divides $c_2$ if $\J_{(f,g),1}'\neq0$. We then see that $x\in\bZ\oplus\bZ\tau$, as claimed.
\end{proof}

\begin{proof}[Proof of Theorem~\ref{necessary thm'}]First, we shall prove claim (\ref{QF cap K}). By Lemma~\ref{in K lem}, it remains to show the inclusion $\bZ\oplus\bZ\tau\subset Q_F\cap K$, and it suffices to show that $\tau\in Q_F\cap K$. Put
\[ c_0 = (s_1\J_{(f,g),1}' - s_2\J_{(f,g),2})/d,\,\ c_1 = -\J_{(f,g),0}/d,\,\ c_2 =  \J_{(f,g),1}'/d,\,\ c_3 = -\J_{(f,g),2}/d,\]
where $s_1,s_2\in\bZ$ are as above. Clearly $c_0,c_1,c_2,c_3\in\bZ$ satisfy (\ref{in K relation}). From Lemma~\ref{in K lem}, we then deduce that the element in (\ref{element x}) lies in $K$ and is equal to $\tau$, whence the claim.\\\\
Now, write $k = m^2k'$ for $m\in\bN$ and $k'\in\bZ$ square-free. Notice that $m\neq 1$ if $k'\not\equiv1\pmod{4}$ because $k$ is a discriminant. We know that
\[\mbox{disc}(\mathcal{O}_K) = \begin{cases}4k'&\mbox{if $k'\not\equiv 1$ (mod $4$)},\\ k'
&\mbox{if $k'\equiv1$ (mod $4$)},\end{cases} \AND \mbox{disc}(Q_F\cap K) = \left(\frac{\Delta(\J_{(f,g)})}{4d}\right)^2m^2k'\]
by (\ref{QF cap K}). Suppose that $Q_F$ is maximal. Then $\mbox{disc}(\mathcal{O}_K)$ and $\mbox{disc}(Q_F\cap K)$ are equal by (\ref{maximal iff}), and we deduce that $k$ is a fundamental discriminant and $\Delta(\J_{(f,g)}) = \pm 4d$. Since $\Delta(\J_{(f,g)})$ is divisible by $4d^2$ by definition, we must in fact have $d = 1$, whence $(f,g)$ is primitive. By Proposition~\ref{primitive char}, this in turn implies that $\J_{(f,g)} = \pm 2\cdot\J$ if $\beta$ is odd, and $\J_{(f,g)}  = \pm \J$ if $\beta$ is even. It follows that $\Delta(\J) \in \{1,4,-4\}$, as claimed.
\end{proof}

\subsection{The conductor polynomial}\label{conductor poly}

Let us make the following definition:

\begin{definition}\label{conductor def}For any $F\in V_\J(\bZ)$ of non-zero discriminant, write $F = h(f,g)$ as in Theorem~\ref{h(f,g)} with $(f,g)$ primitive, and define the \emph{conductor polynomial of $F$ with respect to $\J$} by $\C_\J(F) = \Delta(F)/\Delta(h)$. This does not depend on the choice of $h,f,g$. For convenience, we shall define $\C_\J(F) = 0$ when $\Delta(F)=0$.
\end{definition}

The conductor polynomial $\C_\J(\cdot)$ is an invariant in a certain sense, as the following proposition demonstrates:

\begin{proposition}\label{invariant}For any $F\in V_{\J}(\bZ)$ and $T\in\GL_2(\bZ)$, we have $\C_{\J_T}(F_T) = \C_\J(F)$.
\end{proposition}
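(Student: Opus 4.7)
The plan is to show that the representation $F = h(f,g)$ of Theorem~\ref{h(f,g)} transforms covariantly under the twisted action of $\GL_2(\bZ)$, in the sense that $F_T = h(f_T, g_T)$ with $(f_T, g_T)$ primitive with respect to $\J_T$, and with the quadratic form $h$ unchanged. Combined with the fact that the normalization $\det(T)^{-2}$ in (\ref{action}) is precisely the one that renders the discriminant invariant, i.e., $\Delta(F_T) = \Delta(F)$ for $T \in \GL_2(\bZ)$, this immediately yields
\[
\C_{\J_T}(F_T) = \Delta(F_T)/\Delta(h) = \Delta(F)/\Delta(h) = \C_\J(F),
\]
where the first equality is Definition~\ref{conductor def} applied to the representation $F_T = h(f_T, g_T)$.

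For the identity $F_T = h(f_T, g_T)$, I would observe that $h$ is homogeneous of degree two, so $h(\det(T) u, \det(T) v) = \det(T)^2 h(u,v)$ for all $u,v$. Unravelling the definitions $f(t_1 x + t_2 y, t_3 x + t_4 y) = \det(T) f_T(x,y)$ and the analogous one for $g$, one obtains $F(t_1 x + t_2 y, t_3 x + t_4 y) = \det(T)^2 h(f_T, g_T)$, and the factor $\det(T)^{-2}$ in the definition of $F_T$ absorbs the $\det(T)^2$.

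For the primitivity of $(f_T, g_T)$ with respect to $\J_T$, a direct chain-rule computation shows $\J_{(f_T, g_T)} = (\J_{(f,g)})_T$, i.e., the Jacobian determinant is equivariant with respect to the twisted $\GL_2(\bZ)$-action on pairs on the one side and on binary quadratic forms on the other. By Proposition~\ref{primitive char}(iv), primitivity of $(f,g)$ with respect to $\J$ gives $\J_{(f,g)} = \pm n_\beta \J$, whence $\J_{(f_T, g_T)} = \pm n_\beta \J_T$. The last step is to verify $n_\beta = n_{\beta'}$, where $\beta'$ denotes the middle coefficient of $\J_T$; this holds because the twisted action preserves the discriminant of $\J$, and for a primitive integral binary quadratic form the parity of the middle coefficient is determined by the discriminant modulo $4$ (odd iff the discriminant is $\equiv 1 \pmod 4$). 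Appealing once more to Proposition~\ref{primitive char}(iv), we conclude that $(f_T, g_T)$ is primitive with respect to $\J_T$. The main piece of bookkeeping is this parity check; everything else is a routine chain-rule computation.
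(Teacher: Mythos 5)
Your proof is correct and takes essentially the same approach as the paper: the paper's entire proof is the one-line observation that primitivity of $(f,g)$ and proportionality of $\J_{(f,g)}$ to $\J$ are preserved in passing to $(f_T,g_T)$ and $\J_T$, and you have simply supplied the details (the covariance $F_T=h(f_T,g_T)$, the equivariance of the Jacobian determinant, and the invariance of $\Delta$ under the twisted action). Your parity bookkeeping for $n_\beta$ is fine, though it can be bypassed: primitivity in Definition~\ref{con} is an intrinsic gcd condition on the coefficients of $\J_{(f,g)}$, which is visibly preserved by a unimodular twisted substitution.
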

\begin{proof}
Observe that any $T\in \GL_2(\bZ)$ induces an isomorphism
\[ W_\J(\bZ) \longrightarrow W_{\J_T}(\bZ);\hspace{1em} \varphi\mapsto \varphi_T\]
of $\bZ$-modules. Let us write $F=h(f,g)$ as in Theorem \ref{h(f,g)} with $(f,g)$ primitive and $\J_{(f,g)}(x,y)$ proportional to $\J(x,y)$. By Proposition \ref{Jfg lemma}, we know that $f,g\in W_\J(\bZ)$, and $\J_{(f_T,g_T)}(x,y)$ is proportional to $\J_T(x,y)$. By Proposition \ref{primitive char}, the pair $(f,g)$ is a $\bZ$-basis for $W_{\J}(\bZ)$, so clearly $(f_T,g_T)$ is a $\bZ$-basis for $W_{\J_T}(\bZ)$. This implies that $(f_T,g_T)$ is primitive, again by Proposition \ref{primitive char}. It then follows from Definition \ref{conductor def} that
\[ \C_{\J_T}(F_T) = \C_{\J_T}(h(f_T,g_T))= \Delta(F_T)/\Delta(h) = \Delta(F)/\Delta(h) = \C_\J(F),\]
as claimed.
\end{proof}

Given an integral and irreducible binary quartic form $F$ with $\Gal(F)\simeq D_4,C_4$, Proposition~\ref{criterion} implies that $F\in V_\J(\bZ)$ for a unique choice of $\J$ up to scaling. We may then write $\C(F)$ without the subscript $\J$, and refer to it as the \emph{conductor polynomial of $F$}. This is related to the conductor of quartic $D_4$-fields as follows:

\begin{proposition}\label{conductor vs}For any integral and irreducible binary quartic form $F$ such that $Q_F$ is maximal, we have $|\Delta(F)| = |\operatorname{disc}(L_F)|$, as well as $|\C(F)| = |\C(L_F)|$ when $\Gal(F)\simeq D_4$.
\end{proposition}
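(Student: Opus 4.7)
The plan is that both equalities fall out of the structural results in Theorems~\ref{h(f,g)} and~\ref{necessary thm'} once we identify the quadratic subfield of $L_F$ explicitly, so no new computation is really needed.

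For the first equality, I would simply invoke the Birch--Merriman--Nakagawa identity $\Delta(F) = \operatorname{disc}(Q_F)$ recalled in Section~\ref{Intro}. Since $F$ is irreducible, $Q_F$ is an order in the quartic field $L_F$; when $Q_F$ is moreover maximal, we have $Q_F = \mathcal{O}_{L_F}$, and therefore $\Delta(F) = \operatorname{disc}(Q_F) = \operatorname{disc}(\mathcal{O}_{L_F}) = \operatorname{disc}(L_F)$, which is the desired equality (with matching signs, even).

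For the second equality, I would use Proposition~\ref{criterion} to obtain the unique (up to scaling) integral primitive $\J$ with $F \in V_\J(\bZ)$, so that $\C(F) = \C_\J(F)$ is well defined. Writing $F = h(f,g)$ as in Theorem~\ref{h(f,g)} with $(f,g)$ primitive, Theorem~\ref{necessary thm'} tells me that $\Delta(h)$ is a fundamental discriminant. The key geometric input is the factorization
\[ F = h_2\!\left(f - \tfrac{-h_1 + \sqrt{\Delta(h)}}{2h_2}\,g\right)\!\left(f - \tfrac{-h_1 - \sqrt{\Delta(h)}}{2h_2}\,g\right) \]
from (\ref{F factor fg}): since $F$ is irreducible over $\bQ$, $\Delta(h)$ cannot be a rational square, and dehomogenising the first factor shows that $\theta_F$ satisfies a quadratic polynomial over $\bQ(\sqrt{\Delta(h)})$. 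Hence $\bQ(\sqrt{\Delta(h)}) \subseteq L_F$ is a genuine quadratic subfield, and since $\Gal(F) \simeq D_4$ forces $L_F$ to have a unique quadratic subfield, we conclude $K_{L_F} = \bQ(\sqrt{\Delta(h)})$. Because $\Delta(h)$ is fundamental, $\operatorname{disc}(K_{L_F}) = \Delta(h)$.

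Combining this with Definition~\ref{conductor def} and the first equality now finishes the argument:
\[ |\C(F)| \;=\; |\C_\J(F)| \;=\; \left| \frac{\Delta(F)}{\Delta(h)} \right| \;=\; \left| \frac{\operatorname{disc}(L_F)}{\operatorname{disc}(K_{L_F})} \right| \;=\; |\C(L_F)|. \]
The only step that needs a moment of thought is the identification $K_{L_F} = \bQ(\sqrt{\Delta(h)})$, but this is exactly the point already used in the proof of Theorem~\ref{necessary thm'} when introducing $K = \bQ(\sqrt{h})$, so there is no real obstacle — the uniqueness of the quadratic subfield under the $D_4$ hypothesis is what makes the identification canonical. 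I expect the whole proof to be a short paragraph.
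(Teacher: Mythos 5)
Your proposal is correct and follows essentially the same route as the paper: the first equality via Nakagawa's identity $\Delta(F) = \operatorname{disc}(Q_F)$ combined with $Q_F = \mathcal{O}_{L_F}$ from maximality, and the second via writing $F = h(f,g)$, identifying $K_{L_F} = \bQ(\sqrt{\Delta(h)})$ as the unique quadratic subfield, and using the fact from Theorem~\ref{necessary thm'} that $\Delta(h)$ is a fundamental discriminant so that $\operatorname{disc}(K_{L_F}) = \Delta(h)$. Your write-up merely makes explicit (via the factorization (\ref{F factor fg})) a step the paper leaves implicit.
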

\begin{proof}By \cite[Proposition 1.1]{Nakagawa}, we know that $\Delta(F) = \mbox{disc}(Q_F)$, and so the first claim holds by (\ref{maximal iff}). Write $F = h(f,g)$ as in Theorem~\ref{h(f,g)} with $(f,g)$ primitive. Notice that $\bQ(\sqrt{\Delta(h)})$ is a quadratic subfield of $L_F$ and $\Delta(h)$ is a fundamental discriminant by Theorem~\ref{necessary thm'}. We then see that the second claim holds as well.
\end{proof}

Finally, we note that the terminology \emph{conductor polynomial} comes from the fact that $\C_\J(F)$ is a polynomial in the coefficients of $F$. More specifically, let $F\in V_\J(\bZ)$ be as in (\ref{generic F}) of non-zero discriminant. By \cite[(3-1) and (3-2)]{TX}, we know that if $\alpha\neq 0$ and $\beta,\beta^2+4\alpha\gamma\neq0$, respectively, then the coefficients $(a_1,a_0)$ and $(a_3,a_1)$ are determined by the remaining three, and it may be computed in SageMath \cite{Sage} for example, that
\begin{align*}
    \Delta(F) & = \frac{(\beta^2-4\alpha\gamma)^2}{64\alpha^9}\cdot P_\J(a_4,a_3,a_2)\cdot (-8\gamma a_4^2+\alpha a_3^2 - 4\alpha a_4a_2 + 2\beta a_4a_3)^2,\\
    \Delta(F) & = \frac{16(\beta^2-4\alpha\gamma)^2}{\beta^6(\beta^2+4\alpha\gamma)^6}\cdot P_\J'(a_4,a_2,a_0)\cdot(16\gamma^4 a_4^2 + \beta^4a_2^2 + 16\alpha^4a_0^2 - 8\beta^2\gamma^2 a_4a_2 \\
&\hspace{4cm}  - 4\beta^4a_4a_0 - 32\alpha\beta^2\gamma a_4a_0 - 32\alpha^2\gamma^2 a_4a_0 - 8\alpha^2\beta^2a_2a_0)^2,
\end{align*}
where $P_\J$ and $P_\J'$ are some polynomials with coefficients in $\bZ[\alpha,\beta,\gamma]$. Now, write $F = h(f,g)$ with $(f,g)$ primitive given by Theorem \ref{h(f,g)}, and $\J_{(f,g)}(x,y) = \pm n_\beta \J(x,y)$ by Proposition \ref{primitive char}. Using this, together with the coefficients of $F$ given explicitly in (\ref{ai}), again we computed in SageMath \cite{Sage} that if $\alpha\neq0$ and $\beta,\beta^2+4\alpha\gamma\neq0$, respectively, then
\begin{align*} \Delta(h) & = \frac{1}{n_\beta^2\alpha^3}\cdot(- 8\gamma a_4^2+\alpha a_3^2 - 4\alpha a_4a_2 + 2\beta a_4a_3),\\
\Delta(h) & = \frac{4}{n_\beta^2\beta^2(\beta^2+4\alpha\gamma)^2}\cdot(16\gamma^4 a_4^2 + \beta^4a_2^2 + 16\alpha^4a_0^2 - 8\beta^2\gamma^2 a_4a_2 \\
&\hspace{2.75cm}  - 4\beta^4a_4a_0 - 32\alpha\beta^2\gamma a_4a_0 - 32\alpha^2\gamma^2 a_4a_0 - 8\alpha^2\beta^2a_2a_0).\end{align*} 
Regarding $\alpha,\beta$ and $\gamma$ as fixed constants, we then see that indeed $\C_\J(F)$ may be expressed as a polynomial in three of the coefficients of $F$.

\section{The number of pre-maximal binary quartic forms having bounded conductor}
\label{pre-max}

In Section \ref{reduce section}, we showed that integral binary quartic forms corresponding to quartic $D_4$-fields with monogenic cubic resolvent may be $\GL_2(\bZ)$-translated into one of the three families $V_{\J^{(i)}}(\bZ)$ for $i = 1,2,3$. In this section, we shall count these so-called pre-maximal forms
\[\label{VjiZ} 
V_{\J^{(i)}}(\bZ)(X)  = \{F \in V_{\J^{(i)}}(\bZ) : 0 < |\C_{\J^{(i)}}(F)|< X \}\]
by their conductor, without imposing any maximality condition.\\

Let us make two observations. First, for any $F\in V_{\J^{(i)}}(\bZ)$ as in (\ref{F in J}), it is easy to check that 
\begin{align}\label{Ci}
\C_{\J^{(i)}}(F) & = \Delta(F)/(B^2-4AC) \\\notag
& = \begin{cases} 16AC(B^2 - 4AC) & \mbox{for $i=1$},\\
(4A-2B +C)(4A+2B+C)(B^2-4AC)&\mbox{for $i=2$},\\
((4A-C)^2 + 4B^2)(B^2 -4AC)&\mbox{for $i=3$}.
\end{cases}\end{align}
Second, the two families $V_{\J^{(1)}}(\bZ)$ and $V_{\J^{(2)}}(\bZ)$ exhibit an obvious relation because
\[ M_{\J^{(1)}} = \frac{1}{2}\left(T_0^{-1}M_{\J^{(2)}} T_0\right)\mbox{ for }T_0 = \begin{pmatrix} 1 & 1 \\ -1 & 1 \end{pmatrix}\mbox{ where }M_{\J^{(1)}} = \begin{pmatrix} 1 & 0 \\ 0 & -1 \end{pmatrix},\, M_{\J^{(2)}} = \begin{pmatrix} 0 & -2 \\ -2 & 0 \end{pmatrix}\] 
are as in (\ref{MJ def}). In particular, there is an isomorphism of $\bR$-vector spaces defined by 
\begin{equation}\label{map 2 to 1}\iota: V_{\J^{(2)}}(\bR) \longrightarrow V_{\J^{(1)}}(\bR);\hspace{1em}\iota(F) =  \det(T_0)^2\cdot F_{T_0} = 4\cdot F_{T_0},\end{equation}
where the factor $\det(T_0)^2$ is simply to ensure that $\iota$ preserves integrality, because we are using the twisted action (\ref{action}). Identifying $V_{\J^{(1)}}(\bR)$ and $V_{\J^{(2)}}(\bR)$ with $\bR^3$ via (\ref{F in J}), explicitly
\begin{equation}\label{iota explicit}
\iota(A,B,C) = (4A-2B+C, 2(4A-C), 4A+2B+C).\end{equation}
Also, a direct calculation yields
\[ \begin{pmatrix} 4 & -2 & 1 \\ 8 & 0 & -2 \\ 4 & 2 & 1\end{pmatrix}^{-1} = \frac{1}{16}\begin{pmatrix} 1 & 1 & 1  \\ -4 & 0 & 4 \\ 4 & -4 & 4 \end{pmatrix} \AND \det\begin{pmatrix} 4 & -2 & 1 \\ 8 & 0 & -2 \\ 4 & 2 & 1\end{pmatrix} = 64.\]
We then see that the image of $V_{\J^{(2)}}(\bZ)$ under $\iota$ is the sublattice
\[ \Lambda_\iota = \{(a,b,c)\in\bZ^3:a+b+c\equiv0\ppmod{16},\, -a+c,a-b+c\equiv0\ppmod{4}\}\]
having determinant $64$. The above discussion implies that the enumerations of $V_{\J^{(1)}}(\bZ)$ and $V_{\J^{(2)}}(\bZ)$ are essentially the same problem. We shall then focus on the family $V_{\J^{(1)}}(\bZ)$, and we shall also show that the contribution of $V_{\J^{(3)}}(\bZ)$ is in fact negligible.

\subsection{Planar regions corresponding to bounded conductor} \label{planar section}
As we can see from (\ref{F in J}), for each $i=1,2,3$, the set $V_{\J^{(i)}}(\bZ)$ can be identified with $\bZ^3$, with a corresponding embedding in $\bR^3$. The condition that $|\C_{\J^{(i)}}(F)|$ being bounded therefore determines a certain region in $\bR^3$ of positive measure. Our plan is to exchange these somewhat awkward 3-dimensional regions for nicer 2-dimensional regions. \\

For any $F\in V_{\J^{(i)}}(\bZ)$ given as in (\ref{F in J}), let us put
\[\label{xy definition} (x_i(F),u_i(F),v_i(F)) = \begin{cases}(B,A,C) &\mbox{for $i=1$},\\
(4A-C, 4A-2B+C, 4A+2B+C) &\mbox{for $i=2$},\\
(4A+C,4A-C, 2B) &\mbox{for $i=3$}.
\end{cases}\]
Then, by further defining
\[ y_i(F) = \begin{cases}
4u_i(F)v_i(F) & \mbox{for }i=1,\\
u_i(F)v_i(F) & \mbox{for }i=2,\\ u_i(F)^2 + v_i(F)^2 &\mbox{for }i=3,
\end{cases}\]
we obtain from (\ref{Ci}) the formulae
\begin{align}\label{Ci xy formula}
\C_{\J^{(1)}}(F) & = \Delta(F)/(x_1(F)^2-y_1(F)) = 4y_1(F)(x_1(F)^2 - y_1(F)),\\\notag
\C_{\J^{(2)}}(F) & = 4\Delta(F)/(x_2(F)^2-y_2(F)) = y_2(F)(x_2(F)^2-y_2(F))/4, \\\notag
\C_{\J^{(3)}}(F) & = -4\Delta(F)/(x_3(F)^2-y_3(F)) = -y_3(F)(x_3(F)^2-y_3(F))/4.\end{align}
From the above change of variables, we see that the bound on the conductor polynomial gives the $2$-dimensional region
\[R(X) = \{(x,y) \in \bR^2 : |y|, |x^2 - y| \geq 1,\, |y(x^2 - y)| < X\}.\]
From a point $(x,y)\in R(X)\cap \bZ^2$, for $i=1,2$ we can recover elements in $V_{\J^{(i)}}(\bZ)$ by restricting to appropriate congruence classes and then taking divisors of $y$. But for $i=3$ we must require that $y$ is a sum of two squares. More precisely:

\begin{lemma} \label{RX bijections} We have the equalities
\begin{align*}
 \# V_{\J^{(1)}}(\bZ)(X) &= \sum_{\substack{(x,y) \in R(X/4) \cap \bZ^2 \\ y \equiv 0 \ppmod{4} }} \sum_{uv = y/4} 1,\\
 \# V_{\J^{(2)}}(\bZ)(X) & = \sum_{\substack{(x,y) \in R(4X) \cap \bZ^2  }} \sum_{\substack{uv = y \\ (x,u,v) \in \Lambda_2}} 1,\\
\# V_{\J^{(3)}}(\bZ)(X) & = \sum_{(x,y) \in R(4X) \cap \bZ^2} \sum_{\substack{u^2 + v^2 = y \\ (x,u,v) \in \Lambda_3}}1,
 \end{align*}
where $\Lambda_2$ and $\Lambda_3$ are the sublattices in $\bZ^3$ defined by
\begin{align*}
\Lambda_2 &  = \{(x,u,v)\in\bZ^3 : 2x + u +v \equiv0\ppmod{16},\,-u+v,-2x+u+v\equiv0\ppmod{4}\},\\
\Lambda_3 & = \{(x,u,v)\in\bZ^3 : x+u\equiv0\ppmod{8},\, v,x-u\equiv0\ppmod{2} \}.
\end{align*}
\end{lemma}
\begin{proof}For $i=2,3$, the change of variables from $(x,u,v)$ to $(A,B,C)$ is given by
\[ \begin{pmatrix}
4 & 0 & -1 \\ 4 & -2 & 1 \\ 4 & 2 & 1 
\end{pmatrix}^{-1} = \frac{1}{16}\begin{pmatrix} 2 & 1 & 1 \\ 0 & -4 & 4 \\ -8 & 4 & 4\end{pmatrix}\AND 
\begin{pmatrix}4 & 0 & 1 \\ 4 & 0 & -1 \\  0& 2 & 0
\end{pmatrix}^{-1} = \frac{1}{8}\begin{pmatrix} 1 & 1 & 0 \\ 0 & 0 & 4 \\ 4 & -4 & 0\end{pmatrix},\]
respectively, explaining the definitions of $\Lambda_2$ and $\Lambda_3$.
\end{proof}

The main tool we shall use to convert geometric information to an arithmetic count is the following well-known result of Davenport \cite{Dav}, considered the bedrock of geometry of numbers. We shall use the formulation due to Bhargava \cite{Bha} and Bhargava-Shankar \cite{BS}:

\begin{proposition}[Davenport's lemma] \label{Davenport}Let $\R$ be a bounded, semi-algebraic multi-set in $\bR^n$ having maximum multiplicity $m$ and that is defined by at most $k$ polynomial inequalities, each having degree at most $\ell$. Then the number of integral lattice points (counted with multiplicity) contained in the region $\R$ is given by
\[\Vol(\R) + O(\max\{\Vol(\ol{\R}), 1\}),\]
where $\Vol(\ol{\R})$ denotes the greatest $d$-dimensional volume among the projections of $\R$ onto a coordinate subspace by equating $n-d$ coordinates to zero, where $1\leq d\leq n-1$ The implied constant in the second summand depends only on $n,m,k,\ell$. 
\end{proposition}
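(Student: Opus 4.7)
The plan is to argue by induction on the dimension $n$, which is the standard route for results of Davenport type. The base case $n=1$ follows from noting that a bounded semi-algebraic subset of $\bR$ defined by at most $k$ polynomial inequalities of degree at most $\ell$ is a union of $O_{k,\ell}(1)$ intervals, since the boundary is contained in the zero locus of finitely many polynomials each with at most $\ell$ real roots. On each interval $[a,b]$ the count $\#([a,b]\cap\bZ)$ equals $(b-a)+O(1)$, so summing and multiplying by the maximum multiplicity $m$ gives $\Vol(\R)+O(1)$, which agrees with the statement when $n=1$ (the projected volume term is absent/replaced by the constant $1$).

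For the inductive step, assuming the claim in dimensions $1,\dots,n-1$, I would fibre $\R$ over the last coordinate. For each $t\in\bR$ set
\[
\R_t=\{\mathbf{x}\in\bR^{n-1}:(\mathbf{x},t)\in\R\},
\]
and observe that each $\R_t$ is again a bounded semi-algebraic multi-set whose defining data $(m,k,\ell)$ is inherited from $\R$. Then
\[
\#(\R\cap\bZ^n)=\sum_{t\in\bZ}\#(\R_t\cap\bZ^{n-1})=\sum_{t\in\bZ}\Bigl(\Vol(\R_t)+O\bigl(\max\{\Vol(\ol{\R_t}),1\}\bigr)\Bigr),
\]
by the inductive hypothesis. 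The main term $\sum_{t\in\bZ}\Vol(\R_t)$ should be compared to the integral $\int_\bR\Vol(\R_t)\,dt=\Vol(\R)$. Since $t\mapsto\Vol(\R_t)$ is piecewise continuous with $O_{k,\ell}(1)$ breakpoints (again by a Bezout-style count on the semi-algebraic boundary) and is bounded by the volume of the projection of $\R$ onto the hyperplane $x_n=0$, the Riemann-sum error is $O(\max\{\Vol(\ol{\R}),1\})$.

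For the error sum, the number of integers $t$ with $\R_t\ne\emptyset$ is bounded by the length of the projection of $\R$ onto the $x_n$-axis plus $O(1)$, and each $\Vol(\ol{\R_t})$ is dominated by the volume of the appropriate projection of $\R$ itself. Combining these two facts, the total contribution of the fibrewise error terms is again $O(\max\{\Vol(\ol{\R}),1\})$, because every coordinate projection of a fibre is a coordinate projection of $\R$ of one dimension lower, and so is controlled by the $\ol{\R}$ of the statement. The main obstacle I anticipate is bookkeeping: one must verify that the constants remain uniform in $t$ and depend only on $(n,m,k,\ell)$, and that the various projections invoked at each level of the induction really do sit inside the maximum coordinate projection $\ol{\R}$ appearing in the statement rather than introducing a larger-dimensional remainder. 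Once this is tracked carefully, the induction closes and yields the claimed estimate.
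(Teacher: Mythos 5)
The paper offers no proof of this proposition: it is quoted as Davenport's lemma, with the argument deferred entirely to Davenport \cite{Dav} and to the semi-algebraic reformulation of Bhargava \cite{Bha} and Bhargava--Shankar \cite{BS}. Your slicing induction is indeed the skeleton of Davenport's own argument, but two of your steps do not close as written. First, the comparison of $\sum_{t\in\bZ}\Vol(\R_t)$ with $\int_{\bR}\Vol(\R_t)\,dt$ cannot be justified by ``piecewise continuous with $O_{k,\ell}(1)$ breakpoints and bounded by a projection volume'': a continuous function bounded by $M$ on an interval of length $L$ can have Riemann-sum error of order $ML$ (consider $M\sin^2(\pi t)$), which here would be an error comparable to $\Vol(\R)$ itself. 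What is actually needed is that $t\mapsto\Vol(\R_t)$ is piecewise \emph{monotone} with a number of pieces bounded only in terms of $(n,k,\ell)$, after which the error per piece is $O(\sup_t\Vol(\R_t))=O(\max\{\Vol(\ol{\R}),1\})$. That uniform piecewise monotonicity is a genuine uniform-finiteness/o-minimality statement about semi-algebraic families (the fiber-volume function is not itself semi-algebraic, only definable in a larger o-minimal structure), and it is precisely the content that the ``semi-algebraic of bounded complexity'' hypotheses are there to supply; a Bezout-style count of boundary intersections does not produce it.

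Second, your accounting of the fibrewise error terms uses the wrong combination. Bounding $\sum_{t}\max\{\Vol(\ol{\R_t}),1\}$ by (number of nonempty integer fibers) times (a uniform bound on each term) yields a quantity of size (extent of $\R$ in the $x_n$-direction) times $\sup_t\Vol(\ol{\R_t})$, which can exceed $\Vol(\ol{\R})$ by a large factor: for $\R=([0,N]\times[0,1]^2)\cup([0,1]^2\times[0,N])\subset\bR^3$ this product is $\asymp N\cdot N=N^2$, while $\Vol(\ol{\R})\asymp N$ and the true value of the sum is $O(N)$. The correct bookkeeping --- which your final sentence gestures at but does not carry out --- is that each $d$-dimensional coordinate projection of the fiber $\R_t$ is contained in the $t$-slice of the corresponding $(d+1)$-dimensional coordinate projection of $\R$, so that summing its $d$-dimensional volume over $t$ produces another Riemann sum, this time for a $(d+1)$-dimensional projection volume of $\R$, controlled by it up to errors of the same recursive shape; this telescoping is why Davenport's error term is naturally a \emph{sum} over coordinate projections of all dimensions, and it relies again on the monotonicity input above. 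Two smaller points: the multiset should be handled by decomposing the multiplicity function into at most $m$ semi-algebraic superlevel sets rather than by multiplying a count by $m$, and one needs Tarski--Seidenberg to guarantee that the projections entering the induction remain semi-algebraic with complexity bounded in terms of $(n,k,\ell)$ alone.
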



\subsection{Counting elements in the three families by conductor} \label{pre-count section}

By Lemma \ref{RX bijections}, the enumerations of $V_{\J^{(i)}}(\bZ)$  may be reduced to counting integral points in $R(X)$ with weights. Let us record the following observations, both of which are easily verified, about the geometry of $R(X)$:

\begin{lemma}\label{bound on y}Let $(x,y)\in\bR^2$.
\begin{enumerate}[(a)]
\item For $|x|\geq \sqrt{2}X^{1/4}$, the condition $|y(x^2-y)|<X$ is equivalent to
\[ y\in \left(\frac{x^2 - \sqrt{x^4 + 4X}}{2},\frac{x^2 - \sqrt{x^4-4X}}{2} \right)\sqcup \left(\frac{x^2 + \sqrt{x^4 - 4X}}{2}, \frac{x^2 + \sqrt{x^4+4X}}{2}\right).\]
\item For $|x| < \sqrt{2}X^{1/4}$, the condition $|y(x^2-y)|<X$ is equivalent to
\[ y\in  \left(\dfrac{x^2 - \sqrt{x^4 + 4X}}{2}, \dfrac{x^2 + \sqrt{x^4+4X}}{2}\right).\]
\end{enumerate}
\end{lemma}

We next show that the region $R(X)$ does not contain any integral points with $x^2 > 2X$. This step is crucial: it shows that the cusp in $R(X)$ is not too large. 

\begin{lemma} \label{x bd} For all $(x,y) \in R(X) \cap \bZ^2$, we have $x^2 \leq 2X$. 
\end{lemma}

\begin{proof}Suppose that $(x,y)\in R(X)\cap\bZ^2$ but $x^2 > 2X$. Clearly $|y|\leq X$, and so $x^2 - y >0$. Since $y$ and $x^2 -y$ sum to $x^2$, one of them has to exceed $X$ in size. But then $|y(x^2-y)|$ is at least $X$, which contradicts that $(x,y)\in R(X)$.
\end{proof}

We shall prove:

\begin{proposition} \label{N1X count} For $i=1,2$, let $N_i^{(r_2)}(X)$ denote the number of forms $F\in V_{\J^{(i)}}(\bZ)(X)$ such that $F(x,1)$ has exactly $4-2r_2$ real roots and $|v_i(F)| > |u_i(F)|$. Then
\[\label{N1X} N_i^{(r_2)}(X) = \frac{\mathfrak{r}(r_2)\sqrt{2}\Gamma(1/4)^2}{96\sqrt{\pi}}X^{3/4} \log X + O\left(X^{3/4} (\log X)^{3/4}\right),
\]
where $\fr(0) = 1$, $\fr(1)=\sqrt{2}$, and $\fr(2) = 1+\sqrt{2}$ are as in Theorem \ref{real split MT}.
\end{proposition}

We remark that the extra requirement $|v_i(F)| > |u_i(F)|$ is to ensure that we only count each $\GL_2(\bZ)$-equivalence class once; see Lemma \ref{representative cor} below.\\

Recall the isomorphism $\iota$ in (\ref{map 2 to 1}) and the definition of $\Lambda_\iota$ after it. By (\ref{iota explicit}) and (\ref{Ci xy formula}), we have
\[ u_1(\iota(F)) = u_2(F),\, v_1(\iota(F)) = v_2(F),\, \C_{\J^{(1)}}(\iota(F)) = 256\cdot \C_{\J^{(2)}}(F)\mbox{ for all }F\in V_{\J^{(2)}}(\bZ).\]
It then follows that
\[ N_2^{(r_2)}(X) = \frac{1}{\det(\Lambda_\iota)}\cdot N_1^{(r_2)}(256X) = \frac{1}{64}\cdot N_1^{(r_2)}(256X),\]
Since the main term has order $X^{3/4}\log X$ and $256^{3/4} = 64$, the case $i=2$ would follow once we prove the case $i=1$.\\

Now, to prove the claim for $i=1$, the first step is to show that the cusps of $R(X)$ with $|x|$ large contribute to an error term:

\begin{lemma} \label{tail cut} We have
\[ \sum_{\substack{(x,y) \in R(X) \cap \bZ^2 \\ |x| \geq X^{1/4}(\log X)^{1/4} }} \sum_{uv = y}1 = O\left(X^{3/4}(\log X)^{3/4}\right) .\]
\end{lemma}

\begin{proof} For each $(x,y)\in R(X)\cap \bZ^2$ with $x \geq X^{1/4}(\log X)^{1/4}$, by Lemma~\ref{bound on y} (a), we have the constraint that $y$ lies in $J_{x,1}\sqcup J_{x,2}$, where
\[ J_{x,1} = \left(\frac{x^2 - \sqrt{x^4 + 4X}}{2}, \frac{x^2 - \sqrt{x^4 - 4X}}{2} \right) \AND J_{x,2} \left(\frac{x^2 + \sqrt{x^4 - 4X}}{2}, \frac{x^2 + \sqrt{x^4 + 4X}}{2} \right).\]
 Both $J_{x,1},J_{x,2}$ have lengths $O \left(X/x^2 \right)$, which is bounded away from zero because $x^2\leq 2X$ by Lemma~\ref{x bd}. Now, we need to evaluate the divisor function $d(\cdot)$ at $y$. What is problematic is that when $|x|$ is close to $X^{1/2}$, then $|y|$ is close to $X$ but lies in an interval of essentially bounded length. To overcome this issue, we shall further subdivide $x$ into the ranges
\[\label{mid range} X^{1/4} (\log X)^{1/4} < |x| < X^{5/16} \AND X^{5/16} \leq |x| \leq \sqrt{2}X^{1/2}.\]
For the first range of $x$, we shall draw upon the following estimate, due to Huxley \cite{Hux}, of the average order of the divisor function over short intervals:
\begin{equation}\label{dr growth}
\sum_{Y \leq y < Y + Y^\theta} d(y) \sim Y^\theta \log Y, \text{ valid for } \theta > 131/416.\end{equation}
We note that the weaker lower bound $\theta > 1/2$ suffices for us, which is obtained from Dirichlet's hyperbola method. The endpoints of $J_{x,1},J_{x,2}$ are of sizes $O(x^2)$. Since $J_{x,1},J_{x,2}$ have lengths $O(X/x^2)$, and the bound $|x|\leq X^{5/16}$ implies $X/x^2 > x^{2\theta}$ for $\theta < 3/5$, it follows that
\[\sum_{y \in J} d(y) \sim \frac{X}{x^2}\cdot \log X \mbox{ for both }J=J_{x,1},J_{x,2}.\]
The total contribution from the range $|x| \leq X^{5/16}$ is then
\begin{equation} \label{mid range cont}X \log X \sum_{X^{1/4} (\log X)^{1/4} < |x| < X^{5/16}} x^{-2} = O \left(X^{3/4} (\log X)^{3/4} \right).\end{equation}
When $X^{5/16} < |x| \leq \sqrt{2} X^{1/2}$, we no longer average the divisor function but use the uniform upper bound $d(y) = O_\ep \left(y^\ep \right)$ for any $\ep > 0$ instead. We have $y = O(X)$ and recall that $y$ lies in two intervals of lengths $O(X/x^2)$. The contribution from the range $|x|\geq X^{5/16}$ is hence
\begin{equation} \label{big range cont} O_\ep \left( X^{1+\ep}\sum_{X^{5/16} < |x| \leq \sqrt{2}X^{1/2}} x^{-2} \right) = O_\ep \left(X^{11/16 + \ep} \right).\end{equation}
Choosing $\ep < 1/16$, we then see that this is an acceptable error term. This completes the proof of the lemma. \end{proof} 

Lemma \ref{tail cut} allows us to eliminate the cusps of $R(X)$ and consider the following nicer subset which is free of extraordinarily long and narrow regions:
\[\label{RofX} \R(X) = \{(x,y) \in \bR^2 : |y|, |x^2 - y| \geq 1,\, |y(x^2 - y)| < X,\, |x| < 2X^{1/4} (\log X)^{1/4} \}.\]
The factor of $2$ in the last inequality is only for convenience when we calculate the area of $\R(X)$ later. Now, we are not simply counting integral points $(x,y)\in\R(X)$ but rather weighing each point by the divisor function evaluated at $y$. For each $a\in\bZ$, upon writing $y = 4av$ we introduce
\begin{align}\label{Ra definition}
    \R_a(X) &= \{(x,v) \in \bR^2 : |v| > |a|, |x^2 - 4av| \geq 1, |4av(x^2 - 4av)| < X/4, \\\notag &\hspace{7cm}\AND |x| < 2(X/4)^{1/4}(\log (X/4))^{1/4}\}.
\end{align}
Note that we have included the condition $y \equiv 0 \pmod{4}$ in the definition of $\R_a(X)$, and
\begin{equation}\label{bound on a} a^2 \leq |av| \ll x^2 + X^{1/2} \ll X^{1/2}(\log X)^{1/2}\mbox{ for all }(x,v)\in \R_a(X), \end{equation}
where the first $\ll$ follows from  Lemma~\ref{bound on y}. This means that
\[ \R_a(X) \mbox{ is the empty set when }|a| \gg X^{1/4}(\log X)^{1/4}. \]
From the discussion above, we now deduce that
\[ \sum_{r_2=0,1,2}N_1^{(r_2)}(X) = \sum_{1 \leq |a| \ll X^{1/4}(\log X)^{1/4}} \#(\R_a(X)\cap\bZ^2) + O\left(X^{3/4} (\log X)^{3/4}\right), \]
where the error term comes from Lemma \ref{tail cut}. The region $\R_a(X)$ is 2-dimensional and we may count the integral points inside via Davenport's lemma. To apply Davenport's lemma, we need to show that the projections of $\R_a(X)$ onto the coordinate axes are not too large:

\begin{lemma} \label{proj control} Let $a\in \bZ$ be non-zero. For any $(x,v) \in \R_a(X) \cap \bZ^2$, we have the bounds
\[|x| = O \left(X^{1/4} (\log X)^{1/4} \right)\AND |v| = O \left(X^{1/2} (\log X)^{1/2} |a|^{-1}\right),\]
where the implied constants are absolute. 
\end{lemma}
\begin{proof}
 The bound for $x$ is part of the definition of $\R_a(X)$, and that for $v$ follows from (\ref{bound on a}).
\end{proof}

From Lemma \ref{proj control} and Proposition \ref{Davenport}, we then obtain
\[\#(\R_a(X)\cap\bZ^2) = \Area(\R_a(X)) + O\left(X^{1/2} (\log X)^{1/2} |a|^{-1} + X^{1/4} (\log X)^{1/4}\right).\]
Let us now refine the above arguments to take into account the different real splitting types of integral binary quartic forms. First we show that:

\begin{lemma} \label{Wi real sig}For any $F\in V_{\J^{(1)}}(\bZ)$ as in (\ref{F in J}) of non-zero discriminant, the polynomial $F(x,1)$ has exactly $4-2r_2(F)$ real roots, where
\[ r_2(F) = \begin{cases} 0 & \mbox{if $AC, B^2-4AC,-AB>0$},\\ 1 &\mbox{if $AC<0$},\\ 2 &\mbox{otherwise}.\end{cases}\]
\end{lemma}
\begin{proof} For any integral binary quartic form $F$ given as in (\ref{generic F}) with non-zero discriminant, put
\[H(F) = 8a_4 a_2 - 3a_3^2\AND
S(F) = 3a_3^4 - 16a_4 a_3^2 a_2 + 16 a_4^2 a_2^2 + 16a_4^2 a_3 a_1 - 64a_4^3 a_0.\]
Then, by \cite[Proposition 7]{Cre} we know that $F(x,1)$ has exactly $4-2r_2(F)$ real roots, where
\[ r_2(F) = \begin{cases}0& \mbox{ if $\Delta(F),S(F),-H(F)>0$},\\ 1&\mbox{ if $\Delta(F)<0$},\\ 2 & \mbox{ otherwise}.\end{cases}\]
For $F\in V_{\J^{(1)}}(\bZ)$ as in (\ref{F in J}), we compute that
\[(H(F), S(F)) = (8AB,16A^2(B^2-4AC)),\]
and the claim now follows from (\ref{Ci}).
\end{proof}

For each $a\in\bZ$, in view of Lemma \ref{Wi real sig}, define
\begin{align*}
\R_a^{(0)}(X) & = \{(x,v)\in \R_a(X) : av,x^2 -4av,-ax>0\},\\
\R_a^{(1)}(X) & = \{(x,v)\in \R_a(X) : av < 0\},\\
\R_a^{(2)}(X) & = \R_a(X)\setminus\R_a^{(0)}(X)\setminus\R_a^{(1)}(X).
\end{align*}
Then, by the same arguments as above, we obtain
\[\label{N1 count}
    N_1^{(r_2)}(X) = \sum_{1 \leq |a| \ll X^{1/4}(\log X)^{1/4}} \#(\R_a^{(r_2)}(X) \cap \bZ^2) + O \left(X^{3/4} (\log X)^{3/4}\right),\]
and moreover
\[\#(\R_a^{(r_2)}(X) \cap \bZ^2) = \Area(\R_a^{(r_2)}(X)) + O \left(X^{1/2} (\log X)^{1/2} a^{-1} + X^{1/4} (\log X)^{1/4} \right).\]
It remains to compute the areas of these regions $\R_a^{(r_2)}(X)$ and then sum over $a$. We shall defer the area calculations to the next subsection.

\begin{proof}[Proof of Proposition \ref{N1X count}] It suffices prove the case $i=1$, and we have already shown that
\begin{align*} N_1^{(r_2)}(X) & = \sum_{1\leq |a|\ll X^{1/4}(\log X)^{1/4}}\Area(\R_a^{(r_2)}(X)) + O\left(X^{3/4}(\log X)^{3/4}\right)\\
&\hspace{1.5cm}+\sum_{1\leq |a|\ll X^{1/4}(\log X)^{1/4}}O\left(X^{1/2}(\log X)^{1/2}a^{-1}+X^{1/4}(\log X)^{1/4}\right).\end{align*}
The second summation is of order $O\left(X^{1/2}(\log X)^{3/2}\right)$ and hence is an error term. For the first summation, we use the areas of $\R_a^{(r_2)}(X)$ to be computed in the next subsection. In particular, for large values of $|a|$, by Proposition \ref{Area of Ra}, we have
\[ \sum_{X^{1/4}/2\leq |a|\ll X^{1/4}(\log X)^{1/4}} \Area(\R_a(X)) =  \sum_{X^{1/4}/2\leq |a|\ll X^{1/4}(\log X)^{1/4}}O\left(\frac{X}{a^2}\right) = O\left(X^{3/4}\right),\]
which is also an error term. For small values of $|a|$, by Corollary \ref{area cor}, we have
\begin{align*}&\hspace{5mm}\sum_{1\leq |a|\leq X^{1/4}/2}\Area(\R_a^{(r_2)}(X)) \\
& = 2\sum_{1\leq a\leq X^{1/4}/2}\left( \frac{1}{a}\frac{\mathfrak{r}(r_2)\sqrt{2}\Gamma(1/4)^2}{48\sqrt{\pi}}X^{3/4} + O\left(\frac{X^{3/4}(\log X)^{-1/4}}{a} + X^{1/2}\right)\right)\\
&= \frac{1}{4}\frac{\fr(r_2)\sqrt{2}\Gamma(1/4)^2}{24}X^{3/4}\log X + O\left(X^{3/4}(\log X)^{3/4}\right).
\end{align*}
This then proves the proposition.
\end{proof}

To conclude, we show that the contribution from forms in $V_{\J^{(3)}}(\bZ)(X)$ is negligible compared to $V_{\J^{(1)}}(\bZ)(X)$ and $V_{\J^{(2)}}(\bZ)(X)$.

\begin{proposition} \label{N3X} We have $\# V_{\J^{(3)}}(\bZ)(X) = O \left(X^{3/4} \right)$.
\end{proposition} 

\begin{proof} The proof proceeds similar to Proposition \ref{N1X}, and it suffices to estimate the number of points $(x,y)\in R(X)\cap \bZ^2$, but now we weight them by the function
\begin{equation}\label{kappa} d_\square(y) = \#\{(u,v)\in \bZ^2 : u^2 + v^2 = y\}\end{equation}
evaluated at $y$ rather than the divisor function. More specifically, for $|x| <\sqrt{2}X^{1/4}$ , by Lemma \ref{bound on y} (b) the constraint on $y$ is that it lies in the interval
\[ \left(\frac{x^2 - \sqrt{x^4 + 4X}}{2}, \frac{x^2 + \sqrt{x^4 + 4X}}{2} \right),\]
which has length $\sqrt{x^4 + 4X}$. Instead of (\ref{dr growth}), we apply the estimate
\[ \label{dr growth 2} \sum_{Y < y < Y + Y^\theta} d_\square(y) \sim \pi Y^\theta, \text{ valid for } \theta > 1/3,\]
by the work of Voronoi on the Gauss circle problem. We then see that the total contribution for each such $x$ is $O\left(X^{1/2} \right)$. Summing over $|x| <\sqrt{2} X^{1/4}$ gives us $O\left(X^{3/4}\right)$. For $|x| \geq \sqrt{2}X^{1/4}$, we use Lemma \ref{bound on y} (a), where now the total length of the intervals is $O \left(X/x^2\right)$. We argue as in the proof of Lemma \ref{tail cut}, but with the assumption $|x| \gg X^{1/4} (\log X)^{3/4}$ replaced by $|x| \gg X^{1/4}$, and the estimate (\ref{mid range cont}) becomes 
\[ X \sum_{X^{1/4} \ll |x| < X^{5/16}} x^{-2} = O \left(X^{3/4} \right)\]
instead. Finally, the estimate (\ref{big range cont}) in the proof of Lemma \ref{tail cut} readily shows that the contribution for $|x| \gg X^{5/16}$ is negligible. \end{proof}

\subsection{Area calculations}
\label{area comp}

In this subsection, we shall complete the proof of Proposition \ref{N1X count} by computing the areas of $\R_a^{(r_2)}(X)$. To do so, we shall first compute the areas of
\begin{align*} \label{Rr_2X} 
\R^{(0)}(X) & = \{(x,y) \in \R(X) : y>0\AND x^2-y>0\},\\
\R^{(1)}(X) & = \{(x,y) \in \R(X) : y <0\}, \notag \\
\R^{(2)}(X) & = \R(X)\setminus\R^{(0)}(X)\setminus\R^{(1)}(X). 
\end{align*}
It turns out that they are related to various \emph{elliptic integrals}. We shall therefore require some well-known explicit expressions for said integrals, given below. These are found in the book \cite{BF}. 

\begin{lemma}\label{preliminary int}We have
\begin{align*} 
\int_{0}^{1}\sqrt{z^4+1}dz &= \frac{1}{3}\left(\sqrt{2}+\frac{\Gamma(1/4)^2}{4\sqrt{\pi}}\right),\\
 \int_1^{\sqrt{2}(\log X)^{1/4}}\left(\sqrt{z^4+1}-z^2\right)dz&=\frac{1}{3}\left(\frac{-1}{\sqrt{2}+1}+\frac{\Gamma(1/4)^2}{4\sqrt{\pi}}\right) + O\left((\log X)^{-1/4}\right),\\
  \int_1^{\sqrt{2}(\log X)^{1/4}}\left(\sqrt{z^4+1}-\sqrt{z^4-1}\right)dz & = \frac{1}{3}\left(-\sqrt{2}+\frac{(1+\sqrt{2})\Gamma(1/4)^2}{4\sqrt{\pi}}\right)+O\left((\log X)^{-1/4}\right).
\end{align*}
\end{lemma}

\begin{proof}For any $z_1,z_2\in\bR$, using integration by parts, we have
\[\int_{z_1}^{z_2}\sqrt{z^4\pm1} dz= \frac{1}{3}\left(  \left[z\sqrt{z^4\pm1}\right]_{z_1}^{z_2} \pm 2\int_{z_1}^{z_2} \frac{dz}{\sqrt{z^4\pm1}}\right),\]
provided that the integrals are defined. We also have
\[\label{E int} \int_1^\infty \frac{dz}{\sqrt{z^4 +1}} = \frac{\Gamma(1/4)^2}{8 \sqrt{\pi}}\AND
 \int_1^\infty \frac{dz}{\sqrt{z^4 -1}}  = \frac{\sqrt{2}\Gamma(1/4)^2}{8\sqrt{\pi}},\]
which may be computed using complete elliptic integrals of the first kind; see \cite{BF}. The claims now follow from a straightforward computation.
\end{proof}

\begin{proposition}\label{Area of R} We have
\begin{align*}\Area(\R(X)) &= \frac{(2+2\sqrt{2})\Gamma(1/4)^2}{3\sqrt{\pi}}X^{3/4} + O\left(X^{3/4}(\log X)^{-1/4}\right),\\
\Area(\R^{(r_2)}(X))& = \fs(r_2)\cdot\frac{\sqrt{2} \Gamma(1/4)^2}{3 \sqrt{\pi}} X^{3/4} + O\left(X^{3/4} (\log X)^{-1/4}\right),\end{align*}
where $\fs(0) = \sqrt{2}$ and $\fs(1) = \fs(2) = 1$.
\end{proposition}
\begin{proof}Put $T(X) = (\log X)^{1/4}$. Since $\R^{(r_2)}(X)$ is symmetric along $x=0$ and
\[ \Area\left(\{(x,y)\in\bR^2 : \left(|y|\leq 1 \OR |x^2-y|\leq1\right) \AND |x|\leq 2X^{1/4}T(X)\}\right) = O\left(X^{1/4}T(X)\right),\]
we deduce from Lemma~\ref{bound on y} that
\begin{align*} \frac{1}{2}\Area(\R(X)) &= \int_{0}^{\sqrt{2}X^{1/4}}\int_{\frac{x^2-\sqrt{x^4+4X}}{2}}^{\frac{x^2+\sqrt{x^4+4X}}{2}}dydx
+\int_{\sqrt{2}X^{1/4}}^{2X^{1/4}T(X)} \int_{\frac{x^2-\sqrt{x^4+4X}}{2}}^{\frac{x^2-\sqrt{x^4-4X}}{2}}dydx\\
&\hspace{2.5cm}+\int_{\sqrt{2}X^{1/4}}^{2X^{1/4}T(X)} \int_{\frac{x^2+\sqrt{x^4-4X}}{2}}^{\frac{x^2+\sqrt{x^4+4X}}{2}}dydx + O\left(X^{1/4}T(X)\right),\\
\frac{1}{2}\Area(\R^{(1)}(X)) & = \int_{0}^{2X^{1/4}T(X)}\int_{\frac{x^2-\sqrt{x^4+4X}}{2}}^{0} dydx +O\left(X^{1/4}T(X)\right),\\
\frac{1}{2}\Area(\R^{(2)}(X)) &= \int_{0}^{2X^{1/4}T(X)}\int_{x^2}^{\frac{x^2+\sqrt{x^4+4X}}{2}}dydx+O\left(X^{1/4}T(X)\right).\end{align*}
By making a change of variables $x = \sqrt{2}X^{1/4}z$ and then applying Lemma~\ref{preliminary int}, we see that they simplify to the desired expressions, and the case $r_2=0$ follows as well.
\end{proof}

\begin{proposition}\label{Area of Ra}Let $a\in\bZ$ be non-zero. We have
\[ \Area(\R_a(X)) = O\left(X/a^2\right)\]
for $|a|\geq (X/4)^{1/4}/2$, and
\begin{align*}
\Area(\R_a^{(0)}(X)) &= \frac{1}{8|a|}\Area(\R^{(0)}(X/4)) + O\left(X^{1/2}\right)\\
\Area(\R_a^{(1)}(X)) &= \frac{1}{4|a|}\Area(\R^{(1)}(X/4))  + O\left(X^{1/2}\right)\\
\Area(\R_a^{(2)}(X)) &=\frac{1}{8|a|}\Area(\R^{(0)}(X/4))+\frac{1}{4|a|}\Area(\R^{(2)}(X/4)) + O\left(X^{1/2}\right)
\end{align*}
for $|a| < (X/4)^{1/4}/2$.
\end{proposition}
\begin{proof}Put $T(X) = (\log X)^{1/4}$. Observe that we have a map
\[\Phi_a:\R_a(X) \longrightarrow \R(X/4);\hspace{1em}\Phi_a(x,v) = (x,4av),\]
whose Jacobian matrix has determinant $4a$, and its image is given by
\begin{align*}\A_a(X/4) &= \{(x,y)\in \bR^2 : |y|> 4a^2, \, |x^2-y|\geq1,\, |y(x^2-y)|< X/4,\,\\&\hspace{6.5cm}\AND|x|\leq 2(X/4)^{1/4}T(X/4)\}.\end{align*}
Suppose that $|a|\geq (X/4)^{1/4}/2$. For all $(x,y)\in \A_a(X/4)$, we then have $|y|\geq (X/4)^{1/2}$, and 
\[|x|\in \left(\sqrt{y - \frac{X}{4y}},  \sqrt{y + \frac{X}{4y}}\right),\]
an interval of length $O\left(X/y^{3/2}\right)$. This implies that
\[ \Area(\A_a(X/4)) = O\left(\int_{4a^2}^{\infty}\frac{X}{y^{3/2}}dy\right) = O\left(X/a\right),\]
and the claim follows because $\Phi_a$ has determinant $4a$. Suppose next that 
$|a| < (X/4)^{1/4}/2$. For all $(x,y)\in \R(X/4) \setminus \A_a(X/4)$, we then have $|y| < (X/4)^{1/2}$, and
\[ |x|\in \left[ 0,\sqrt{y+\frac{X}{4|y|}}\right),\]
an interval of length $O(X^{1/2}/|y|^{1/2})$. This implies that
\[\Area(\R(X/4)\setminus\A_a(X/4)) = O\left(\int_{-4a^2}^{4a^2}\frac{X^{1/2}}{|y|^{1/2}} \right) = O(aX^{1/2}).\]
Now, by definition, it is clear that
\begin{align*}
\Phi_a(\R_a^{(0)}(X)) & = \A_a(X/4) \cap \R^{(0)}(X/4)\cap \{(x,y)\in\bR^2: ax<0\}, \\
\Phi_a(\R_a^{(1)}(X)) & = \A_a(X/4)\cap \R^{(1)}(X/4).
\end{align*}
Note that $\R^{(0)}(X)$ is symmetric along $x=0$ and the condition $ax<0$ only imposes that $x$ has opposite sign as $a$. We hence have
\begin{align*}
    \Area(\Phi_a(\R_a^{(0)}(X))) &= \frac{1}{2}\Area(\R^{(0)}(X/4)) + O\left(aX^{1/2}\right),\\
    \Area(\Phi_a(\R_a^{(1)}(X))) & = \Area(\R^{(1)}(X/4)) + O\left(aX^{1/2}\right).
\end{align*}
The claim then follows $\Phi_a$ has determinant $4a$, and the case $r_2=2$ then follows as well. 
\end{proof}

Combining Propositions \ref{Area of R} and \ref{Area of Ra}, we obtain:

\begin{corollary}\label{area cor}Let $a\in\bZ$ be non-zero with $|a| < X^{1/4}/2$. Then
\[    \Area(\R_a^{(r_2)}(X)) = \frac{1}{|a|} \frac{\mathfrak{r}(r_2)\sqrt{2}\Gamma(1/4)^2}{48\sqrt{\pi}}X^{3/4} + O\left(\frac{X^{3/4}(\log X)^{-1/4}}{|a|} + X^{1/2}\right),\]
where $\mathfrak{r}(0) =1$, $\mathfrak{r}(1) = \sqrt{2}$, and $\mathfrak{r}(2) = 1 + \sqrt{2}$ are as in Theorem \ref{real split MT}.
\end{corollary}

\section{Criteria for pre-maximal forms to be maximal}
\label{max section} 

In the previous section, we counted elements in the three families $V_{\J^{(i)}}(\bZ)$ for $i=1,2,3$, the so-called pre-maixmal forms. To obtain Theorem~\ref{real split MT}, we shall require more refined arithmetic considerations, and understand when a form $F$ is maximal, namely when its associated quartic ring $Q_F$ is a maximal. \\

Given any $F\in V_{\J^{(i)}}(\bZ)$, the quartic ring $Q_F$ is maximal if and only if $Q_F$ is \emph{maximal at $p$}, namely $\bZ_p\otimes_\bZ Q_F$ is maximal over $\bZ_p$, for every prime $p$. We have the following explicit criteria:

\begin{theorem}\label{J1 thm}Let $F\in V_{\J^{(1)}}(\bZ)$ be as in (\ref{F in J}).
\begin{enumerate}[(a)]
\item The quartic ring $Q_F$ is maximal at an odd prime $p$ if and only if $p$ does not divide all of $A,B,$ and $C$, as well as  $A,C,B^2-4AC\not\equiv0 \pmod{p^2}$.
\item The quartic ring $Q_F$ is maximal at $2$ if and only if $A,B,$ and $C$ are not all even, as well as $A,C\not\equiv0\pmod{4}$ and
\[\begin{cases}A+B+C\not\equiv0\hspace{-3mm}\pmod{4}&\mbox{when one of $A,B,C$ is even},\\
A+B\equiv0\hspace{-3mm}\pmod{4}\OR B+C\equiv0\hspace{-3mm}\pmod{4}&\mbox{when $A,B,C$ are all odd}.
\end{cases}\]
\end{enumerate}
\end{theorem}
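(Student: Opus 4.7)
The strategy is to invoke Bhargava's maximality criteria from \cite[Section 4.2]{HCL3}, summarized in Proposition~\ref{maximality types} below, applied to the pair $(U_0, V_F)$ with $V_F(x,y,z) = Cx^2+Ay^2+Bz^2$. Since maximality is a local property, I would work one prime $p$ at a time, asking whether $(U_0, V_F)$ can be $\GL_2(\bZ_p)\times\GL_3(\bZ_p)$-transformed so as to exhibit an overorder of $Q_F$ of index $p$ in $\mathcal{O}_{L_F}$. The virtue of working in the family $V_{\J^{(1)}}(\bZ)$ is that $U_0$ is already in a Bhargava-normal form, so the only ``moving part'' is $V_F$, which is diagonal.

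For an odd prime $p$: the form $U_0 = -xy + z^2$ is non-degenerate modulo $p$, and Bhargava's criteria essentially reduce to two types of tests. The first is primitivity: since $U_0$ has unit coefficients, the common-factor obstruction is visible only through $V_F$, yielding the condition $p\nmid\gcd(A,B,C)$. The second demands that at each isotropic line of $U_0$ modulo $p$, the restriction of $V_F$ not vanish modulo $p^2$. The form $U_0$ has three distinguished isotropic directions mod $p$ (the two coordinate axes $x=z=0$ and $y=z=0$ for the hyperbolic factor $-xy$, plus a diagonal direction coming from $z^2$), and evaluating $V_F$ at these directions yields the three conditions $A\not\equiv0$, $C\not\equiv0$, and $B^2-4AC\not\equiv0$ modulo $p^2$. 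The last arises because the diagonal isotropic direction leads exactly to the binary discriminant of the restriction of $V_F$ to the conic cut out by $U_0$.

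For $p=2$: this is the main obstacle. Modulo $2$ we have $U_0 \equiv xy+z^2$, whose isotropy behavior is substantially more degenerate, and Bhargava's criteria at $2$ are correspondingly more intricate. I would case-split on the parities of $(A,B,C)$. The all-even case is immediate: $2\mid V_F$ gives non-maximality. In the cases where exactly one of the coefficients is even, I would apply suitable $\GL_3(\bZ_2)$-transformations that fix $U_0$ (isotropic shears and coordinate swaps) to determine whether a factor of $4$ can be extracted from $V_F$ along an isotropic vector. This analysis produces both the $A,C\not\equiv0\pmod 4$ tests (from the coordinate axes, where $V_F$ equals $A$ or $C$) and the ``exactly-one-even'' test $A+B+C\not\equiv0\pmod 4$ (from the diagonal isotropic vector $(1,1,1)$). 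The all-odd case is subtlest: here one checks $V_F$-values along the two non-trivial isotropic directions of $U_0$ modulo $2$, arriving at the dichotomy $A+B\equiv0$ or $B+C\equiv0$ modulo $4$.

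The hard part will be the $p=2$ all-odd subcase, where I expect to enumerate the relevant $\GL_2(\bZ_2)\times\GL_3(\bZ_2)$-orbit representatives of $(U_0, V_F)$ essentially by hand, carefully verifying that the stated two-alternatives condition is precisely the criterion for maximality at $2$ rather than merely a necessary one. The analysis should parallel Bhargava's original treatment of the $p=2$ criteria in \cite[Section 4.2]{HCL3}, and will be streamlined by the $A\leftrightarrow C$ symmetry of $V_F$ (reflecting the $(x,y)\mapsto(y,x)$ symmetry of $U_0$) so that only half the subcases need be treated directly.
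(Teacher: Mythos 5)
Your proposal follows essentially the same route as the paper: both reduce to Bhargava's maximality criteria (Proposition~\ref{maximality types}) applied to the pair $(U_0,V_F)$ with $V_F$ diagonal, case-split on the vanishing of $A$, $C$, and $B^2-4AC$ modulo $p$ (equivalently on the splitting type $((U_0,V_F),p)$), and produce explicit $\GL_2\times\GL_3$ normalizations at the intersection points of the two conics — including the same sources for each condition ($A$, $C$ from the coordinate points of $U_0=0$, $B^2-4AC$ as the discriminant of $V_F$ restricted to the conic, the vector $(1,1,1)$ for the one-even case at $p=2$, and the $(2^2)$-type linear-independence test in the all-odd case). The only caveat is that your shorthand ``$V_F$ not vanish mod $p^2$ at each isotropic line'' must be read through the normalization of Proposition~\ref{maximality types} (the value mod $p^2$ is only meaningful after transforming to the stated normal form), but this is exactly what your planned case-by-case transformations accomplish.
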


\begin{theorem}\label{J2 thm}Let $F\in V_{\J^{(2)}}(\bZ)$ be as in (\ref{F in J}).
\begin{enumerate}[(a)]
\item The quartic ring $Q_F$ is maximal at an odd prime $p$ if and only if $p$ does not divide all of $A,B,$ and $C$, as well as  $4A\pm2B+C,B^2-4AC\not\equiv0 \pmod{p^2}$.
\item The quartic ring $Q_F$ is maximal at $2$ if and only if $A,B,$ and $C$ are not all even, as well as $2B+C\not\equiv0\pmod{4}$, and $A,A-B+C\not\equiv0\pmod{4}$ when $C$ is odd and $B$ is even.
\end{enumerate}
\end{theorem}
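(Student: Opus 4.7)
The plan is to invoke Bhargava's maximality criteria (Proposition~\ref{maximality types}) applied to the pair $(U_0,V_F)$ with
\[
V_F = Ax^2 + Ay^2 + (C+2A)z^2 + Bxz + Byz,
\]
and to handle the two parts by different routes, mirroring the strategy of Theorem~\ref{J1 thm}.

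For part~(a), I would exploit the transformation $T_0 = \left(\begin{smallmatrix}1 & 1 \\ -1 & 1\end{smallmatrix}\right)$ of (\ref{F 2 to 1}), which lies in $\GL_2(\bZ_p)$ for every odd prime~$p$. Since the maximality of $Q_F$ at $p$ is a local property preserved under $\GL_2(\bZ_p)$-equivalence, $Q_F$ is maximal at $p$ if and only if $Q_{F_{T_0}}$ is. By (\ref{F 2 to 1}), the form $F_{T_0}$ lies in $V_{\J^{(1)}}(\bZ_p)$ with coefficients
\[
A^* = (4A-2B+C)/4, \qquad B^* = (4A-C)/2, \qquad C^* = (4A+2B+C)/4.
\]
Applying Theorem~\ref{J1 thm}(a) to $F_{T_0}$ then reduces the proof to routine bookkeeping: $p$ divides all of $A^*,B^*,C^*$ iff $p$ divides all of $A,B,C$ (using the linear combinations $4A^* + 4C^* = 8A + 2C$ and $4C^* - 4A^* = 4B$); the identity $(B^*)^2 - 4A^*C^* = B^2 - 4AC$ holds by direct computation; and $A^* \not\equiv 0 \pmod{p^2}$ iff $4A-2B+C \not\equiv 0 \pmod{p^2}$, and likewise for $C^*$.

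For part~(b), since $T_0 \notin \GL_2(\bZ_2)$, I would instead apply Proposition~\ref{maximality types} at $p=2$ directly to $(U_0,V_F)$. The strategy is to reduce $(U_0,V_F)$ modulo $2$, and then modulo $4$ where necessary, and to test whether the pair can be $\GL_2(\bZ_2)\times\GL_3(\bZ_2)$-transformed onto any of the non-maximal normal forms in Bhargava's classification. The condition that $A,B,C$ are not all even rules out the coarsest obstruction coming from $2$-divisibility of $V_F$; the condition $2B+C \not\equiv 0 \pmod 4$ detects the next obstruction visible after one stage of transformation. In the sub-case $C$ odd, $B$ even, a further Bhargava normal form becomes accessible via an additional change of variables, and excluding it produces the supplementary conditions $A,\,A-B+C \not\equiv 0 \pmod 4$.

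The main obstacle will be the $p=2$ case analysis. One must run through every parity class of $(A,B,C)$ and determine, for each, which non-maximal normal forms in Proposition~\ref{maximality types} are reachable from $(U_0,V_F)$ by a $\GL_2(\bZ_2)\times\GL_3(\bZ_2)$-transformation. The sub-case ``$C$ odd, $B$ even'' is the delicate one: it is the unique parity class in which an extra normal form becomes accessible, and one must verify that in the remaining parity classes no further condition beyond $2B+C \not\equiv 0 \pmod 4$ is needed. Once this case analysis is complete, assembling the obstructions from each class gives precisely the stated criterion.
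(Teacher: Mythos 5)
Your proposal follows the paper's proof essentially verbatim: part (a) is handled by transporting the problem to the $V_{\J^{(1)}}$ family via $T_0$ (which the paper justifies by noting, via the proof of Wood's Theorem 5.1, that the $\GL_2(\bZ_p)$-action corresponds to a change of $\bZ_p$-basis of $Q_F$) and invoking Theorem~\ref{J1 thm}(a), and part (b) by a parity case analysis reducing $(U_0,V_F)$ to Bhargava's normal forms. The only thing not actually carried out is the $p=2$ case analysis itself --- the paper's Proposition~\ref{J2 prop} lists the four relevant parity classes and exhibits an explicit $T\in\GL_2(\bZ)$ and $S\in\GL_3(\bZ)$ for each --- but your outline of it, including the identification of ``$C$ odd, $B$ even'' as the unique class producing the extra conditions $A,\,A-B+C\not\equiv 0\pmod 4$, matches the paper exactly.
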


\begin{theorem}\label{J3 thm}Let $F\in V_{\J^{(3)}}(\bZ)$ be as in (\ref{F in J}).
\begin{enumerate}[(a)]
\item The quartic ring $Q_F$ is maximal at an odd prime $p$ if and only if $p$ does not divide all of $A,B,$ and $C$, as well as $B^2-4AC\not\equiv0 \pmod{p^2}$ and
\[\begin{cases} 
(4A-C)^2 + 4B^2\not\equiv0\hspace{-3mm}\pmod{p^2} &\mbox{when $p$ divides both $4A-C$ and $B$},\\
(4A-C)^2 + 4B^2\not\equiv0\hspace{-3mm}\pmod{p^2} &\mbox{when $p$ does not divide both $4A-C $ and $B$}.
\end{cases}\]
\item The quartic ring $Q_F$ is maximal at $2$ if and only if $A,B,$ and $C$ are not all even, as well as $C\not\equiv0\pmod{4}$, and $A,A-B+C\not\equiv0\pmod{4}$ when $C$ is odd and $B$ is even.
\end{enumerate}
\end{theorem}

Recall that the number of forms in $V_{\J^{(3)}}(\bZ)$ is negligible by Proposition~\ref{N3X}. Hence, to prove Theorem~\ref{real split MT} we actually do not need Theorem~\ref{J3 thm}, and we included it here just for completeness.\\

To prove the above theorems, we shall use a result of Bhargava \cite{HCL3}. For each \emph{nondegenerate} pair $(U,V)$ of integral ternary quadratic forms and each prime $p$, let $((U,V),p)$ denote the symbol as in \cite[(35)]{HCL3}; see \cite[Section 4.2]{HCL3} for the definition. The next proposition is proven in \cite[Section 4.2]{HCL3}; there are other possibilities for $((U,V),p)$, but the five listed below are the only ones which we need.

\begin{proposition} \label{maximality types} Let $(U,V)$ be a pair of integral ternary quadratic forms and let $Q$ denote its associated quartic ring. Let $p$ be a prime.
\begin{enumerate}[(a)]
\item If $(U,V)$ is degenerate at $p$, then $Q$ is not maximal at $p$.
\item If $((U,V),p) =(1^211)$ or $(1^22)$, then $(U,V)$ is $\GL_2(\bZ)\times\GL_3(\bZ)$-equivalent to a pair
\[ \left(\begin{pmatrix} u_{11} &* & * \\ * & * & * \\ * & * & *\end{pmatrix},\begin{pmatrix} v_{11} & * & * \\ \frac{v_{12}}{2} & * & * \\ \frac{v_{13}}{2} & * & *\end{pmatrix} \right)\mbox{ with }u_{11},v_{11},v_{12},v_{13}\equiv0\hspace{-3mm}\pmod{p},\]
and $Q$ is maximal at $p$ precisely when $v_{11}\not\equiv0\pmod{p^2}$.
\item If $((U,V),p) = (2^2), (1^4),$ or $(1^21^2)$, then $(U,V)$ is $\GL_2(\bZ)\times\GL_3(\bZ)$-equivalent to a pair
\[\left(\begin{pmatrix} u_{11}& * & * \\ \frac{u_{12}}{2} & u_{22} & * \\  *& * & *\end{pmatrix},\begin{pmatrix} v_{11} & * & * \\  \frac{v_{12}}{2}& v_{22} & * \\ \frac{v_{13}}{2} & \frac{v_{23}}{2}& *\end{pmatrix} \right)\mbox{ with } v_{11},v_{22},v_{12},v_{13},v_{23}\equiv0\hspace{-3mm}\pmod{p}.\]
Denote the pair above by $(U',V')$ and write $\S$ for the set of intersection points of $U'=0$ and $V'=0$ in the projective $2$-space over the finite field $\bF_p$ of $p$ elements. Given $P\in \S$, let $S_P\in\GL_3(\bZ)$ be any matrix which sends $(1,0,0)$ to $P$, and write
\[  (U',V')_{S_P}= \left(\begin{pmatrix} * &* & * \\ * & * & * \\ * & * & *\end{pmatrix},\begin{pmatrix} v_{P,11} &*& * \\ * & * & * \\ * & * & *\end{pmatrix} \right).\]
\end{enumerate}
\begin{enumerate}[(i)] 
\item If $((U,V),p) = (2^2)$, then $\S$ is empty, and $Q$ is maximal at $p$ precisely when the two vectors $(v_{11}/p, v_{12}/p, v_{22}/p)$ and $(u_{11}, u_{12}, u_{22})$ are linearly independent over $\bF_p$.
\item If $((U,V),p)=(1^4)$ or $(1^21^2)$, then $\S$ has cardinality one or two, respectively, and $Q$ is maximal precisely when $v_{P,11}\not\equiv0\pmod{p^2}$ for all $P\in\S$.
\end{enumerate}
\end{proposition}

\begin{proof}[Proof of Theorem \ref{J1 thm}, \ref{J2 thm}, and \ref{J3 thm}]Let $F\in V_{\J^{(i)}}(\bZ)$ be as in (\ref{F in J}). Note that:
\begin{itemize}
\item If $p$ divides all of $A,B,C$, then $Q_F$ is not maximal at $p$ by Proposition~\ref{maximality types} (a).
\item If $p$ does not divide $\Delta(F)$, then $Q_F$ is maximal at $p$.
\end{itemize}
For other primes $p$, the claims may be verified using Proposition~\ref{maximality types} by a straightforward computation. Let us just remark that for odd primes $p$, the case $i=2$ follows from the case $i=1$ in view of (\ref{map 2 to 1}) and (\ref{iota explicit}). 
\end{proof}

Theorems~\ref{J1 thm},~\ref{J2 thm}, and~\ref{J3 thm} have the following immediate consequence:

\begin{corollary}\label{fund disc}For $i=1,2,3$, let $F\in V_{\J^{(i)}}(\bZ)$ be as in (\ref{F in J}) such that $Q_F$ is maximal. Then, necessarily $B^2-4AC$  is a fundamental discriminant.
\end{corollary}

Together with (\ref{Ci}), they also imply the following corollary:

\begin{corollary}\label{V4 forms} For $i=1,2,3$, let $F\in V_{\J^{(i)}}(\bZ)$ be such that $\Delta(F)$ is a square and that $Q_F$ is maximal. Then, necessarily $F(x,y) = Ax^4 + Bx^2y^2 + Ay^4$.
\end{corollary}

We conclude this section by computing the local densities of the forms $F\in V_{\J^{(i)}}(\bZ)$ having maximal $Q_F$. We shall ignore the case $i=3$ because of Proposition \ref{N3X}. \\

For each $(A,B,C)\in\bZ^3$, for convenience let us write
\begin{align}\label{(A,B,C)}
 (A,B,C)_1 & = Ax^4 + Bx^2y^2 + Cy^4\\\notag
 (A,B,C)_2 & = Ax^4 + Bx^3y + (C+2A)x^2y^2 + Bxy^3 + Ay^4
\end{align}
for the corresponding form in $V_{\J^{(1)}}(\bZ)$ and $V_{\J^{(2)}}(\bZ)$, respectively. In view of Theorems \ref{J1 thm} and \ref{J2 thm}, for both $i=1,2$, whether $(A,B,C)_i$ is maximal at $p$ depends only the values of $A,B,C$ mod $p^2$. Let us consider the linear forms
\[ \ell_1(A,B,C) = A \AND \ell_2(A,B,C) = 4A-2B+C.\]
For each $a\in\bZ$ and each prime $p$, define
\begin{align*}
     \rho_a^{(i)}(p)& = \#\{(A,B,C)\in(\bZ/p^2\bZ)^3: \ell_i(A,B,C) \equiv a\ppmod{p^2}\\
     &\hspace{4.5cm}\AND (A,B,C)_i\mbox{ is not maximal at $p$}\}.
\end{align*}
Recall that the isomorphism $\iota$ in (\ref{map 2 to 1}) and (\ref{iota explicit}) has determinant $64$. Hence, for all odd primes $p$, it induces an isomorphism from $V_{\J^{(2)}}(\bZ/p^2 \bZ)$ to $V_{\J^{(1)}}(\bZ/p^2 \bZ)$ which preserves maximality of the forms at $p$. In particular, we have the equality
\[ \rho_a^{(1)}(p) = \rho_a^{(2)}(p) \mbox{ for all odd primes $p$}.\]
Notice that $(A,B,C)_i$ with $\ell_i(A,B,C) = a$ is never maximal when $a$ is not square-free. Hence, it suffices to consider the square-free values of $a$, and a direct computation using Theorems \ref{J1 thm} and \ref{J2 thm} yield:

\begin{proposition} \label{rhoa eva} Let $a\in\bZ$ be square-free. We have
\[ \rho_{a}^{(1)}(p) = \rho_{a}^{(2)}(p) = p(2p-1)\mbox{ for all odd primes $p$, and  } \rho_{a}^{(1)}(2) = 8,\, \rho_{a}^{(2)}(2) = 4.\]
Thus, the density of pairs $(b,c)$ in $\bZ^2$ such that $(a,b,c)_i$ is maximal at a prime $p$ is $1 - (2p-1)p^{-3}$ for $p$ odd and $i=1,2$, $1/2$ for  $p = 2$ and $i = 1$, and $3/4$ for $p = 2$ and $i = 2$.
\end{proposition}

Finally, in view of Corollary \ref{V4 forms}, for each prime $p$, let us also define
\[ \rho_0(p) = \#\{(A,B)\in (\bZ/p^2\bZ)^2: Ax^4 + Bx^2y^2 + Ay^4\mbox{ is not maximal at $p$}\}.\]
Using Theorem \ref{J1 thm}, it is easy to verify that:

\begin{proposition} \label{rho eva} We have $\rho_{0}(p) = p(4p - 3)$ for all odd primes $p$, and $\rho_{0}(2) = 9$.
Thus, the density of pairs $(a,b)$ in $\bZ^2$ such that $ax^4 + bx^2 y^2 + ay^4$ is maximal at a prime $p$ is $1 - (4p-3)p^{-3}$ for $p$ odd, and $7/16$ for $p=2$. 
\end{proposition}

\section{Counting maximal dihedral binary quartic forms by conductor}
\label{MT proof}

In this section, we shall prove our main result Theorem~\ref{real split MT}. Recall from Proposition \ref{N3X} that the contribution from $V_{\J^{(3)}}(\bZ)(X)$ is negligible. For $i=1,2$, we shall modify the arguments in Subsection \ref{pre-count section} to enumerate only the maximal forms 
\[ V_{\J^{(i)}}^{\mmax}(\bZ)(X) = \{F\in V_{\J^{(i)}}^{\mmax}(\bZ) : 0 < |\C_{\J^{(i)}}(F)| < X\},\]
and this amounts to applying a square-free sieve by Theorems \ref{J1 thm} and \ref{J2 thm}. We shall also prove that the forms which have abelian Galois group or are reducible contribute to an error term.


\subsection{A square-free sieve} 
\label{sieve}

In Subsection \ref{pre-count section}, we saw that the enumerations of $V_{\J^{(1)}}(\bZ)(X)$ and $V_{\J^{(2)}}(\bZ)(X)$ may be reduced to counting integral points in the set $\R_a(X)$ defined in (\ref{Ra definition}), and then summing over $1\leq |a|\ll X^{1/4}(\log X)^{1/4}$. To extract only the points which correspond to maximal forms, by Theorems \ref{J1 thm} and \ref{J2 thm}, we essentially have to apply a square-free sieve. \\

Recall the notation in (\ref{(A,B,C)}) and let us focus on $i=1$. For each non-zero $a\in\bZ$, we wish to determine the number
\[ M_{a}(X) = \#\{(b,c)\in \R_a(X)\cap \bZ^2 : (a,b,c)_1 \mbox{ is maximal}\}. \]
We introduce the following two quantities:
\begin{align*}
M_{a,1}(X) & = \#\{(b,c)\in\R_a(X)\cap\bZ^2: (a,b,c)_1 \mbox{ is not maximal at $p$}\ \Rightarrow p > \xi(X)\},\\
M_{a,2}(X) & = \# \{(b,c) \in \R_a(X) \cap \bZ^2 : (a,b,c)_1\text{ is not maximal at } p \Rightarrow p > \xi(X)\\
&\hspace{3cm}\AND \exists p > \xi(X) \text{ such that } (a,b,c)_1 \text{ is not maximal at } p \},
\end{align*}
where $\xi(X)$ shall be suitably chosen later. We note now that for all $\delta > 0$ we can choose $\xi(X)$ so that if $m$ is square-free and divisible only by primes at most $\xi(X)$, then $m \ll_\delta X^{\delta}$. Indeed, we can choose $\xi(X) = (\delta/2) \log X$ by the well-known fact that
\[\prod_{p \leq z} p \leq e^{2 z}\]
for any positive number $z$. \\

We have the fundamental sieve inequalities
\begin{equation}\label{sieve inequality} M_{a,1}(X) - M_{a,2}(X) \leq M_a(X) \leq M_{a,1}(X).\end{equation}
It then suffices to show that $M_{a,2}(X)$ is small compared to $M_{a,1}(X)$. Note that we only need to consider square-free values of $a$, for otherwise $M_{a}(X)$ is zero by Theorem \ref{J1 thm}. 

\begin{lemma}\label{bound on p}Let $a\in\bZ$ be non-zero and square-free. For any $(b,c)\in\R_a(X)\cap \bZ^2$, the form $(a,b,c)_1$ is not maximal at a prime $p$ only if $p \ll X^{1/4}(\log X)^{1/4}$.
\end{lemma}
\begin{proof}Suppose that $(a,b,c)_1$ is not maximal at a prime $p$. Since $a$ is square-free, by Theorem \ref{J1 thm} this means that $p^2$ divides either $c$ or $b^2-4ac$. But
\[ |b| = O\left(X^{1/4}(\log X)^{1/4}\right) \AND |c| = O\left(X^{1/2}(\log X)^{1/2}a^{-1}\right)\]
by Lemma \ref{proj control}, so we see that the stated bound for $p$ holds.
\end{proof}

Let $a\in\bZ$ be non-zero. For  $m\in\bN$, let us put
\[ M_a(m;X) = \#\{(b,c)\in \R_a(X)\cap \bZ^2: (a,b,c)_1 \mbox{ is not maximal at $p$ for all $p\mid m$}\}\]
if $m$ is square-free, and $M_a(m,X) = 0$ otherwise. We then have
\[M_a(X) = \sum_{\substack{m \in \bN }} \mu(m) M_a(m; X)\AND M_{a,1}(X) = \sum_{\substack{m \in \bN \\ p \mid m \Rightarrow p \leq \xi(X)}} \mu(m) M_a(m ; X),\]
where $\mu(\cdot)$ is the M\"{o}bius function. Let us also define the local density
\[ \rho_a(m) = \#\{(b,c)\in (\bZ/m^2\bZ)^2: (a,b,c)_1\mbox{ is not maximal at $p$ for all $p\mid m$}\}\]
if $m$ is square-free, and $\rho_a(m) = 0$ otherwise. Note that $\rho_a(\cdot)$ is multiplicative, and $\rho_a(p) = \rho_a^{(1)}(p)$ for all primes $p$, where $\rho_a^{(1)}(\cdot)$ is defined as in Section \ref{max section}. By Proposition \ref{rho eva}, for $a$ square-free, we then have $\rho_a(p) = p(2p-1)$ whenever $p$ is odd.

\begin{proposition} \label{Ma2 prop} For any non-zero and square-free $a\in\bZ$, we have
\[ M_{a,2}(X) = O \left(\frac{X^{3/4}}{a \xi(X)} + \frac{X^{1/2}(\log X)^{1/2}}{\log X} \right).\]
\end{proposition}

\begin{proof} Put $T(X) = (\log X)^{1/4}$. Lemma \ref{bound on p} implies that
\[\label{Ma2 est} M_{a,2}(X) \ll \sum_{\xi(X) < p \ll X^{1/4}T(X)} M_a(p; X).\]
For each prime $p$, there are $O(p^2)$ elements $(b,c) \in (\bZ/p^2 \bZ)^2$ such that $(a,b,c)_1$ is not maximal. Since $a$ is square-free by assumption, we know from Theorem \ref{J1 thm} that $(a,b,c)_1$ is not maximal at odd $p$ if and only if one of the following holds:
\[ (1)\hspace{2mm}p\mid \gcd(a,b,c),\hspace{5mm}(2)\hspace{2mm} p^2 \mid c,\hspace{5mm} (3)\hspace{2mm}p^2 \mid b^2 - 4ac.\]
We shall require the following trivial but important observation: for any pair of positive integers $k,m$ and an interval $I$ of length $Y$, we have: 
\begin{equation} \label{int cong} \# \{n \in I \cap \bZ : n\equiv k \ppmod{m} \} = \frac{Y}{m} + O(1).\end{equation}
\begin{enumerate}[(1)]
\item For $p\mid \gcd(a,b,c)$, we note that there are only $\omega(a)$ possibilities for $p$, where $\omega(\cdot)$ is the number of distinct prime divisors. For each such $p$ and $|b| \leq 2X^{1/4}$, there are then $O\left(b^2/(ap) + 1\right)$ possibilities for $c = pc^\prime$, say. Similarly, for each $b$ with $|b| \gg X^{1/4}$ there are $O\left(X/(apb^2) + 1 \right)$ possibilities for $c$. We are then led to the sum
\[ \label{p div a}\sum_{\substack{p \mid a \\ p > \xi(X)}} \left(\sum_{\substack{|b| \leq 2X^{1/4} \\ p \mid b}} O \left(\frac{b^2}{ap} + 1  \right) + \sum_{\substack{2X^{1/4} < |b| \ll X^{1/4} T(X) \\ p \mid b}} O \left(\frac{X}{apb^2} + 1 \right) \right).\]
We note that since $p \mid a$ and $|a| \ll X^{1/4} T(X)$ by assumption, we obtain the estimate
\[O \left(\frac{X^{3/4}}{a \xi(X)} + X^{1/4} T(X)  \right),\]
which is sufficient for our purposes. 
\item For $p^2 \mid c$, we proceed as before and note that for each $|b| \leq 2X^{1/4}$ there are $O\left(b^2/(ap^2) + 1\right)$ possibilities for $c$, and for each $|b| > 2X^{1/4}$ there are $O \left(X/(ap^2 b^2) + 1 \right)$ possibilities. We note that since $ |ac| = O \left(X^{1/2} T(X)^2 \right)$ by Lemma \ref{proj control} and $p^2 \mid c$, it follows that $p \ll X^{1/4} T(X)/|a|$. Hence we are led to the sum
\[ \label{p2 div c} \sum_{\xi(X) < p \ll X^{1/4} T(X)/a} \left(\sum_{|b| \leq 2X^{1/4}} O \left(\frac{b^2}{ap^2} + 1  \right) + \sum_{2X^{1/4} < |b| \ll X^{1/4} T(X)} O \left(\frac{X}{ap^2 b^2} + 1 \right)  \right).\]
This then gives the estimate
\[O \left(\frac{X^{3/4}}{a \xi(X)} + \frac{X^{1/2}T(X)^2}{a \log X} \right).\]
\item For $p^2 \mid  b^2 - 4ac$, we may assume that $p\nmid a$, for otherwise $p\mid \gcd(a,b,c)$ because $a$ is square-free, and we are in the first case. We may further assume that $p\nmid c$, for otherwise $p^2 \mid c$ and we are in the previous case. Note that $b^2$ and $4ac$ are both bounded by $O (X^{1/2} T(X)^2)$ by Lemma \ref{proj control}, and since $p^2 \mid b^2 - 4ac$, we must have $p \ll X^{1/4} T(X)$. The argument is then very similar to the previous case, and we get the sum
\[\label{p2 div bac} \sum_{\xi(X) < p \ll X^{1/4} T(X)} \left(\sum_{|b| \leq 2X^{1/4}} O \left(\frac{b^2}{ap^2} + 1 \right) + \sum_{2X^{1/4} < |b| \ll X^{1/4} T(X)} O \left(\frac{X}{ap^2 b^2} + 1 \right) \right).\]
The corresponding estimate is 
\[O \left(\frac{X^{3/4}}{a \xi(X)} + \frac{X^{1/2} T(X)^2}{\log X} \right).\]
\end{enumerate}This completes the proof.
\end{proof}

Our next task is to estimate $M_{a,1}(X)$ and show that, with suitably chosen $\xi(X)$, the sum
\[\sum_{1 \leq |a| \ll X^{1/4} (\log X)^{1/4}} M_{a,1}(X)\]
will constitute the main term. From Lemma \ref{proj control} and (\ref{int cong}), we deduce the following estimate:
\[ \label{Mam est} M_a(m; X) = \frac{\rho_a(m) \#(\R_a(X) \cap \bZ^2)}{m^4} + O \left(\rho_a(m) \left(\frac{X^{1/4} (\log X)^{1/4}}{m^2} + \frac{X^{1/2} (\log X)^{1/2}}{am^2} + 1\right)\right). \]
By the multiplicativity of $\rho_a(\cdot)$ and the estimate $\rho_a(m) = O_\ep \left(m^{2+\ep} \right)$, it follows that
\begin{align*}
M_{a,1}(X)  & =  \#(\R_a(X) \cap \bZ^2) \prod_{p \leq \xi(X)} \left(1 - \frac{\rho_a(p)}{p^4} \right) \\
& \hspace{2cm} + \sum_{\substack{m \in \bN \\ p | m \Rightarrow p \leq \xi(X)}} O_\ep \left( \frac{X^{1/2} (\log X)^{1/2} m^\ep}{|a|} + m^{2 + \ep} \right).\end{align*}
By our earlier remark prior to (\ref{sieve inequality}), we note that the latter sum over $m$ can be majorized by the sum over $m \ll_\delta X^\delta$ for any $\delta > 0$, whence
\[ \sum_{\substack{m \in \bN \\ p | m \Rightarrow p \leq \xi(X)}} O_\ep \left( \frac{X^{1/2} (\log X)^{1/2} m^\ep}{|a|} + m^{2 + \ep} \right)  = O_{\delta, \ep} \left( \sum_{m \ll_\delta X^\delta}  \left(\frac{X^{1/2} (\log X)^{1/2} m^\ep}{|a|} + m^{2 + \ep} \right) \right) \]
and it follows that that
\[\sum_{|a| \ll X^{1/4}(\log X)^{1/4}} O_{\delta, \ep} \left( \frac{X^{1/2} (\log X)^{1/2} m^\ep}{|a|} + m^{2 + \ep} \right) =  O_{\delta,\ep}\left(X^{1/2+\delta+\ep}+X^{1/4+3\delta+\ep}\right). \]
We choose the optimal value of $\delta$ to get an optimal error term for the above expression; this requires $\delta = 1/8$, and correspondingly $\xi(X)=(\log X)/16$. We then deduce that
\[\label{Ma1 est fin} M_{a,1}(X) = \#(\R_a(X) \cap \bZ^2) \cdot \prod_{p \leq \xi(X)} \left(1 - \frac{\rho_a(p)}{p^4} \right) + O_\ep \left(X^{1/2 + \ep} |a|^{-1} + X^{3/8 + \ep} \right).\]
Putting the estimates for $M_{a,1}(X), M_{a,2}(X)$ and noting (\ref{sieve inequality}), we obtain
\begin{align*}M_a(X) & = \# (\R_a(X) \cap \bZ^2) \cdot  \prod_{p \leq \xi(X)} \left(1 - \frac{\rho_a(p)}{p^4} \right) + O_\ep \left(X^{5/8 + \ep} \right) \\
&\hspace{4.5cm}+ O \left(\frac{X^{3/4} \log X}{\xi(X)} +  X^{3/8} (\log X)^{3/8}\right). 
\end{align*}
Finally, we note that the infinite product $\prod_{p > \xi} (1 - \rho_a(p) p^{-4})$ satisfies 
\[\prod_{p > \xi} \left(1 - \frac{\rho_a(p)}{p^4} \right) = O \left( \exp \left(- \sum_{p > \xi} p^{-2} \right) \right) = O \left(1 + \xi^{-1} \right). \]
It thus follows that 
\[ \prod_p \left(1 - \frac{\rho_a(p)}{p^4} \right)  = \prod_{p \leq \xi(X)} \left(1 - \frac{\rho_a(p)}{p^4} \right) \left(1 + O \left(\xi(X)^{-1} \right) \right),
\]
and by substituting $\rho_a(p) = \rho_a^{(1)}(p)$, we then obtain 
\[\label{Max est pen} M_a(X) = \# (\R_a(X) \cap \bZ^2) \cdot  \prod_{p} \left(1 - \frac{\rho_a^{(1)}(p)}{p^4} \right) + O_\ep \left(X^{3/8 + \ep} + X^{3/4}a^{-1}\right), \]
where we have suppressed terms strictly smaller than the ones displayed.\\

The above arguments apply equally well if we take into account the real splitting types of the integral binary quartic forms, namely if we consider instead
\[ M_a^{(r_2)}(X) = \#\{(b,c) \in \R_a^{(r_2)}(X)\cap \bZ^2 : (a,b,c)_1\mbox{ is maximal}\}.\]
For each non-zero and square-free $a\in\bZ$, we have the more refined formula
\[M_a^{(r_2)}(X) = \# (\R_a^{(r_2)}X) \cap \bZ^2) \cdot  \prod_{p} \left(1 - \frac{\rho_a^{(1)}(p)}{p^4} \right) + O_\ep \left(X^{3/8 + \ep} + X^{3/4}a^{-1}\right).\]
In fact, using the same proof we may easily generalize this to count maximal forms in $V_{\J^{(2)}}(\bZ)$, that is if we consider the number
\[ M_a^{\star,(r_2)}(X) = \#\{(b,c) \in \R_a^{(r_2)}(X)\cap \bZ^2 : (a,b,c)\in\Lambda_\iota\AND\iota^{-1}(a,b,c)_2\mbox{ is maximal}\}.\]
Here $\iota$ is the map in (\ref{iota explicit}) and $\Lambda_\iota$ is the lattice in $\bZ^3$ defined after it. Recall that $\det(\Lambda_\iota) = 64$, and notice that the number of pairs $(b,c)\mod{16}$ with  $(a,b,c)\in\Lambda_\iota$ is independent of $a$. Thus, the condition that $(a,b,c)\in\Lambda_\iota$ contributes to an extra factor of $1/64$. For each non-zero and square-free $a\in\bZ$, a similar argument as above then shows that
\[M_a^{\star,(r_2)}(X) = \# (\R_a^{(r_2)}(X) \cap \bZ^2) \cdot\frac{1}{64}\cdot  \prod_{p} \left(1 - \frac{\rho_a^{(2)}(p)}{p^4} \right) + O_\ep \left(X^{3/8 + \ep} + X^{3/4}a^{-1}\right),\]
where $\rho_a^{(2)}(\cdot)$ is defined as Section \ref{max section}. Using the values of $\rho_a^{(1)}(p)$ and $\rho_a^{(2)}(p)$ given by Proposition \ref{rho eva}, we may rewrite the above formulae as
\begin{align}\label{precount}
    M_{a}^{(r_2)}(X) & = \#(\R_a^{(r_2)}(X)\cap\bZ^2)\cdot \frac{4}{5}\prod\left(1-\frac{2p-1}{p^3}\right) + O_\ep \left(X^{3/8 + \ep} + X^{3/4}a^{-1}\right),\\\notag
    M_{a}^{\star,(r_2)}(X) & = \#(\R_a^{(r_2)}(X)\cap\bZ^2)\cdot \frac{1}{64}\cdot\frac{6}{5}\prod\left(1-\frac{2p-1}{p^3}\right) +  O_\ep \left(X^{3/8 + \ep} + X^{3/4}a^{-1}\right).
\end{align}
We are now almost ready to prove Theorem \ref{real split MT}.

\subsection{From counting forms to counting equivalence classes} So far we have been counting integral binary quartic forms, but what we actually want are their $\GL_2(\bZ)$-equivalence classes. The next lemma deals with this issue:

\begin{lemma}\label{representative cor}For $i=1,2$, let $F\in V_{\J^{(i)}}(\bZ)$ be such that $\Delta(F)$ is not a square. Then the forms in $V_{\J^{(i)}}(\bZ)$ which are $\GL_2(\bZ)$-equivalent to $F$ are exactly $F_T$, where
\begin{equation}\label{T choices}
\pm T = \begin{pmatrix} 1&0 \\ 0&1 \end{pmatrix},\begin{pmatrix} 1 & 0 \\ 0 &-1 \end{pmatrix},\begin{pmatrix} 0 & 1 \\ 1 & 0 \end{pmatrix},\begin{pmatrix} 0 & 1\\ -1 &0 \end{pmatrix}.
\end{equation}
In particular, when $-\Delta(F)$ is also not a square, for $i=1,2$, respectively, the form $F$ is $\GL_2(\bZ)$-equivalent to a unique element in $V_{\J^{(i)}}(\bZ)$ of the shape
\[\begin{cases}ax^4 + bx^2y^2 + cy^4 \mbox{ with }|c| > |a|,\\
ax^4 + bx^3y + (c+2a)x^2y^2 + bxy^3 + ay^4\mbox{ with }|4a+2b+c| > |4a-2b+c|.\end{cases}\]
\end{lemma}
\begin{proof}Suppose that $T\in \GL_2(\bZ)$ and $F_T\in V_{\J^{(i)}}(\bZ)$. This means that $TM_{\J^{(i)}}T^{-1}$ also fixes $F$. Since $F\in V_{\J^{(i)}}(\bZ)$ and $\Delta(F)$ is not square by hypothesis, from \cite[Theorem 1.1]{TX} we deduce that $TM_{\J^{(i)}}T^{-1} = \pm M_{\J^{(i)}}$. For both $i=1,2$, a simple calculation shows that $\pm T$ must be one of the four stated matrices. By considering their action on binary quartic forms, we then see from (\ref{F in J}) that for $i=1,2$, respectively, either $F$ or $F_T$ must have the shape
\[\begin{cases}ax^4 + bx^2y^2 + cy^4 \mbox{ with }|c| \geq |a|,\\
ax^4 + bx^3y + (c+2a)x^2y^2 + bxy^3 + ay^4\mbox{ with }|4a+2b+c| \geq |4a-2b+c|.\end{cases}\]
In the case that $-\Delta(F)$ is also not a square, these inequalities must be strict by (\ref{Ci}).
\end{proof}

Lemma \ref{representative cor} implies that the condition $|v|>|a|$ in the definition of $\R_a(X)$ as in (\ref{Ra definition}) already takes $\GL_2(\bZ)$-equivalence into account. We now prove:

\begin{proposition}\label{max count}For $i=1,2$, let $N_{i,\mmax}^{(r_2)}(X)$ be the number of $\GL_2(\bZ)$-equivalence classes of forms $F\in V_{\J^{(i)}}^{\mmax}(\bZ)(X)$ such that $F(x,1)$ has exactly $4-2r_2$ real roots. Then
\begin{align*}N_{1,\mmax}^{(r_2)}(X) & = \frac{\fr(r_2)}{\zeta(2)}\frac{\sqrt{2}\Gamma(1/4)^2}{120\sqrt{\pi}}\prod_p\left(1-\frac{2p-1}{p^3}\right)X^{3/4}\log X + O(X^{3/4}(\log X)^{3/4}),\\
N_{2,\mmax}^{(r_2)}(X) & =\frac{\fr(r_2)}{\zeta(2)}\frac{\sqrt{2}\Gamma(1/4)^2}{80\sqrt{\pi}}\prod_p\left(1-\frac{2p-1}{p^3}\right)X^{3/4}\log X + O(X^{3/4}(\log X)^{3/4}),\end{align*}
where $\fr(r_2)=1$, $\fr(1) \sqrt{2}$, and $\fr(2) = 1+\sqrt{2}$ are as in Theorem \ref{real split MT}.
\end{proposition}

\begin{proof}The proof is very similar to that of Proposition \ref{N1X count}. We shall prove in Proposition \ref{error theorem} (a) that the number of forms $F\in V_{\J^{(i)}}^{\mmax}(\bZ)(X)$ with $|\Delta(F)|$ a square is an error term $O(X^{2/3})$. By Lemmas \ref{tail cut} and \ref{representative cor}, in the notation of the previous subsection, we then have
\begin{align*}
     N_{1,\mmax}^{(r_2)}(X) & = \sum_{1\leq |a|\ll X^{1/4}(\log X)^{1/4}} M_{a}^{(r_2)}(X) + O\left(X^{3/4}(\log X)^{3/4}\right),\\
     N_{2,\mmax}^{(r_2)}(X) & = \sum_{1\leq |a|\ll X^{1/4}(\log X)^{1/4}} M_{a}^{\star,(r_2)}(256X) + O\left(X^{3/4}(\log X)^{3/4}\right),
\end{align*}
where the bound on $|a|$ comes from (\ref{bound on a}). Also, by Theorems \ref{J1 thm} and \ref{J2 thm}, both $M_{a}^{(r_2)}(X)$ and $M_a^{\star,(r_2)}(X)$ are zero when $a$ is not square-free. Now, recall (\ref{precount}) and observe that
\[ \sum_{1\leq |a|\ll X^{1/4}(\log X)^{1/4}} O_\ep (X^{3/8 + \ep} + X^{3/4}|a|^{-1}) = O\left(X^{3/4}(\log X)^{3/4}\right).\]
It thus follows that
\begin{align*}
    N_{1,\mmax}^{(r_2)}(X) & = \frac{4\P}{5}\sum_{\substack{{1\leq |a|\ll X^{1/4}(\log X)^{1/4}}\\a\text{ \tiny is square-free}}}\#(\R_a^{(r_2)}(X)\cap\bZ^2) + O\left(X^{3/4}(\log X)^{3/4}\right),\\
     N_{2,\mmax}^{(r_2)}(X) & = \frac{1}{64}\cdot\frac{6\P}{5}\sum_{\substack{{1\leq |a|\ll X^{1/4}(\log X)^{1/4}}\\a\text{ \tiny is square-free}}}\#(\R_a^{(r_2)}(256X)\cap\bZ^2) + O\left(X^{3/4}(\log X)^{3/4}\right),
\end{align*}
where $\P$ is the infinite product in the statement of the proposition. As we saw in Subsection \ref{pre-count section}, the number $\#(\R_a^{(r_2)}(X)\cap\bZ^2)$ may be estimated by the area of $\R_a^{(r_2)}(X)$, which was computed in Subsection \ref{area comp}. The same calculation as in the proof of Proposition \ref{N1X count} then shows that
\begin{align*} N_{1,\mmax}^{(r_2)}(X) & = \frac{1}{\zeta(2)}\cdot\frac{4\P}{5}\cdot \frac{\fr(r_2)\sqrt{2}\Gamma(1/4)^2}{96\sqrt{\pi}} X^{3/4}\log X+
O\left(X^{3/4}(\log X)^{3/4}\right),\\
N_{2,\mmax}^{(r_2)}(X) & = \frac{1}{\zeta(2)}\cdot\frac{1}{64}\cdot\frac{6\P}{5}\cdot\frac{\fr(r_2)\sqrt{2}\Gamma(1/4)^2}{96\sqrt{\pi}} (256X)^{3/4}\log(256 X) + O\left(X^{3/4}(\log X)^{3/4}\right),\end{align*}
where the factor $1/\zeta(2)$ comes from the fact that  we are only summing over the square-free $a$. They simplify to the desired expressions.
\end{proof}

\begin{proof}[Proof of Theorem \ref{real split MT}] Recall from Proposition \ref{N3X} that the forms in $V_{\J^{(3)}}(\bZ)(X)$ contribute to an error term $O(X^{3/4})$. As we shall prove in the next subsection, the forms in $V_{\J^{(1)}}(\bZ)(X)$ and $V_{\J^{(2)}}(\bZ)(X)$ which have abelian Galois group or are reducible also have negligible contribution; see Proposition \ref{error theorem}. Hence, we deduce that
\[ N_{D_4}^{(r_2)}(X) = N_{1,\mmax}^{(r_2)}(X) + N_{2,\mmax}^{(r_2)}(X) + O\left(X^{3/4}\right),\]
and the theorem follows from adding the two formulae in Proposition \ref{max count}.
\end{proof}

\subsection{Maximal forms which are not dihedral} 
\label{max abel forms}

In this subsection, we shall complete the proof of Theorem \ref{real split MT} by showing that for $i=1,2$, the forms $F\in V_{\J^{(i)}}^{\mmax}(\bZ)$ for which 
\begin{itemize}
    \item $\Delta(F)$ or $-\Delta(F)$ is a square,
    \item $F$ is irreducible with $\Gal(F)\simeq V_4$,
    \item $F$ is irreducible with $\Gal(F)\simeq C_4$,
    \item $F$ is reducible,
\end{itemize}
contribute only to an error term. The first two cases clearly collapse into one. By (\ref{Ci}) as well as Theorems \ref{J1 thm} and \ref{J2 thm}, we have the following:

\begin{lemma}\label{square prop}For $i= 1,2$, let $F\in V_{\J^{(i)}}^{\mmax}(\bZ)$ be as in (\ref{F in J}) such that $|\Delta(F)|$ is a square. Then, we have $C = \pm A$ for $i=1$, and $C=-4A$ or $B=0$ for $i=2$.
\end{lemma}

To deal with the irreducible forms $F$ with $\Gal(F)\simeq C_4$, we use the next two lemmas:

\begin{lemma}\label{C4 vs D4}For $i=1,2,3$, an irreducible form $F\in V_{\J^{(i)}}(\bZ)$ has cyclic Galois group if and only if $\Delta(F)$ is not a square but $\C_{\J^{(i)}}(F)$ is a square in $\bQ$.
\end{lemma}
\begin{proof}The proof is analogous to that of \cite[Proposition 3.6]{TX}. Suppose that $F$ is as in (\ref{generic F}), and define the \emph{cubic resolvent polynomial} of $F$ to be
\[ R_F(x) = a_4^3X^3 - a_4^2a_2X^2 + a_4(a_3a_1-4a_4a_0)X - (a_3^2a_0 + a_4a_1^2 - 4a_4a_2a_0).\]
Since $F\in V_{\J^{(i)}}(\bZ)$, by Proposition \ref{criterion} we know that $\Gal(F)$ is small, and so $R_F(x)$ is reducible over $\bQ$. We may assume that $\Delta(F)$ is not a square, for otherwise $\Gal(F)\simeq V_4$. Then $R_F(x)$ has a unique root $r_F$ in $\bQ$, and let us put
\[ \theta_1(F) = (a_3^2 - 4a_4(a_2-r_Fa_4))\Delta(F) \AND \theta_2(F) = a_4(r_F^2a_4-4a_0)\Delta(F).\]
We have the well-known criterion, by \cite[Corollary 4.3]{Con} for example, that
\[ \Gal(F)\simeq C_4 \mbox{ if and only if }\theta_1(F),\theta_2(F)=\square\mbox{ in }\bQ.\]
Now, for $F$ having the shape in (\ref{F in J}), a simple calculation yields
\[ r_F = \begin{cases} B/A & \mbox{for $i=1$}, \\ (-1)^i2 & \mbox{for $i=2,3$},\end{cases}\AND \{\theta_1(F),\theta_2(F)\} = \{0,(B^2 - 4AC)\Delta(F)\}\]
in all three cases. The claim now follows from (\ref{Ci}).
\end{proof}


\begin{lemma}\label{C4 prop}For $i=1,2$, let $F\in V_{\J^{(i)}}^{\mmax}(\bZ)$ be as in (\ref{F in J}) such that $|\C_{\J^{(i)}}(F)|$ is a square. Let $D = \gcd(N_1(F),N_2(F))$, where 
\[ (N_1(F),N_2(F))  = \begin{cases} (A,C) & \mbox{for $i=1$},\\ (4A+C,2B) &\mbox{for $i=2$}.\end{cases} \]
Write $N_1(F) = Dn_1$ and $N_2(F)=Dn_2$. Then, there exist $s,t\in\{0,1\}$ such that
\[B^2 - 4AC  = \begin{cases}\pm 4^s n_1n_2 &\mbox{for $i=1$}, \\ \pm4^{s-t}(n_1-n_2)(n_1+n_2)& \mbox{for $i=2$}.\end{cases}\]
\end{lemma}

\begin{proof}
For $i=1$, observe that $n_1$ and $n_2$ are square-free by Theorem~\ref{J1 thm}. For $i=2$, similarly we have that $n_1-n_2$ and $n_1+n_2$ are square-free by Theorem~\ref{J2 thm}, and also note that their greatest common divisor is equal to $2^t$ for $t\in\{0,1\}$. By (\ref{Ci}), we have
\begin{equation}\label{Ci N12}\C_{\J^{(i)}}(F) = \begin{cases}
16N_1(F)N_2(F)(B^2-4AC)&\mbox{for $i=1$},\\
(N_1(F) - N_2(F))(N_1(F) + N_2(F))(B^2-4AC)&\mbox{for $i=2$}.\end{cases}\end{equation}
Since $|\C_{\J^{(i)}}(F)|$ is a square, we deduce that there exists $m\in\bZ$ such that $|m|$ is a square and
\[ B^2 - 4AC = \begin{cases} mn_1n_2& \mbox{for $i=1$},\\ 
m(n_1 - n_2)(n_1+n_2)/4^t&\mbox{for $i=2$}.\end{cases}\]
In both cases, we then have $|m|=4^s$ for $s\in\{0,1\}$ by Corollary~\ref{fund disc}, whence the claim.
\end{proof}

Finally we consider the reducible forms $F$. Remarkably $Q_F$ being maximal severely restricts the possibilities of $F$ in this case. Let us recall some terminology from \cite{TX}. By \cite[Proposition 3.11]{TX}, a reducible $F\in V_{\J^{(i)}}(\bZ)$ has two possible types, defined as follows. Recall the notation in (\ref{MJ def}) and (\ref{quadratic action}). Then, either
\[ F = m\cdot \varphi \varphi_{M_{\J^{(i)}}}  \mbox{ for some $m\in\bQ^\times$ and an integral binary quadratic form $\varphi$}, \]
in which case we say that $F$ is of \emph{type 1}, or
\[ F =\varphi\psi \mbox{ for integral binary quadratic forms $\varphi,\psi$ with $\varphi_{M_{\J^{(i)}}}=-\varphi$ and $\psi_{M_{\J^{(i)}}} = -\psi$},
\]
in which case we say that $F$ is of \emph{type 2}. We then have the following:

\begin{lemma}\label{reducible prop}For $i=1,2$, let $F\in V_{\J^{(i)}}(\bZ)$ be as in (\ref{F in J}) and reducible of non-zero discriminant.
\begin{enumerate}[(a)]
\item If $F$ is of type 1, then there exist $a,b,c,m\in\bZ$ such that $m$ divides $A,B,C$ and
\[ \begin{cases}(A,C, B^2-4AC) = (ma^2,mc^2, m^2b^2(b^2-4ac))&\mbox{for $i=1$}, \\
(4A\pm 2B+C, B^2-4AC) = (m(a\pm b+c)^2,m^2(a-c)^2(b^2-4ac))&\mbox{for $i=2$}. \end{cases}\]
\item If $F$ is of type 2, then $B^2-4AC$ is a square.
\end{enumerate}
In particular, in the case that $F\in V_{\J^{(i)}}^{\mmax}(\bZ)$, necessarily $F$ is of type $2$ and $B^2-4AC=1$.
\end{lemma}

\begin{proof}If $F$ is of type 1, then we easily see that
\[F(x,y) =\begin{cases} m(ax^2 + bxy + cy^2)(ax^2 - bxy + cy^2) & \mbox{for $i=1$},\\ 
m(ax^2 + bxy + cy^2)(cx^2 + bxy + ay^2) & \mbox{for $i=2$},
\end{cases}\]
where $m\in\bQ^\times$ and $a,b,c\in\bZ$. Changing $a,b,c\in\bZ$ if necessary, we may take $m\in\bZ$, and the claim then follows from a direct calculation. If $F$ is of type 2, then we have
\[F(x,y) =\begin{cases} 
(ax^2 + by^2)(cx^2 + dy^2) & \mbox{for $i=1$},\\
(ax^2 + bxy + ay^2)(cx^2 + dxy + cy^2) & \mbox{for $i=2$},
\end{cases}\]
where $a,b,c,d\in\bZ$, and we compute that $B^2-4AC = (ad-bc)^2$, whence the claim.\\

Suppose now that $F$ is maximal and for contradiction that it is of type 1. In the notation of part (a) above, from Theorems~\ref{J1 thm} and~\ref{J2 thm}, as well as Corollary~\ref{fund disc}, we see that
\[m=\pm1\AND\begin{cases}
 a^2 = c^2 = b^2 = 1 &\mbox{for $i=1$},\\
(a-b+c)^2 = (a+b+c)^2 = (a-c)^2 = 1 &\mbox{for $i=2$}.
\end{cases}\]
For $i=1,2$, respectively, by solving for $a,b,c\in\bZ$, we further deduce that
\[ (A,C,B) \in \{ \pm (1,1,1), \pm (1,1,-1), \pm (1,1,3), \pm (1,1,-3)\}\AND B^2-4AC=0.\]
But then either $F$ is irreducible or $F$ violates the criteria in Theorems~\ref{J1 thm} and~\ref{J2 thm}. It follows that $F$ is of type 2, and $B^2-4AC=1$ by (b) and Corollary~\ref{fund disc}.
\end{proof}

We shall now apply the above lemmas to show that the forms $F\in V_{\J^{(i)}}^{\mmax}(\bZ)(X)$ for which $|\Delta(F)|$ is a square or $\Gal(F)$ is non-dihedral are negligible.

\begin{proposition}\label{error theorem}For $i=1,2$, we have 
\begin{enumerate}[(a)]
\item $\#\{F\in V_{\J^{(i)}}^{\mmax}(\bZ)(X): |\Delta(F)|\mbox{ is a square}\} = O\left(X^{2/3}\right)$,
\item $\#\{F\in V_{\J^{(i)}}^{\mmax}(\bZ)(X): |\C_{\J^{(i)}}(F)|\mbox{ is a square}\} = O\left(X^{1/2}(\log X)^2\right)$,
\item $\#\{F\in V_{\J^{(i)}}^{\mmax}(\bZ)(X): F\mbox{ is reducible}\} = O\left(X^{1/2}(\log X)^2\right)$.
\end{enumerate}
\end{proposition}

\begin{proof}[Proof of (a)] By Lemma~\ref{square prop} and (\ref{Ci}), we have
\begin{align*}\#\{F\in V_{\J^{(1)}}^{\mmax}(\bZ)(X) : |\Delta(F)| = \square \}&\leq \#\{(A,B)\in\bZ^2: 0<|16A^2(B^2 - 4A^2)| < X\}\\
&\hspace{1cm}+\#\{(A,B)\in\bZ^2: 0<|16A^2(B^2 + 4A^2)| < X\},\\
\#\{F\in V_{\J^{(2)}}^{\mmax}(\bZ)(X) : |\Delta(F)| = \square \}&\leq\#\{(A,B)\in\bZ^2: 0<|4B^2(B^2 + 16A^2)| < X\}\\
&\hspace{1cm}+\#\{(A,C)\in\bZ^2: 0<|4AC(4A+C)^2| < X\}.\end{align*}
Now, for any integral binary cubic form $\xi(x,y)$ of non-zero discriminant,  we have
\begin{equation}\label{Mahler} \#\{(x,y)\in\bZ^2 : 0<|\xi(x,y)|<X\} = A_\xi X^{2/3} + O_\xi\left(X^{1/2}\right)\end{equation}
by a result of Mahler \cite{Mah}, where $A_\xi$ denotes the area of the region in $\bR^2$ given by $|\xi(x,y)|\leq 1$. The claim is now clear. \\
\end{proof}

\begin{proof}[Proof of (b)]From Lemma~\ref{C4 prop} and (\ref{Ci N12}), we easily see that
\begin{align*}
\#\{F\in V_{\J^{(1)}}^{\mmax}(\bZ)(X) : |\C_{\J^{(1)}}(F)|=\square\} & \ll \#\{(D,n_1,n_2)\in\bZ^3: 0<(Dn_1n_2)^2<X\},\\
\#\{F\in V_{\J^{(2)}}^{\mmax}(\bZ)(X) : |\C_{\J^{(2)}}(F)|=\square\} & \ll \#\{(D,n_1,n_2)\in\bZ^3: \\ & \hspace{1.55cm}0< (D(n_1-n_2)(n_1+n_2))^2<4X\}.
\end{align*}
They amount to counting the sum of the number of ways to write $N$ as a product of three integers, for $N$ ranging up to $X^{1/2}$. We then see that the claim holds.
\end{proof}

\begin{proof}[Proof of (c)]From Lemma~\ref{reducible prop} and (\ref{Ci}), we deduce that
\begin{align*}
\#\{F\in V_{\J^{(1)}}^{\mmax}(\bZ)(X):F\mbox{ is reducible}\}& \leq2\cdot\# \{(A,C)\in\bZ^2: 0< |16AC| < X\\ &\hspace{4.55cm}\AND 1+4AC=\square\},\\
\#\{F\in V_{\J^{(2)}}^{\mmax}(\bZ)(X):F\mbox{ is reducible}\}&\leq2\cdot\# \{(A,C)\in\bZ^2: 0< |(4A-C)^2-4| < X\}.
\end{align*}
Notice that $(1+4AC)^{1/2}\ll X^{1/2}$ for $(A,C)\in\bZ^2$ in the first set, while $4A-C\ll X^{1/2}$ for $(A,C)\in \bZ^2$ in the second set. Thus, the two numbers above on the right are bounded by
\[ \sum_{\substack{0<N\ll X^{1/2} \\ N\text{ is odd}}}  O\left(d\left(\frac{N^2-1}{4}\right)\right)\AND \sum_{0<N\ll X^{1/2}} O\left(d(N^2-4)\right),\]
respectively, where $d(\cdot)$ is the divisor function. The claim now follows from \cite[Theorem 2]{Lap}.
\end{proof}

\section{Counting binary quartic forms and elliptic curves by discriminant}
\label{disc section}

In this section, we shall prove Theorems \ref{V4 thm}, \ref{D4 thm}, and \ref{EC thm}, which involve counting integral binary quartic forms or elliptic curves by their discriminant.

\subsection{Proof of Theorem \ref{V4 thm}}

By Corollary \ref{V4 forms}, a form $ F\in V_{\J^{(1)}}(\bZ)\cup V_{\J^{(2)}}(\bZ)\cup V_{\J^{(3)}}(\bZ)$ with $\Delta(F)$ a square and $Q_F$ maximal is necessarily of the shape $ax^4 + bx^2y^2 + ay^4$. Note that such a form has discriminant $16a^2(b^2-4a^2)^2$. We make two observations:

\begin{lemma}\label{V4 lemma}Let $F(x,y) = ax^4 + bx^2y^2 + ay^4$ be an integral form of non-zero discriminant such that $Q_F$ is maximal. Then:
\begin{enumerate}[(a)]
\item $F$ is reducible if and only if $(a,b)=(0,\pm1)$.
\item $F$ is $\GL_2(\bZ)$-inequivalent to every other form in $V_{\J^{(1)}}(\bZ)\cup V_{\J^{(2)}}(\bZ)\cup V_{\J^{(3)}}(\bZ)$.
\end{enumerate}
\end{lemma}
\begin{proof}By Lemma \ref{reducible prop}, the form $F$ is reducible only if $b^2 - 4a^2 = 1$, so (a) is clear. Suppose next that $T\in \GL_2(\bZ)$ and $F_T\in V_{\J^{(i_0)}}(\bZ)$ for some $i_0=1,2,3$. This means that $TM_{\J^{(i_0)}}T^{-1}$ also fixes $F$. But $F\in V_{\J^{(1)}}(\bZ)\cup V_{\J^{(2)}}(\bZ) \cup V_{\J^{(3)}}(\bZ)$ by (\ref{F in J}), so from \cite[Theorem 1.1]{TX} we see that
\[ TM_{\J^{(i_0)}}T^{-1} = \pm M_{\J^{(i)}} \mbox{ for some }i=1,2,3,\mbox{ and in fact }i=i_0\]
because $\J^{(1)},\J^{(2)},\J^{(3)}$ have distinct discriminants. A direct computation then shows that $\pm T$ must be one of the matrices in (\ref{T choices}). It follows that $F_T = F$ and so (b) holds.
\end{proof}

Lemma \ref{V4 lemma} implies that
\begin{align*} N_{V_4}'(X) &= \#\{(a,b)\in \bZ^2\setminus\{(0,\pm 1)\}: ax^4 + bx^2y^2 + ay^4\mbox{ is maximal},\\
&\hspace{5.5cm}\mbox{and }0<|a(b-2a)(b+2a)|<X^{1/2}/4\}.\end{align*}
Now, for any odd prime $p$, it is easy to see from Theorem \ref{J1 thm} that
\[ ax^4 + bx^2y^2 + ay^4\mbox{ is maximal at $p$ if and only if }a(b-2a)(b+2a)\not\equiv0\ppmod{p^2}.\]
We may then estimate $N_{V_4}'(X)$ using the following result of Stewart and the second author in \cite[Theorem 1.2]{SX} which is analogous to (\ref{Mahler}):

\begin{proposition}\label{SX prop}Let $\xi(x,y)$ be an integral binary cubic form of non-zero discriminant, and put
\[ N_\xi'(X) =\#\{(x,y)\in\bZ^2: \xi(x,y)\mbox{ is square-free and }0<|\xi(x,y)|<X\}.\]
Suppose also that $\xi(x,y)$ is completely reducible over $\bQ$. Then, we have
\[ N_\xi'(X) = A_\xi \prod_p\left(1-\frac{\rho_\xi(p)}{p^4}\right) X^{2/3} + O_\xi\left(X^{2/3}(\log X\log\log X)^{-1}\right),\]
where $A_\xi$ denotes the area of the region in $\bR^2$ defined by $|\xi(x,y)|\leq 1$, and
\[\rho_\xi(p)  = \#\{(x,y)\in (\bZ/p^2\bZ)^2: \xi(x,y)\equiv0\ppmod{p^2}\}\]
for each prime $p$.
\end{proposition}

It is well-known that
\[A_\xi = \frac{1}{\Delta(\xi)^{1/6}}\frac{3\Gamma(1/3)^2}{\Gamma(2/3)}\mbox{ when }\Delta(\xi)>0.\]
Further, the proof of Proposition~\ref{SX prop} given in \cite{SX} is still valid when the condition that $\xi(x,y)$ is square-free at $p$, namely $\xi(x,y)\not\equiv0$ (mod $p^2$), is replaced by another condition mod $p^2$ at finitely many primes $p$. Let us apply Proposition \ref{SX prop} to the form
\[\xi_0(a,b) = a(b-2a)(b+2a),\mbox{ where }\Delta(\xi_0)=2^4,\]
but with the condition at $2$ replaced by that $ax^4 + bx^2y^2 + ay^4$ is maximal at $2$. We then obtain
\[ N_{V_4}'(X) = \frac{3\Gamma(1/3)^2}{2^{2/3}\Gamma(2/3)}\prod_p\left(1-\frac{\rho_{\xi_0}(p)}{p^4}\right)\left(\frac{X^{1/2}}{4}\right)^{2/3} + O\left(X^{1/3}(\log X\log\log X)^{-1}\right),\]
where the values of $\rho_{\xi_0}(p) = \rho_0(p)$ are given in Proposition~\ref{rho eva}. The theorem now follows.

\subsection{Proof of Theorem~\ref{D4 thm}} 


We first provide the upper bound. Note that it suffices to prove, for each $i=1,2,3$, an upper bound for the set
\[V_{\J^{(i)}}(\bZ)'(X)  = \{F \in V_{\J^{(i)}}(\bZ) : 0 < |\Delta(F)| < X \}. \]
Analogous to the conductor case, by (\ref{Ci xy formula}) this problem reduces to counting integral points in a $2$-dimensional region, but this time we consider
\[ S(X) = \{(x,y)\in \bR^2 : |y|,|x^2-y|\geq 1,\, |y(x^2-y)^2| < X\}.\]
Each point $(x,y)$ is weighted by the divisor function $d(y)$ for $i=1,2$, and the function $d_\square(y)$ in (\ref{kappa}) for $i=3$. In particular, we have
\begin{align*} \#V_{\J^{(1)}}(\bZ)'(X),\#V_{\J^{(2)}}(\bZ)'(X) &\ll \sum_{(x,y)\in S(X)\cap \bZ^2} d(y)\\ \#V_{\J^{(3)}}(\bZ)'(X)&\ll\sum_{(x,y)\in S(X)\cap \bZ^2}d_\square(y)\end{align*}
similar to Lemma \ref{RX bijections}.

\begin{remark}\label{disc remark}
Let us briefly discuss why the analogue of Lemma \ref{tail cut} cannot be expected to hold for $S(X)$. In the proof of Lemma \ref{tail cut} the crucial observation is that the length of the intervals given by Lemma \ref{bound on y} are $O(X/x^2)$, and by Lemma \ref{x bd}, one sees that this is bounded away from zero. The analogous intervals in the discriminant case only have length $O(X/x^4)$, and thus for each $x \gg X^{1/4}$ there are at most one $y$ such that $(x,y) \in S(X)$. Therefore we cannot hope to average the divisor function as in the proof of Lemma \ref{tail cut} using (\ref{dr growth}). \end{remark}

As a result we are only able to use the absolute upper bound for the divisor function, which is
\[ d(y) = y^{1/\log\log y}.\]
This bound holds for $d_\square(y)$ as well, so we may apply this deal with all $i = 1,2,3$, and obtain
\begin{equation}\label{D4 disc bound} \#V_{\J^{(i)}}(\bZ)'(X) \ll \#(S(X)\cap \bZ^2)\cdot X^{1/\log\log X}. \end{equation}
The number of integral points in $S(X)$ may be estimated using Davenport's lemma, as follows. 

\begin{proposition} \label{premax disc} We have
\[ \#(S(X)\cap \bZ^2) = \Area(S_0(X)) + O \left(X^{1/2} \right),\]
where $S_0(X)$ is the region 
\[ S_0(X) = \{(x,y) \in S(X) : |x| \leq X^{1/4} \}.\]
\end{proposition} 

\begin{proof} For any $(x,y)\in S(X)\cap \bZ^2$, the same proof in Lemma \ref{x bd} shows that $|x| \ll X^{1/2}$. For $|x|\in (X^{1/4},X^{1/2}]$, as noted in Remark \ref{disc remark} there are at most $O(1)$ possibilities for $y$, so the total contribution from this range is $O \left(X^{1/2}\right)$. Thus, it remains to consider integral points in $S_0(X)$. The claim now follows from Proposition \ref{Davenport} because for any $(x,y)\in S_0(X)$, we have
\[ |x| = O\left(X^{1/4}\right) \AND |y| = O\left(X^{1/2}\right)\]
for the $1$-dimensional projections. To see why the latter hound holds, suppose that $|y|\gg X^{1/2}$. Then for the inequality $|y(x^2 - y)^2| < X$ to hold we must have $|x^2 - y| \ll X^{1/4}$, which implies that $y = x^2 + O (X^{1/4})$. But $x^2 \ll X^{1/2}$, whence $|y| \ll X^{1/2}$ and this is a contradiction.
\end{proof}

To compute the area of $S_0(X)$, we shall require the following lemma, which is analogous to Lemma \ref{bound on y}. As one can easily verify by a direct computation, we have:

\begin{lemma}\label{bound on y'}Let $(x,y)\in \bR^2$. 
\begin{enumerate}[(a)]
\item For $|y|\geq X^{1/3}$, the condition $|y(x^2-y)^2|<X$ is equivalent to
\[ y>0 \AND \sqrt{y-\frac{X^{1/2}}{y^{1/2}}} < |x| < \sqrt{y+\frac{X^{1/2}}{y^{1/2}}}.\]
\item For $|y| < X^{1/3}$ and $y\neq0$, the condition $|y(x^2-y)^2|<X$ is equivalent to
\[ 0\leq |x| < \sqrt{y+\frac{X^{1/2}}{|y|^{1/2}}}.\]
\end{enumerate}
\end{lemma}

\begin{lemma} \label{S0 area} We have
\[\Area(S_0(X)) =O \left(X^{1/2} \log X\right). \]
\end{lemma}

\begin{proof}As shown in the proof of Proposition \ref{premax disc}, we have $|y| \ll X^{1/2}$ for all $(x,y)\in S_0(X)$. Lemma \ref{bound on y'} then implies
\[ \Area(S_0(X)) \ll \int_{1\leq |y|< X^{1/3}}\sqrt{y+\frac{X^{1/2}}{|y|^{1/2}}}dy + \int_{X^{1/3}}^{X^{1/2}}\left(\sqrt{y+\frac{X^{1/2}}{y^{1/2}}}-\sqrt{y-\frac{X^{1/2}}{y^{1/2}}}\right)dy.\]
For $|y|<X^{1/3}$, note that
\[\sqrt{y+\frac{X^{1/2}}{|y|^{1/2}}} - \sqrt{\frac{X^{1/2}}{|y|^{1/2}}} = \frac{y}{\sqrt{y+X^{1/2}/|y|^{1/2}} + \sqrt{X^{1/2}/|y|^{1/2}}} = O \left(\frac{y^{5/4}}{X^{1/4}} \right). \]
Hence, we have
\[\int_{1\leq |y|< X^{1/3}}\sqrt{y+\frac{X^{1/2}}{|y|^{1/2}}}dy = \int_{1\leq |y|<X^{1/3}}\left(\frac{X^{1/4}}{|y|^{1/4}}+O\left(\frac{y^{5/4}}{X^{1/4}}\right)\right) = O\left(X^{1/2}\right).\]
For $y\geq X^{1/3}$, similarly note that
\[\sqrt{y + \frac{X^{1/2}}{y^{1/2}}} - \sqrt{y - \frac{X^{1/2}}{y^{1/2}}} = \frac{2X^{1/2}/y^{1/2}}{\sqrt{y + X^{1/2}/y^{1/2}} + \sqrt{y - X^{1/2}/y^{1/2}}} = O \left(\frac{X^{1/2}}{y}\right).\]
This implies that
\[\int_{X^{1/3}}^{X^{1/2}} \left(\sqrt{y + \frac{X^{1/2}}{y^{1/2}}} - \sqrt{y - \frac{X^{1/2}}{y^{1/2}}}\right)dy = O \left(X^{1/2} \log X \right).\]
The claim now follows.
\end{proof}

Let us remark that we shall compute the area of a similar region in Subsection \ref{EC pf} in order to prove Theorem \ref{EC thm}, with the difference being that the coefficient of the main term $X^{1/2} \log X$ can be computed exactly. \\

From (\ref{D4 disc bound}), Proposition \ref{premax disc}, and Lemma \ref{S0 area}, we now deduce that
\[ N_{D_4}'(X) \ll \left(\Area(S_0(X)) + O\left(X^{1/2}\right)\right)\cdot X^{1/\log\log X} \ll X^{1/2+1/\log\log X}\log X,\]
which is the desired upper bound.\\

We now move on to prove the lower bound. Note that it suffices to consider one of the three families, and we shall restrict our attention to the forms in
\[V_{\J^{(1)}}^{\mmax}(\bZ)'(X) = \{F \in V_{\J^{(1)}}^{\mmax}(\bZ): 0<|\Delta(F)|<X\}.\]
Notice that for the irreducible $F$, those with $\Gal(F)\simeq V_4$ only have $O(X^{1/3})$ contribution by Theorem \ref{V4 thm}, while those with $\Gal(F)\simeq C_4$ have $O(X^{1/2}\log X)$ contribution by Lemma \ref{C4 vs D4} and the next estimate analogous to Proposition \ref{error theorem} (b).

\begin{proposition}\label{C4 disc} We have
\[ \#\{F\in V_{\J^{(1)}}^{\mmax}(\bZ)'(X): |\C_{\J^{(1)}}(F)|=\square \} = O\left(X^{1/2}\log X\right).\]
\end{proposition}
\begin{proof}As in the proof of Proposition \ref{error theorem} (b), from Lemma \ref{C4 prop} and (\ref{Ci}), we see that
\begin{align*}
\#\{F\in V_{\J^{(1)}}^{\mmax}(\bZ)'(X) : |\C_{\J^{(1)}}(F)|=\square\} & \ll \#\{(D,n_1,n_2)\in\bZ^3: 0<D^2|n_1n_2|^3<X\}.
\end{align*}
The right hand side is in turn bounded by
\[\sum_{1 \leq D \leq X^{1/2}} \sum_{1 \leq N < 3X^{1/3}/D^{2/3}} O\left(d(N) \right) = O \left(X^{1/2} \log X\right),\]
where $d(\cdot)$ is the divisor function.
\end{proof}

Instead of the entire $V_{\J^{(1)}}^{\mmax}(\bZ)$, we shall in fact only look at those forms $F$ whose coefficients satisfy specific conditions. Note that Theorem \ref{J1 thm} and Lemma \ref{reducible prop} immediately imply:

\begin{lemma}\label{lower bound lem}
Let $F\in V_{\J^{(1)}}(\bZ)$ be as in (\ref{F in J}). 
\begin{enumerate}[(a)]
    \item If both $AC$ and $B^2-4AC$ are square-free, then $F$ is maximal at all odd primes $p$.
    \item If $F$ is maximal and $B^2-4AC\neq 1$, then $F$ is irreducible.
\end{enumerate}
\end{lemma}

Insisting that $F$ is also maximal at the prime $2$ only changes the asymptotic count by some constant factor. Moreover, the $\GL_2(\bZ)$-equivalence class of any $F\in V_{\J^{(1)}}(\bZ)$ contains at most two distinct forms in $V_{\J^{(1)}}(\bZ)$ by Lemma~\ref{representative cor}. Recalling (\ref{Ci}), from Proposition \ref{C4 disc} and Lemma \ref{lower bound lem}, we then deduce that 
\begin{align*} N_{D_4}'(4X) + O\left(X^{1/2}\log X\right)&\gg \#\{(a,b,c)\in\bZ^3: b^2-4ac\neq1,\mbox{ both }ac\mbox{ and }b^2-4ac\\&\hspace{3cm}\mbox{are square-free, }0<|4ac(b^2-4ac)^2|<X\},\end{align*}
where $4X$ is used only for convenience.  Since we are looking for a lower bound, it is permissible to assume further that
\[ 4ac \geq X^{1/3} \AND ac \ll_\delta X^{1/3 + \delta}\mbox{ for some }\delta >0.\] 
By Lemma~\ref{bound on y'}, the condition $|4ac(b^2-4ac)^2|<X$ is equivalent to $|b|\in J_{4ac}$, where
\[J_y = \left(\sqrt{y - \frac{X^{1/2}}{y^{1/2}}}, \sqrt{y + \frac{X^{1/2}}{y^{1/2}}} \text{ } \right)\mbox{ for }y\geq X^{1/3}.\]
Setting $y=4ac$, we then deduce that
\begin{equation}\label{lower bound} N_{D_4}'(4X) + O\left(X^{1/2}\log X\right)\gg \sum_{\substack{{X^{1/3}<y<X^{1/3+\delta}}\\y\in 4\bZ,y+1\neq\square}} \mu(y/4)^2d(y/4) \sum_{x\in J_y} \mu(x^2 -y)^2,\end{equation}
where $\delta>0$ is to be chosen and $\mu(\cdot)$ denotes the M\"{o}bius function. We shall estimate this double sum using the following result due to Friedlander and Iwaniec \cite{FI}.

\begin{proposition}\label{FI estimate}For $y>0$, there exists a constant $c_y>0$ such that
\[\sum_{x<Y}\mu(x^2 - y)^2 = c_yY + O(d(y)(y+Y^2)^{1/3}\log(2yY)).\]
Moreover, there exists a constant $c_0>0$ such that $c_y\geq c_0$ for all $y>0$.
\end{proposition}
\begin{proof}See \cite[Theorem 2.1]{FI}; the claim there is stated for $x^2+y$, but essentially the same  argument yields the above asymptotic formula.
\end{proof}

Notice that $X^{1/2}/y^{1/2} < y$ when $y>X^{1/3}$. From Proposition~\ref{FI estimate}, we then deduce that the double sum on the right hand side in (\ref{lower bound}) is equal to
\begin{equation}\label{sum} \sum_{\substack{{X^{1/3}<y<X^{1/3+\delta}}\\y\in 4\bZ,y+1\neq\square}}\mu(y/4)^2d(y/4)d(y)c_y|J_y| + O\left(\sum_{X^{1/3}<y<X^{1/3+\delta}}d(y)^2y^{1/3}\log X\right),\end{equation}
where $|J_y|$ denotes the length of $J_y$. Since $|J_y| = O\left(X^{1/2}/y\right)$,
we have
\[\sum_{\substack{{X^{1/3}<y<X^{1/3+\delta}}\\y\in 4\bZ,y+1\neq\square}}\mu(y/4)^2d(y/4)c_y|J_y| \gg X^{1/2}\sum_{\substack{{X^{1/3}<y<X^{1/3+\delta}}\\y\in 4\bZ,y+1\neq\square}}\frac{\mu(y/4)^2d(y/4)}{y}\gg X^{1/2}(\log X)^2,\]
where the latter may be obtained using partial summation. Since $d(y)^2\log X = O_\ep(X^\ep)$, by choosing $\delta=1/27$ and $\ep = 1/162$ say, we see that the second sum in (\ref{sum}) is $O(X^{1/2})$ and hence is an error term. This completes the proof of the theorem.


\subsection{Proof of Theorem \ref{EC thm}} 
\label{EC pf} 

By definition, we have
\[ N_E'(X) = \{(a,b)\in\bZ^2 : p^2\mid a \mbox{ implies $p^4\nmid b$ for all primes $p$, and $0<|b^2(a^2-4b)|<X$}\}.\]
Let us first count the number of integral points in
\[ \S'(X) = \{(x,y)\in\bR^2 :|b|,|a^2-4b|\geq 1,\, |b^2(a^2-4b)|<X\}.\]
We may do so using Davenport's lemma. Indeed, for any $(a,b)\in S'(X)$, we have
\begin{equation}\label{ab bound}|b| = O\left(X^{1/2}\right)\AND |a| = O\left(X^{1/2}\right),\end{equation}
where the latter follows from a similar proof as in Lemma \ref{x bd}. Proposition \ref{Davenport} then yields
\[ \#(S'(X)\cap \bZ^2) = \Area(S'(X)) + O\left(X^{1/2}\right) = \frac{2}{3}X^{1/2}\log X,\] 
where the area of $S'(X)$ is computed in the next lemma:

\begin{lemma}\label{S' area}We have
\[ \Area(S'(X)) = \frac{2}{3}X^{1/2}\log X + O\left(X^{1/2}\right). \]
\end{lemma}
\begin{proof}An analogous statement as Lemma \ref{bound on y'} is true for the inequality $|b^2(a^2-4b)|<X$. In particular, it is not hard to see that
\begin{align}\label{S' int} \Area(S'(X)) & = \int_{1\leq |b|< (X/4)^{1/3}}\sqrt{4b + \frac{X}{b^2}}db \\\notag
&\hspace{2cm}+ \int_{(X/4)^{1/3}\leq b<X^{1/2}}\left(\sqrt{4b+\frac{X}{b^2}} - \sqrt{4b-\frac{X}{b^2}}\right)db.\end{align}
Now, by a similar calculation as in the proof of Lemma \ref{S0 area}, the first integral in (\ref{S' int}) is equal to
\[ \int_{1\leq |b| <(X/4)^{1/3}}\left(\frac{X^{1/2}}{|b|} + O\left(\frac{b^2}{X^{1/2}}\right)\right)db = \frac{2}{3}X^{1/2}\log(X) + O\left(X^{1/2}\right),\]
while the second integral in (\ref{S' int}) is bounded by
\[\int_{(X/4)^{1/3}\leq b<X^{1/2}}O\left(\frac{X}{b^{5/2}}\right)db = O\left(X^{1/2}\right).\]
Combining them together then yields the claim.
\end{proof}

Let us now apply a sieve to get the exact count for $N_E'(X)$. For $m\in\bN$, define
\[ M_E(m;X) = \#\{(a,b)\in\bZ^2: m^2\mid a,\,m^4\mid b,\, 0<|b^2(a^2-4b)|<X\}.\]
By (\ref{ab bound}) and (\ref{int cong}), we have
\begin{align*} M_E(m;X) & = 
\frac{1}{m^6}\cdot\#(S'(X)\cap\bZ^2) + O\left(\frac{X^{1/2}}{m^2}+\frac{X^{1/2}}{m^4}+1\right)\\
& = \frac{1}{m^6}\cdot\frac{2}{3}X^{1/2}\log X + O\left(\frac{X^{1/2}}{m^2}+1\right),
\end{align*}
where the second equality follows from \ref{S' area}. By the inclusion-exclusion principle, we have
\[\label{EC sieve} N_E'(X) = \sum_{\substack{m\in\bN\\p\mid m\Rightarrow p\leq (\log X)^{1/5}}} \mu(m) M_E(m;X) + O \left(\sum_{p > (\log X)^{1/5}} M_E(p;X) \right),\]
where $\mu(\cdot)$ is the M\"{o}bius function. Note that (\ref{ab bound}) implies $M_E(m;X)$ is zero for all $m > X^{1/8}$. On the one hand, we have
\[ \sum_{(\log X)^{1/5}<p<X^{1/8}}M_E(p;X) = O\left( \sum_{(\log X)^{1/5}<p<X^{1/8}}\left(\frac{X^{1/2}\log X}{p^6}+\frac{X^{1/2}}{p^2}\right)\right) = O\left(X^{1/2}\right).\]
On the other hand, we have
\[ \sum_{\substack{1\leq m <X^{1/8} \\ p \mid m \Rightarrow p < (\log X)^{1/5}}} \mu(m)M_E(m;X) = \frac{2}{3}X^{1/2}\log X\sum_{\substack{1\leq m <X^{1/8} \\ p \mid m \Rightarrow p < (\log X)^{1/5}}}\frac{\mu(m)}{m^6} + O\left(X^{1/2}\right).\]
It is standard to show that
\[ \sum_{\substack{1\leq m < X^{1/2}\\ p\mid m\Rightarrow p < (\log X)^{1/5}}}\frac{\mu(m)}{m^6} = \prod_{p}\left(1-\frac{1}{p^6}\right) + O\left( (\log X)^{-1}\right) = \frac{1}{\zeta(6)} + O\left((\log X)^{-1}\right).\]
The theorem now follows. 

\subsection*{Acknowledgments}

Part of this research was done while the first author was visiting the second author at the Mathematical Institute at the University of Oxford. She would like to thank the institute for its hospitality during her stay. The visit was supported by the China Postdoctoral Science Foundation Special Financial Grant (Award No.: 2017T100060). We thank the anonymous referees for providing insightful comments which lead to significant improvements of the paper.

\end{document}